\documentclass[a4paper,11pt]{article}
\usepackage[T1]{fontenc}
\usepackage{lmodern}
\usepackage[a4paper,margin=2.5cm]{geometry}
\usepackage{amsmath,amssymb,amsthm}
\usepackage{booktabs,array}
\usepackage[hidelinks]{hyperref}
\hypersetup{
  pdftitle={Explicit and logarithmically improved chromatic bounds for (P2 union P3)-free graphs},
  pdfauthor={Lizhong Chen}
}
\allowdisplaybreaks[1]
\newtheorem{theorem}{Theorem}[section]
\newtheorem{lemma}[theorem]{Lemma}
\newtheorem{proposition}[theorem]{Proposition}
\newtheorem{corollary}[theorem]{Corollary}
\theoremstyle{definition}

\newcommand{\Cclass}{\mathcal C}
\newcommand{\Ueight}{U_8}
\newcommand{\Bexp}{B_{\mathrm{exp}}}

\newcommand{\normone}[1]{\left\lVert #1\right\rVert_1}

\title{Explicit and logarithmically improved chromatic bounds\\
for \((P_2\cup P_3)\)-free graphs}
\author{Lizhong Chen\\[3pt]
\small Department of Mathematics, Hong Kong University of Science and Technology\\
\small Clear Water Bay, Hong Kong\\
\small \href{mailto:lchendh@connect.ust.hk}{lchendh@connect.ust.hk}}
\date{8 September 2026}

\begin{document}
\maketitle

\begin{abstract}
We prove that every \((P_2\cup P_3)\)-free graph \(G\), with
\(k=\omega(G)\), satisfies
\[
  \chi(G)=O\!\left(k^3\frac{\ln\ln k}{\ln k}\right).
\]
Thus the class admits an \(o(k^3)\) binding function.
We also prove the explicit bound
\[
  \chi(G)\le \frac{k^3}{8}+\frac{5k^2}{4}-4k+8
  \qquad(k\ge8),
\]
with a sharper formula for odd \(k\). A further refinement gives an
explicit cubic bound with leading coefficient \(104/837<1/8\).
\end{abstract}

\medskip
\noindent\textbf{Mathematics Subject Classification:} 05C15, 05C75.

\smallskip
\noindent\textbf{Keywords:} Graph colouring; forbidden induced subgraphs;
chromatic number; clique number; independent sets; entropy.

\section{Introduction}
\label{sec:introduction}

All graphs in this paper are finite, simple and undirected. We write
\(\chi(G)\), \(\omega(G)\) and \(\alpha(G)\) for the chromatic number,
clique number and independence number of a graph \(G\), respectively.
A graph is \(H\)-free if it contains no induced subgraph isomorphic to
\(H\). The path \(P_t\) has \(t\) vertices, and \(P_2\cup P_3\)
consists of an edge and a three-vertex path with no edges between them.
Let \(\Cclass\) be the class of \((P_2\cup P_3)\)-free graphs.

A class of graphs is \(\chi\)-bounded if the chromatic number of every
graph in the class is bounded by a function of its clique number.
Scott and Seymour~\cite{SS2020} survey this theory and its connections
with forbidden induced subgraphs. The ordered maximum-clique approach
used below originates in Wagon's work on graphs with an excluded
induced matching~\cite{Wagon1980}.
For the class \(\Cclass\), Bharathi and Choudum~\cite{BC2018} proved
\begin{equation}\label{eq:classical-bound}
  \chi(G)\le \binom{\omega(G)+2}{3}.
\end{equation}
Their argument assigns vertices to cells indexed by pairs of vertices
of a maximum clique. Each cell is a disjoint union of cliques, and
colouring the cells separately gives the cubic bound. We retain this
decomposition, but colour several cells together.

Stronger bounds have also been studied for subclasses defined by
excluding further induced subgraphs. Prashant, Francis and Francis
Raj~\cite{PFF2024} obtain linear binding functions for several such
subclasses, including families in which a join with a fixed clique
is excluded. Char and Karthick~\cite{CK2025} determine the optimal
binding function when the gem is excluded as well. The gem consists
of a four-vertex path and one vertex adjacent to all four path vertices.
Our estimates apply without an additional forbidden induced subgraph.

Our main result gives a logarithmic saving over cubic growth.

\begin{theorem}\label{thm:logarithmic}
There are absolute constants \(C>0\) and \(k_0\) such that every
\(G\in\Cclass\) with \(k=\omega(G)\ge k_0\) satisfies
\[
  \chi(G)\le Ck^3\frac{\ln\ln k}{\ln k}.
\]
\end{theorem}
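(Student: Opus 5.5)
We plan to refine the Bharathi–Choudum decomposition. Fix a maximum clique \(Q=\{q_1,\dots,q_k\}\). Every vertex outside \(Q\) has a non-neighbour in \(Q\), since otherwise \(Q\) would not be maximum. Following Wagon's ordered approach, assign each vertex \(v\notin Q\) to a cell \(A_{ij}\) with \(i\le j\), where \(i\) is the least index with \(vq_i\notin E\) and \(j\) is determined by the second non-adjacency (or a special index when there is none). The standard argument shows that each cell induces a \(P_3\)-free graph, that is, a disjoint union of cliques. It also shows that an edge between two cells forces strong restrictions coming from the excluded \(P_2\cup P_3\); for instance, an edge \(q_aq_b\) together with a \(P_3\) through the cell \(A_{ij}\) would be forbidden. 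This reproduces \(\chi\le\binom{k+2}{3}\) if each cell receives its own palette of at most \(k\) colours. The saving must therefore come from colouring many cells with a shared palette.

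The key step will be a structural lemma: for a fixed \(i\), the union \(\bigcup_j A_{ij}\) has small independence-type complexity. More precisely, I would show that the cells \(A_{ij}\), for varying \(j\), pairwise interact only in a ``clique-union'' manner. Each component clique of a cell sees the others in a controlled pattern, because a vertex \(q_i\) is nonadjacent to all of them and any \(P_3\) inside the union, together with an edge of \(Q\) avoiding the relevant indices, gives an induced \(P_2\cup P_3\). From this I expect the union over \(j\) of cells for fixed \(i\) to be colourable with about \(k^2\,\ln\ln k/\ln k\) colours instead of \(k^2\). Summing over \(i\) then gives the theorem. To obtain the logarithmic factor, I would model the interaction between components as a hypergraph or a bounded-clique graph in which each vertex has degree about \(k^2\) while cliques have size at most \(k\). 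I would then apply a Johansson/Shearer-type bound \(\chi\le O(\Delta/\ln\Delta)\) for locally sparse graphs, or an entropy/counting argument on independent sets, matching the keyword ``entropy''. The \(\ln\ln k\) in the numerator suggests that the neighbourhoods are not triangle-free but only have clique number \(\le k\). Hence the relevant tool is the Johansson–Molloy bound \(\chi=O(\Delta\ln\omega/\ln\Delta)\) for \(K_{\omega}\)-free graphs, or a Shearer-type independence estimate \(\alpha\ge \Omega\bigl(n\ln d/(d\ln\omega)\bigr)\), iterated greedily. With \(\Delta\approx k^3\) and \(\omega=k\), this gives \(\chi= O\bigl(k^3\ln k/\ln(k^3)\bigr)\), which is only constant-factor, so the structure must be used more carefully. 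Instead, I would apply the bound inside a fixed-\(i\) union, whose auxiliary graph has degree about \(k^2\) and local clique number about \(\ln k\) after grouping. This yields \(k^2\ln\ln k/\ln k\).

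The main obstacle is the middle step. One must identify an auxiliary graph on which the ``independent set'' colouring counts correctly, and prove that its local density is small enough (neighbourhood clique number polylogarithmic, or few edges in neighbourhoods) for a Johansson–Molloy or Shearer bound to give the \(\ln\ln k/\ln k\) factor. I would first try to show that two clique-components from different cells \(A_{ij}\), \(A_{ij'}\) are either anticomplete or nearly complete, using the forbidden \(P_2\cup P_3\) with the edge \(q_jq_{j'}\) absent from both. This would make the contracted component graph have bounded local structure, after which the probabilistic colouring bound, combined with a final greedy recolouring of the residual vertices, completes the estimate for \(k\ge k_0\).
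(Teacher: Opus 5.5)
There is a genuine gap. Your plan leaves open exactly the step that carries the theorem: nothing in it produces the factor $\ln\ln k/\ln k$.

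\textbf{The central step has no mechanism and relies on unavailable tools.} The auxiliary graph ``of degree about $k^2$ and local clique number about $\ln k$ after grouping'' has no construction behind it. The bounds you would apply to it are not known results.
\begin{itemize}
\item Neither $\chi=O(\Delta\ln\omega/\ln\Delta)$ nor $\alpha=\Omega(n\ln d/(d\ln\omega))$ is known for $K_\omega$-free graphs. Already for $\omega=4$ either bound would remove the $\ln\ln$ loss in the Shearer/Johansson bounds for $K_4$-free graphs, which is an open problem.
\item The bounds that are known (e.g.\ Molloy's) lose a factor linear in $\omega$. That is fatal here, since $\omega\approx k$.
\item The graph you want to colour is dense: cells are unions of cliques of order up to $k$, and components in different cells are nearly complete. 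Sparse-graph colouring results therefore do not apply to it directly.
\end{itemize}

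\textbf{Grouping by the first index is also problematic.} Take $j<j'$. The induced path $q_i-q_j-y$ shows that each $y\in A_{ij'}$ misses at most one vertex per component of $A_{ij}$. In the other direction the matching property fails: a vertex of $A_{ij}$ that is nonadjacent to $q_{j'}$ can miss two vertices of one component of $A_{ij'}$ without creating an induced $P_2\cup P_3$. Your suggested argument via the edge $q_jq_{j'}$ does not work, because every vertex of $A_{ij'}$ is adjacent to $q_j$. The paper instead groups cells by the second index (columns). There both directions give matchings (paths $a_j-a_i-y$ and $a_j-a_h-x$), and every column satisfies $n_j+w_j=k+1$.

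\textbf{The missing idea is to work in the complement $L=\overline J$ of a column.} A colouring of $J$ is a clique cover of $L$, $\alpha(L)=\omega(J)\le k$, and the matchings make $L$ sparse. The logarithmic dependence on clique number that you hoped for does hold in $L$, but only because $L$ is $\overline{P_2\cup P_3}$-free. The argument runs as follows.
\begin{itemize}
\item Applying the cubic bound to complements gives $|V(T)|\le\omega(T)(\alpha(T)+2)^3$ for every $\overline{P_2\cup P_3}$-free $T$.
\item By the entropy inequality, a neighbourhood with $Z$ independent sets then has mean independent-set size about $\ln Z/(\ln r+\ln\ln Z)$.
\item Local occupancy then gives $\alpha(L')\ge|V(L')|\ln D/\bigl(48D(\ln(r+1)+\ln\ln D)\bigr)$ whenever $\omega(L')\le r$.
\end{itemize}

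Two further ingredients are needed to use this estimate.
\begin{itemize}
\item \emph{Control of cell orders.} A cell may have unboundedly many components, so degrees in $L$ are a priori unbounded. A Caro--Wei rank-profile argument lets one discard fewer than $2B$ cells, and all components of rank at least $B$, at cost $O(K^2/B+BK)$.
\item \emph{A multiscale clique partition of $L$.} Remove cliques of orders $R,R/2,\ldots,4$ in turn. While the current graph is $K_r$-free, each cell keeps at most $r-1$ components, which bounds the average degree. The occupancy bound then caps the order of the current graph at roughly $K^2\log r\,(\ln r+\ln\ln K)/\ln K$. The dyadic sum of these orders divided by $r$ is $O(K^2\ln\ln K/\ln K)$ per column, where $K=k+1$.
\end{itemize}
Summing over the $k-1$ columns gives the theorem.
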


For \(k\ge1\), define
\[
  f_{\Cclass}^{=}(k)
  =\sup\{\chi(G):G\in\Cclass,\ \omega(G)=k\}.
\]
The bound~\eqref{eq:classical-bound} makes this quantity finite.
Theorem~\ref{thm:logarithmic} implies
\[
  \lim_{k\to\infty}\frac{f_{\Cclass}^{=}(k)}{k^3}=0.
\]
The result does not give a fixed \(\varepsilon>0\) for which
\(f_{\Cclass}^{=}(k)=O(k^{3-\varepsilon})\). In particular, obtaining
a quadratic binding function requires a further improvement.

We also prove an explicit estimate. For integers \(k\ge8\), put
\begin{equation}\label{eq:parity-saving}
 S(k)=
 \begin{cases}
 (k-5)(k-6)(k-7)/24,&k\text{ odd},\\
 (k-4)(k-6)(k-8)/24,&k\text{ even},
 \end{cases}
 \qquad
 \Ueight(k)=\binom{k+2}{3}-S(k).
\end{equation}

\begin{theorem}\label{thm:one-eighth}
Every \(G\in\Cclass\) with \(k=\omega(G)\ge8\) satisfies
\[
  \chi(G)\le\Ueight(k)\le
  \frac{k^3}{8}+\frac{5k^2}{4}-4k+8.
\]
The second inequality is an equality for even \(k\); for odd \(k\),
the difference between its right-hand side and \(\Ueight(k)\) is
\((k-6)/8\).
\end{theorem}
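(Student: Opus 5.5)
The plan has two parts. First we prove the combinatorial bound \(\chi(G)\le\Ueight(k)\) by refining the Bharathi--Choudum decomposition. Then we verify the closed-form comparison, which is pure algebra.

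\medskip
\noindent\textbf{Algebraic part.} This part is routine and can be done first. For even \(k\),
\[
\binom{k+2}{3}-\frac{(k-4)(k-6)(k-8)}{24}
=\frac{4(k^3+3k^2+2k)-(k^3-18k^2+104k-192)}{24}
=\frac{k^3}{8}+\frac{5k^2}{4}-4k+8 .
\]
For odd \(k\), the two values of \(S\) differ by
\[
(k-6)\bigl[(k-5)(k-7)-(k-4)(k-8)\bigr]/24=3(k-6)/24=(k-6)/8\ge0 .
\]
So the odd-case \(\Ueight(k)\) lies exactly \((k-6)/8\) below the right-hand side. This gives both the inequality and the stated equality and difference statements.

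\medskip
\noindent\textbf{Combinatorial part: setup.} Fix a maximum clique \(Q=\{v_1,\dots,v_k\}\) with a fixed ordering, in the spirit of Wagon. Every \(x\notin Q\) has a non-neighbour in \(Q\). Assign \(x\) to a cell \(C_{ij}\) (\(i\le j\)) using the first one or two non-neighbours of \(x\) in the ordering, exactly as in the classical argument. Then each cell is \(P_3\)-free, hence a disjoint union of cliques. Its clique number is bounded by an explicit budget \(c_{ij}\), essentially because the vertices \(v_1,\dots,v_{i-1}\) (or \(v_1,\dots,v_{j-1}\) minus one) are complete to the cell. The budgets satisfy \(\sum c_{ij}+k=\binom{k+2}{3}\), which recovers \eqref{eq:classical-bound}.

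\medskip
\noindent\textbf{Combinatorial part: merging cells.} The new ingredient is a merging lemma. Take two cells \(C_{ij}\) and \(C_{i'j'}\) whose index pairs are disjoint and suitably ordered (say \(j<i'\), with both pairs lying in the range where \(c\) is large). We aim to show that their union still has no induced \(P_3\), or at least that its clique number is \(\max(c_{ij},c_{i'j'})\) plus a small constant.

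The mechanism is as follows. An edge \(xy\) with \(x\in C_{ij}\), \(y\in C_{i'j'}\), together with a non-edge pattern, produces an induced \(P_2\cup P_3\). Roughly, take \(xy\) as the \(P_2\), and a path \(v_{i}\)--\(z\)--\(v_{j'}\) or \(v_i\)--\(v_{i'}\)--\(w\) as the \(P_3\), using the prescribed non-adjacencies of \(x\) and \(y\) to \(v_i,v_j,v_{i'},v_{j'}\).

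Consequently one colour palette of size \(\max(c,c')\) serves both cells, saving \(\min(c,c')\) colours. We then pick a family of disjoint merges, for example pairing the index \(t\) with \(t+1\) for \(t\ge5\). Summing the saved budgets over this matching should give \(\sum_m \binom{m}{2}\) over every other \(m\), which is the quantity \(\frac14\binom{k-5}{3}\)-type expression \(S(k)\). The parity of \(k\) decides whether the last index is left unpaired, producing the two formulas. The restriction \(k\ge8\) is where the matching becomes nonempty.

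\medskip
\noindent\textbf{Main obstacle.} The hardest step is the merging lemma. Exhibiting the forbidden \(P_2\cup P_3\) requires care, because \(x\) or \(y\) may be adjacent to the chosen \(Q\)-vertices in ways that destroy inducedness. Cross edges may therefore not vanish entirely, and we may only be able to show they form a structure whose union has clique number at most the larger budget.

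I would first try to prove that any clique of \(C_{ij}\cup C_{i'j'}\) meeting both cells extends, via \(v_1,\dots,v_{i-1}\), to a clique larger than \(k\), which would contradict maximality of \(Q\). If that fails, I would weaken the pairing to cells where this contradiction is available. I would then recount the savings, aiming to land exactly on \(S(k)\).
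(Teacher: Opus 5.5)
Your algebra for the second inequality is correct: the even-case expansion and the difference \((k-6)/8\) between the two formulas for \(S(k)\) both check out. The gap is the merging lemma, on which the bound \(\chi(G)\le\Ueight(k)\) rests entirely, and it fails in two places.

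\emph{The forbidden-subgraph mechanism.} It does not work for the pairs you choose. If \(x\in C_{ij}\), \(y\in C_{i'j'}\) and \(i<j<i'<j'\), then \(y\) is adjacent to every \(v_t\) with \(t<j'\), \(t\ne i'\), and in particular to \(v_i\) and \(v_j\). So neither \(v_i-z-v_{j'}\) nor \(v_i-v_{i'}-w\) is anticomplete to \(xy\). The common non-neighbours of \(x,y\) in \(Q\) form a clique, so an induced \(P_3\) anticomplete to \(xy\) would need vertices outside \(Q\), about which the decomposition says nothing. For the same reason you lose the matching property of cross-nonedges that makes pairs tractable. Its proof uses the induced path \(v_j-v_i-y\), which needs \(y\) to see \(v_i\) but not \(v_j\).

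\emph{The fallback.} It controls only the clique number. Counting the \(j-2\) common neighbours \(v_t\) (\(t<j\), \(t\ne i\)) does give \(\omega(C_{ij}\cup C_{i'j'})\le k-j+2\), the larger width. But one palette of that size need not colour both cells, because a union of two cluster graphs in \(\Cclass\) need not be perfect. For example, take cliques \(A_1,A_2\) (anticomplete) and \(B_1,B_2\) (anticomplete), all of order \(a\ge2\). Make all cross pairs adjacent except those on one cycle of length \(4a\) with labels \(A_1,B_1,A_2,B_2,A_1,\ldots\).
\begin{itemize}
\item The graph is \((P_2\cup P_3)\)-free. Two adjacent vertices in different cliques have at most two common non-neighbours, and two in the same clique have a clique as common non-neighbourhood.
\item Its cross-nonedges are matchings, and \(\omega=a\).
\item It has \(\alpha=3\) on \(4a\) vertices, so \(\chi\ge4a/3\).
\end{itemize}
So the saving of \(\min(c,c')\) per merge is unfounded, and the count that ``should give \(S(k)\)'' has no mechanism behind it.

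What you need is a genuine chromatic bound for a pair of cells, and the right pairs are two cells of the \emph{same} column. There the \(j-3\) common neighbours in \(Q\) cap the clique number of the union at \(w+1\), and the induced path \(v_j-v_i-y\) gives matching cross-nonedges (Lemma~\ref{lem:column-interface}). From these two facts one proves the substantial step, Theorem~\ref{thm:universal-pair}: such a union satisfies \(\chi\le\lfloor3W/2\rfloor+2\). Its proof has three ingredients:
\begin{itemize}
\item it colours the paths and cycles of the nonedge graph on two distinguished components per side at about \(3/8\) of a colour per vertex;
\item it balances the loose vertices against reserved one-side palettes by a rounding argument;
\item it uses the overlap obstruction (Lemma~\ref{lem:overlap-obstruction}) to control the remaining components.
\end{itemize}
With \(W=w+1\) a pair saves \(\lfloor w/2\rfloor-3\) colours, not \(\min(c,c')\). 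Pairing the \(n=j-1\) cells of each column arbitrarily then saves \(\sum_{n=2}^{k-7}\lfloor n/2\rfloor\lfloor(k-n-5)/2\rfloor\), which equals \(S(k)\) exactly. The shift by \(5\) and the parity split come from this sum, not from a matching of indices \(t,t+1\).
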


\begin{corollary}\label{cor:three-twentieths}
Every \(G\in\Cclass\) with clique number \(k\ge47\) satisfies
\[
  \chi(G)\le\frac3{20}k^3.
\]
\end{corollary}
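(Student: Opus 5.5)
The plan is to derive the corollary directly from the explicit cubic bound in Theorem~\ref{thm:one-eighth}. No further structural argument is needed: the whole task is a polynomial inequality in \(k\).

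Since \(k\ge47\ge8\), Theorem~\ref{thm:one-eighth} gives
\[
  \chi(G)\le\frac{k^3}{8}+\frac{5k^2}{4}-4k+8 .
\]
It therefore suffices to show that this right-hand side is at most \(\tfrac{3}{20}k^3\) for all integers \(k\ge47\). Since \(\tfrac{3}{20}-\tfrac18=\tfrac1{40}\), multiplying by \(40\) turns this into the requirement
\[
  g(k):=k^3-50k^2+160k-320\ge0 \qquad(k\ge47).
\]

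I will check the base case and then argue monotonicity.

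\textbf{Base case.} At \(k=47\) we have \(47^3=103823\), while \(50\cdot47^2-160\cdot47+320=110450-7520+320=103250\). Hence \(g(47)=573>0\).

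\textbf{Monotonicity.} The derivative is \(g'(k)=3k^2-100k+160\). Its larger root is below \(33\), so \(g'(k)>0\) for every real \(k\ge47\). Thus \(g\) is increasing on \([47,\infty)\), and \(g(k)\ge g(47)>0\) for all \(k\ge47\).

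The only point needing care is the arithmetic at the threshold, which is the closest thing to an obstacle here. A quick check shows \(g(46)=97336-98760<0\), so the constant \(47\) is exactly where this particular explicit formula starts to give the bound. For odd \(k\) the sharper value \(\Ueight(k)\) could lower the threshold slightly, but this is not needed for the statement as given.
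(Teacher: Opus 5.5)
Your argument is correct and is essentially the paper's. Both invoke Theorem~\ref{thm:one-eighth} and reduce the claim to \(k^3-50k^2+160k-320\ge0\) for \(k\ge47\); the paper checks this by writing \(k=47+t\) to get \(t^3+91t^2+2087t+573\), whose coefficients are all positive, which is equivalent to your check of the value at \(47\) plus monotonicity.

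Only your closing aside is off. Since \(46\) is even, \(\Ueight(46)=E_8(46)\), so the bound already fails at \(46\) and the sharper odd-case formula cannot lower the threshold \(47\).
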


The appendix sharpens the explicit cubic coefficient.

\begin{theorem}\label{thm:refined}
Every \(G\in\Cclass\) with \(k=\omega(G)\ge2\) satisfies
\[
  \chi(G)\le
  \Bexp(k):=\frac{208k^3+3615k^2-871k-1278}{1674}.
\]
For integers \(k\ge8\), one has
\(\Bexp(k)<\Ueight(k)\) if and only if \(k\ge1222\).
\end{theorem}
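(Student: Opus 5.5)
The plan is to prove the two assertions of Theorem~\ref{thm:refined} separately. The colouring bound \(\chi(G)\le\Bexp(k)\) is the substantive part. The comparison with \(\Ueight(k)\) is a finite polynomial computation.

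For the colouring bound I would reuse the decomposition behind Theorem~\ref{thm:one-eighth}. Fix a maximum clique \(Q=\{v_1,\dots,v_k\}\), ordered as in Wagon's approach. Place each vertex outside \(Q\) into a cell \(C_{ij}\), \(i\le j\), determined by the first non-neighbours of the vertex in \(Q\). As in Bharathi--Choudum, \(P_2\cup P_3\)-freeness, applied with the edge \(v_iv_j\) or a suitable edge of \(Q\), forces each cell to be a disjoint union of cliques. The ordering also bounds the clique size in \(C_{ij}\) by roughly \(k-i\) or \(k-j\). Colouring cells separately gives \(\binom{k+2}{3}\). The proof of Theorem~\ref{thm:one-eighth} already shows that certain families of cells, for example those sharing a first index, can share a palette. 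The refinement would enlarge these families: I would find a richer collection of admissible groupings, meaning sets of cells whose union still has clique number, or a clique-cover structure, controlled by the largest member. Each grouping then costs only its maximal clique bound.

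The refinement then becomes an optimisation problem. The cells are indexed by lattice points of a triangle-like region, and each must be covered by some grouping at a cost given by the clique bounds. I would set this up as a covering problem whose continuous limit, after scaling indices by \(k\), is a linear programme over a few shapes of groupings. Solving it should yield the density \(208/1674=104/837\). The lower-order terms \(3615k^2-871k-1278\) would then come from carefully counting the boundary cells and the rounding losses of a discretised periodic pattern. I expect the main obstacle to be the structural lemma certifying that the larger groupings really can be coloured with the claimed number of colours. This needs a case analysis of how vertices in different cells interact, using forbidden \(P_2\cup P_3\) configurations with edges of \(Q\). The second difficulty is making the discretised scheme valid for all \(k\ge2\), not only asymptotically. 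For small \(k\) I would check that \(\Bexp(k)\ge\binom{k+2}{3}\) on a finite range and fall back on \eqref{eq:classical-bound} there.

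For the comparison, set \(D(k)=\Ueight(k)-\Bexp(k)\) separately for odd and even \(k\), using \eqref{eq:parity-saving}. Each \(D\) is a cubic in \(k\) with leading coefficient
\[
\tfrac18-\tfrac{104}{837}=\tfrac{5}{6696}>0 .
\]
I would multiply by \(6696\) to get integer cubics and locate their real roots. The claim should follow from three checks for each parity: the largest real root lies in \((1220,1222)\), with \(D(1221)\le0<D(1222)\) for the relevant parity; there is no sign change of \(D\) on \([8,1221]\); and \(D\) stays positive beyond the largest root. A Sturm-sequence or derivative argument, or simply the observation that the remaining roots are non-real or below \(8\), would settle the second check.
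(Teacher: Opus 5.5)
Your plan for the comparison is sound and is essentially the paper's argument. Multiplying by \(6696\) gives the integer cubics \(P_e(k)=5k^3-6090k^2-23300k+58680\) and \(P_o(k)=5k^3-6090k^2-24137k+63702\). The largest root of each lies in \((1221,1222)\), and the other two roots are real and lie below \(8\).

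The colouring bound, however, has a genuine gap. Your cost model, in which a grouping of cells ``costs only its maximal clique bound'', fails in the setting where your structural lemma would have to live. Take two cells \(X=A\cup B\) and \(Y=C\cup D\). Let all four components be cliques of even order \(w\), with cross-nonedges forming \(w/2\) disjoint \(8\)-cycles whose labels run \(A,C,B,D,A,C,B,D\).
\begin{itemize}
\item Cross-nonedges between opposite components form matchings.
\item A cross edge has exactly two common nonneighbours, and the common nonneighbours of an edge inside a component form the other component on that side, which is a clique. Hence the union is \((P_2\cup P_3)\)-free.
\item Its clique number is \(w\).
\item A four-vertex independent set would need a \(4\)-cycle of nonedges, which does not exist.
\item A three-vertex independent set is a consecutive triple of one \(8\)-cycle, and each cycle holds at most two disjoint such triples.
\end{itemize}
With at most \(w\) triples among \(4w\) vertices, \(\chi(X\cup Y)\ge 2w-w/2=\tfrac{3}{2}w>w+1\) for \(w\ge4\). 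So no argument using only the cluster structure, the matching property and \((P_2\cup P_3)\)-freeness can colour even a pair at its clique bound. This factor \(\tfrac{3}{2}\) is exactly what Theorem~\ref{thm:universal-pair} pays.

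You also misdescribe the proof of Theorem~\ref{thm:one-eighth}. It does not let cells with a common first index share one palette; one clique-bound palette per first index would even give a quadratic bound. Instead it pairs cells \emph{within a column} at cost about \(\tfrac{3}{2}w\) per pair, and that is where the coefficient \(\tfrac18\) comes from. If same-column pairs cost only \(w+1\), your linear programme would give about \(k^3/12\). So the claim that it ``should yield'' \(104/837\) has nothing behind it.

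What is actually needed are deficiency-sensitive estimates for blocks inside one column, none of which is a clique-number statement.
\begin{itemize}
\item For three cells of total deficiency \(D\), Proposition~\ref{prop:three-cell} gives \(w+\tfrac{23}{3}D+48\) colours when some pair is high, and \(2w+D+11\) otherwise. It does this by sharing label colours along the triangle components of the complement on the three largest components (Lemmas~\ref{lem:third-anchor-conflict}--\ref{lem:uniform-trimming}).
\item A high pair costs \(\tfrac{3}{2}w-\tfrac{e+f}{12}+\tfrac{16}{3}\). This uses the slack on three-vertex paths in Lemma~\ref{lem:small-tail-reward}.
\item Non-high pairs are handled by Lemma~\ref{lem:strong-anchor}, after sorting cells by the profile \(e+x\) so that the errors \(|t_X-t_Y|/2\) sum to at most \(w/2\) per column.
\end{itemize}
Cells of deficiency below \(\tau(w)=(5w-160)/93\) are grouped into triples and the rest into pairs. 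This threshold equalises the per-cell costs at \(\lambda(w)=(208w+784)/279\) in Lemma~\ref{lem:refined-column}, valid for every \(w\ge2\). Summing gives \(\Bexp(k)=k+\sum_{j=2}^{k}\bigl(n\lambda(w)+\tfrac{71}{93}w\bigr)\) with \(n=j-1\) and \(w=k-j+2\). Both the constant \(104/837\) and the lower-order terms therefore come from this balance and the leftover-cell accounting, not from an optimisation over clique-bounded groupings or boundary effects of a periodic pattern. Without lemmas of this kind, your outline does not establish \(\chi(G)\le\Bexp(k)\).
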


The coefficient of \(k^3\) in \(\Bexp(k)\) is
\[
  \frac{104}{837}=\frac18-\frac5{6696}.
\]
These explicit estimates and Theorem~\ref{thm:logarithmic} serve different
purposes. The former have specified constants and ranges, whereas the
latter gives the stronger asymptotic rate. We do not optimise the
constant and threshold in the logarithmic estimate.

Both arguments use the ordered decomposition into cells, each a
disjoint union of cliques. Within a column, the nonedges between two
components in different cells form a matching. For a pair of cells,
we share colours along nonedge paths and cycles on two selected
components from each cell, reserving colours for the remaining
components. This gives Theorem~\ref{thm:one-eighth}. To obtain the
logarithmic bound, we instead control the component sizes throughout
a column and partition its complement into cliques of successively
smaller orders.

The logarithmic argument builds on Shearer's entropy
method~\cite{Shearer1995} and the occupancy method of Davies, Jenssen,
Perkins and Roberts~\cite{DJPR2017,DJPR2018}. We use the entropy
inequality and local occupancy theorem of Davies, Kang, Pirot and
Sereni~\cite{DKPS2020}. In the complementary class, the classical
cubic estimate bounds a graph's order polynomially in its clique and
independence numbers. We thus obtain an independent-set estimate whose
dependence on the clique number is explicit. The component-size bounds
then let us apply this estimate at each stage of the clique partition.

For comparison, Molloy's general \(K_r\)-free list-colouring
theorem~\cite[Theorem~2]{Molloy2019} has a factor linear in the excluded
clique order. The estimate in Theorem~\ref{thm:occupancy} uses the
additional forbidden induced subgraph to obtain logarithmic dependence
on the upper bound for the clique number. This dependence permits
the summation over the clique sizes used in the multiscale argument
in Section~\ref{sec:multiscale}. We state the probabilistic results we use and prove the estimates
specific to the graph classes considered here.

Section~\ref{sec:decomposition} establishes the decomposition and
elementary estimates. Sections~\ref{sec:pair} and~\ref{sec:explicit}
prove the explicit one-eighth bound. Sections~\ref{sec:entropy}
and~\ref{sec:multiscale} prove the logarithmic bound.
Appendix~\ref{app:refinement} contains the full proof of
Theorem~\ref{thm:refined}.

\section{The ordered decomposition}
\label{sec:decomposition}

For a graph \(H\) and a vertex \(v\in V(H)\), write \(N_H(v)\) and
\(N_H[v]\) for its open and closed neighbourhoods, and
\(\deg_H(v)=|N_H(v)|\). We omit the subscript when the ambient graph is clear.
Parameters of a vertex set refer to its induced subgraph; for example,
\(\chi(X)=\chi(G[X])\) and \(\omega(X)=\omega(G[X])\).
Write \(e(H)=|E(H)|\) and, for nonempty \(H\),
\(\overline d(H)=2e(H)/|V(H)|\) for its average degree.
For disjoint vertex sets \(X,Y\), let \(E_H(X,Y)\) be the set of
edges of \(H\) with one endpoint in each set.
Two disjoint sets are complete to one another if every cross pair is
an edge, and anticomplete if no cross pair is an edge.
A \emph{cluster graph} is a graph whose components are cliques.
It needs exactly as many colours as its largest component has vertices.
We use \(\chi(\varnothing)=\omega(\varnothing)=\alpha(\varnothing)=0\).
Write \(x_+=\max\{x,0\}\) for \(x\in\mathbb R\), and
\([m]=\{1,\ldots,m\}\) for a nonnegative integer \(m\), with
\([0]=\varnothing\). A \emph{palette} is a finite set of colour labels.

For a graph \(L\), let \(\theta(L)\) be the minimum number of cliques
covering its vertices. A cover can be made into a partition by assigning
each vertex to one clique containing it. Thus
\begin{equation}\label{eq:complement-cover}
  \theta(\overline G)=\chi(G).
\end{equation}
All logarithms are natural unless a base is displayed.

Choose an ordered maximum clique \(A=(a_1,\ldots,a_k)\) of
\(G\in\Cclass\). For each \(i\), put a vertex outside \(A\) into
\(I_i\) if its only nonneighbour in \(A\) is \(a_i\).
Assign every other outside vertex to \(C_{ij}\), where \(i<j\)
are its first two nonneighbour indices in \(A\). We call \(C_{ij}\)
a \emph{cell}, and the union of the \(j-1\) cells with second
index \(j\) the \(j\)th \emph{column}. This is the ordered
maximum-clique decomposition used in~\cite[Section~3]{BC2018},
following Wagon~\cite{Wagon1980}.

\begin{lemma}\label{lem:decomposition}
The sets \(A\), \(I_i\) and \(C_{ij}\) partition \(V(G)\).
Each \(\{a_i\}\cup I_i\) is independent. Every cell \(C_{ij}\)
is a cluster graph whose components have order at most
\[
  w_j=k-j+2.
\]
Moreover, every vertex of \(C_{ij}\) is adjacent to \(a_t\) for
all \(t<j\), \(t\ne i\).
\end{lemma}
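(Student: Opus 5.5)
The partition claim comes straight from the definitions. I will split vertices outside \(A\) by how many nonneighbours they have in \(A\). No outside vertex \(v\) is adjacent to all of \(A\), since otherwise \(A\cup\{v\}\) would be a clique of order \(k+1\). A vertex with exactly one nonneighbour \(a_i\) lies in \(I_i\) and nowhere else. Every other outside vertex has at least two nonneighbours, so its first two nonneighbour indices \(i<j\) are well defined and place it in exactly one cell \(C_{ij}\). The last assertion is also immediate: a vertex of \(C_{ij}\) has \(a_i\) as its only nonneighbour among \(a_1,\dots,a_{j-1}\), so it is adjacent to every \(a_t\) with \(t<j\), \(t\ne i\).

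Next comes independence of \(\{a_i\}\cup I_i\). Every vertex of \(I_i\) is nonadjacent to \(a_i\) by definition. Suppose two vertices \(u,v\in I_i\) were adjacent. Both are complete to \(A\setminus\{a_i\}\), so \((A\setminus\{a_i\})\cup\{u,v\}\) would be a clique of order \(k+1\), a contradiction.

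For the cluster structure of \(C_{ij}\), it suffices to show that \(G[C_{ij}]\) has no induced \(P_3\). Suppose \(x-y-z\) is an induced \(P_3\) in \(C_{ij}\). All three vertices are nonadjacent to \(a_i\) and \(a_j\), while \(a_ia_j\) is an edge. Hence \(a_ia_j\) together with \(x-y-z\) is an induced \(P_2\cup P_3\), which is impossible. So every component of \(C_{ij}\) is a clique.

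For the order bound, let \(K\) be a component of \(C_{ij}\). By the last assertion, \(K\) is complete to the \(j-2\) vertices \(\{a_t:t<j,\ t\ne i\}\), which form a clique. Their union is therefore a clique, so \(|K|+j-2\le k\), which gives \(|K|\le k-j+2=w_j\). No step is a real obstacle. The only points needing care are the induced-subgraph check in the cluster argument, which needs \(a_ia_j\) to be anticomplete to \(\{x,y,z\}\) and this holds by the choice of \(i,j\), and using exactly the adjacencies to the \(a_t\) with \(t<j\) in the size bound.
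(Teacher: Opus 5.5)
Your proof is correct and follows the paper's argument step for step. The partition comes from maximality of \(A\), independence from the \((k+1)\)-clique \((A\setminus\{a_i\})\cup\{u,v\}\), the cluster property from the edge \(a_ia_j\) being anticomplete to an induced \(P_3\) in \(C_{ij}\), and the width bound from the \(j-2\) clique vertices \(a_t\) (\(t<j\), \(t\ne i\)) complete to each component.
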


\begin{proof}
An outside vertex cannot be complete to \(A\), by maximality.
The assignment therefore gives a partition. The stated adjacencies
to \(A\) follow from the choice of the first two nonneighbour indices.
An edge in \(I_i\), together with \(A\setminus\{a_i\}\), would
be a clique of order \(k+1\), so \(\{a_i\}\cup I_i\) is independent.
An induced \(P_3\) in \(C_{ij}\) would be anticomplete to the edge
\(a_i a_j\), which is forbidden. Hence the cell is a cluster graph.
A component together with its \(j-2\) neighbours among
\(a_1,\ldots,a_{j-1}\) is a clique, proving the component-size bound.
\end{proof}

We call \(w_j=k-j+2\) the \emph{width} of the \(j\)th column
and of each cell in that column. This is a prescribed upper bound
on component order and need not be attained; it is not the order
of the cell. The \emph{deficiency} of a cell \(X\) of width \(w\)
is \(w-\chi(X)\), the difference between its width and its largest
component order. For an empty cell the largest component order is
taken to be zero, so its deficiency is \(w\). A cell retains its
assigned width when vertices are deleted.

Separate palettes for the cells, and \(k\) further colours for the
independent sets, recover~\eqref{eq:classical-bound}.

\begin{corollary}\label{cor:baseline}
For \(G\in\Cclass\) with \(k=\omega(G)\),
\[
 \chi(G)\le k+\sum_{j=2}^{k}(j-1)(k-j+2)=\binom{k+2}{3}.
\]
In particular, \(\chi(G)\le(k+2)^3\).
\end{corollary}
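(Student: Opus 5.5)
The plan is to colour each piece of the partition from Lemma~\ref{lem:decomposition} with its own palette and then add up the palette sizes.

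\emph{The sets \(\{a_i\}\cup I_i\).} By Lemma~\ref{lem:decomposition} these \(k\) sets cover \(A\cup\bigcup_i I_i\), and each one is independent. Giving each its own colour uses \(k\) colours.

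\emph{The cells.} Every cell \(C_{ij}\) is a cluster graph whose components have at most \(w_j=k-j+2\) vertices. A cluster graph can be coloured with as many colours as its largest component has vertices, so \(C_{ij}\) needs at most \(k-j+2\) colours. The cells are given pairwise disjoint palettes, also disjoint from the \(k\) colours above. The \(j\)th column has \(j-1\) cells, so it uses at most \((j-1)(k-j+2)\) colours.

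\emph{Properness.} Within each part the colouring is proper, and different parts use disjoint palettes. Hence the combined colouring of \(G\) is proper. This gives
\[
  \chi(G)\le k+\sum_{j=2}^{k}(j-1)(k-j+2).
\]

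\emph{Evaluating the sum.} Substitute \(m=j-1\), so \(m\) runs from \(1\) to \(k-1\) and the summand becomes \(m(k+1-m)\). Then
\[
  \sum_{m=1}^{k-1}m(k+1-m)=(k+1)\frac{(k-1)k}{2}-\frac{(k-1)k(2k-1)}{6}=\frac{(k-1)k(k+4)}{6}.
\]
Adding \(k\) gives
\[
  \frac{k\bigl[(k-1)(k+4)+6\bigr]}{6}=\frac{k(k^2+3k+2)}{6}=\frac{k(k+1)(k+2)}{6}=\binom{k+2}{3}.
\]

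\emph{The cubic bound.} Since \(k(k+1)\le(k+2)^2\), we have \(\binom{k+2}{3}\le(k+2)^3/6\le(k+2)^3\).

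There is no real obstacle in this argument. The only step needing care is the algebraic evaluation of the sum.
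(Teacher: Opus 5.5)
Your proof is correct and is essentially the paper's own argument: a separate palette of at most \(w_j=k-j+2\) colours for each cell, plus \(k\) colours for the independent sets \(\{a_i\}\cup I_i\). Your evaluation of the sum, \(\sum_{j=2}^{k}(j-1)(k-j+2)=k(k-1)(k+4)/6\), is also the identity the paper uses later in the appendix.
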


\begin{lemma}\label{lem:column-interface}
The union of two cells in the same column of width \(w\) has clique
number at most \(w+1\). The union of three distinct cells in that column
has clique number at most \(w+2\). Between a component of one cell
and a component of another, the cross-nonedges form a matching.
If two such components have orders \(s,t\) and their union has clique
number at most \(W\), their nonedge matching has at least \(s+t-W\)
edges.
\end{lemma}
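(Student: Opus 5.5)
We prove the four assertions of Lemma~\ref{lem:column-interface} in order, using Lemma~\ref{lem:decomposition} and the forbidden \(P_2\cup P_3\).

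\emph{Clique bounds.} Fix the column \(j\), so \(w=k-j+2\). Every vertex in any cell \(C_{ij}\) of this column is adjacent to \(a_t\) for all \(t<j\), \(t\ne i\). Let \(K\) be a clique inside the union of cells \(C_{i_1j},\dots,C_{i_rj}\) with \(r\in\{2,3\}\). The vertices \(a_t\) with \(t<j\) and \(t\notin\{i_1,\dots,i_r\}\) are complete to \(K\). There are \(j-1-r\) of them, and together with \(K\) they form a clique. Since \(\omega(G)=k\), we get
\[
  |K|\le k-(j-1-r)=w+r-1.
\]
This gives \(w+1\) for two cells and \(w+2\) for three cells.

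\emph{Matching claim.} Let \(P\) be a component of \(C_{ij}\) and \(Q\) a component of \(C_{i'j}\) with \(i\ne i'\). Suppose some vertex \(p\in P\) has two nonneighbours \(q,q'\in Q\). Then \(q q'\) is an edge, because \(Q\) is a clique. We look for an induced \(P_3\) anticomplete to an edge of \(A\), or for an edge anticomplete to an induced \(P_3\).

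The natural choice is the edge \(a_{i'}a_j\). Both endpoints are nonadjacent to all of \(Q\), but we do not control the adjacency of \(p\) to \(a_{i'}\). So we use instead the edge \(qq'\) together with a \(P_3\) through \(p\) and vertices of \(A\). We have \(p\not\sim a_i\), \(p\not\sim a_j\), and \(a_i\sim a_j\). Also \(q,q'\not\sim a_{i'}\) and \(q,q'\not\sim a_j\).

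The candidate \(P_3\) is \(p - a_{i'} - a_i\), if \(p\sim a_{i'}\). The vertex \(a_i\) is adjacent to \(q\) and \(q'\) when \(i<j\), \(i\ne i'\), which is bad for this choice. So we should instead pick the \(P_3\) to avoid vertices adjacent to \(Q\). The vertices of \(A\) nonadjacent to \(Q\) are \(a_{i'}\), \(a_j\), and possibly later indices.

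The cleaner route is a \(P_3\) of the form \(a_{i'} - ? - a_j\), but that is only an edge. So we use the alternative configuration: the edge \(a_{i'}a_j\) and the \(P_3\) given by \(q - p'\)-type paths. The key step is to choose \(p\) and the relevant \(a\)-vertices so that the \(P_3\) is \(q' - q\) extended by a vertex. I expect this case analysis, in particular settling the adjacency \(p\sim a_{i'}\) and ordering \(i\) versus \(i'\), to be the main obstacle. I would split into the cases \(i<i'\) and \(i'<i\): in one case \(p\sim a_{i'}\) is forced, and in the other we use symmetry by swapping the roles of \(P\) and \(Q\). Each vertex then has at most one cross-nonneighbour, which is the matching claim.

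\emph{Counting.} Let \(m\) be the number of nonedges in the matching, and let \(P'\subseteq P\) be the unmatched vertices of \(P\). Then \(P'\cup Q\) is a clique, since \(P\) and \(Q\) are cliques and every cross pair in it is an edge. Its order is \(s-m+t\), and this must be at most \(W\). Hence \(m\ge s+t-W\).
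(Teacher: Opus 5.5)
\textbf{The gap is in the matching claim.} Your clique bounds and your counting step are correct and agree with the paper's argument. The matching claim is the substantive part of the lemma, and there you never exhibit the forbidden induced \(P_2\cup P_3\). The case analysis you defer to also rests on a false premise. You say you do not control whether \(p\sim a_{i'}\), but you do. Both cells lie in column \(j\), so \(i'<j\) and \(i'\ne i\). Lemma~\ref{lem:decomposition} says every vertex of \(C_{ij}\) is adjacent to \(a_t\) for all \(t<j\), \(t\ne i\). Hence \(p\sim a_{i'}\) whether \(i<i'\) or \(i'<i\), and no split by index order is needed.

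Your planned fallback would not have worked anyway. Swapping the roles of \(P\) and \(Q\) proves a different statement: that no vertex of \(Q\) has two nonneighbours in \(P\). It does not cover the case for vertices of \(P\) that you wanted it to handle.

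\textbf{The missing configuration.} Once \(p\sim a_{i'}\) is known, use the induced path \(p-a_{i'}-a_j\). It is induced because \(p\sim a_{i'}\), \(a_{i'}\sim a_j\) and \(p\not\sim a_j\). The edge \(qq'\) is anticomplete to this path:
\begin{itemize}
\item \(q,q'\in C_{i'j}\) are nonadjacent to \(a_{i'}\) and \(a_j\) by the definition of the cell;
\item they are nonadjacent to \(p\) by assumption.
\end{itemize}
This gives an induced \(P_2\cup P_3\). Your attempts failed because you put \(a_{i'}\) at an end of the path (\(a_{i'}-?-a_j\)) or used \(a_i\), which is adjacent to all of \(Q\). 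The middle vertex must be \(a_{i'}\).

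For the reverse direction, use the path \(q-a_i-a_j\) against an edge of \(P\). This shows each vertex on either side has at most one cross-nonneighbour, which is the matching property. It is exactly the paper's argument, with its path \(a_j-a_i-y\) and its reverse \(a_j-a_h-x\).
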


\begin{proof}
Two cells \(C_{ij},C_{hj}\) have \(j-3\) common neighbours among
\(a_1,\ldots,a_{j-1}\). Three cells have \(j-4\) such common neighbours.
Adjoining these clique vertices proves the respective clique-number bounds.
Suppose \(i<h<j\) and a vertex \(y\in C_{hj}\) misses two
vertices in one component of \(C_{ij}\). Their edge is anticomplete
to the induced path \(a_j-a_i-y\), a contradiction.
The reverse argument uses \(a_j-a_h-x\) for \(x\in C_{ij}\).
The cross-nonedges consequently form a matching. If that matching
has size \(m\), a largest clique in the two components has order
\(s+t-m\): it contains at most one endpoint of each nonedge,
and this restriction is sufficient. Thus \(m\ge s+t-W\).
\end{proof}

\section{A uniform colouring bound for two cells}
\label{sec:pair}

In abstract statements, a \emph{cell} means a part of the specified
vertex partition that induces a cluster graph; it need not be one
of the ordered sets \(C_{ij}\). Empty cells are allowed as placeholders.
After restriction to a vertex set \(S\), the \emph{restricted cells}
are the intersections of the original cells with \(S\). At a given
deletion stage we also call these the \emph{current cells}.
When passing to a complement, we retain these sets and their original
component labels: the component parts are the nonempty intersections
with the original clique components, not components of the complement.

Throughout this section, the two sides \(X,Y\) are disjoint vertex sets
inducing cluster graphs. Distinct components on the same side are anticomplete.
Cross-nonedges between any two opposite components form a matching.
We use \(W\) for a nonnegative integer upper bound on the relevant
clique numbers.

For a partial or complete colouring of the cells under consideration,
a colour is \emph{pure} for one cell if it is unused on all the other
cells, and \emph{mixed} if it is used in more than one cell.
A \emph{pure-cell palette} is reserved for one cell alone.
For two cells, we speak of pure first-side and second-side palettes.

\begin{theorem}\label{thm:universal-pair}
If \(G[X\cup Y]\in\Cclass\) and \(\omega(X\cup Y)\le W\), then
\[
  \chi(X\cup Y)\le\left\lfloor\frac{3W}{2}\right\rfloor+2.
\]
\end{theorem}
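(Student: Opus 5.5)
The plan is to pass to independent sets. A proper colouring of \(G[X\cup Y]\) is a partition into independent sets, and an independent set meets each component (a clique) of \(X\) and of \(Y\) in at most one vertex. Write \(X_1,\dots,X_p\) and \(Y_1,\dots,Y_q\) for the components, ordered by decreasing order, and put \(s=|X_1|\), \(t=|Y_1|\), so \(s,t\le W\). If \(s+t\le\lfloor 3W/2\rfloor+2\), separate palettes of sizes \(s\) and \(t\) for the two cluster graphs already suffice. So I will assume \(s+t>3W/2+2\), which forces \(s,t>W/2+2\). Lemma~\ref{lem:column-interface}, applied with the clique bound \(W\), then gives at least \(s+t-W>W/2+2\) nonedges in the matching between \(X_1\) and \(Y_1\). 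Since \(G[X_1\cup Y_1]\) is cobipartite, hence perfect, it is coloured with \(s+t-m\le W\) colours, where \(m\) is the size of that matching. Each colour class has one or two vertices, and the two-vertex classes are exactly the matched nonedges.

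The next step is to use \((P_2\cup P_3)\)-freeness to show that the remaining components interact with \(X_1\cup Y_1\) in a rigid way. Any induced \(P_3\) of the form \(y-x-y'\), with \(x\in X_1\), \(y\in Y_1\) and \(y'\) in another component \(Y_j\), forbids every edge anticomplete to it. In particular it forbids an edge inside a component \(X_i\) (\(i\ne1\)) that misses \(y\), \(x\) and \(y'\). Because each vertex misses at most one vertex of any opposite component, the large sizes \(s,t>W/2+2\) should leave plenty of such paths. From this I expect to deduce one of two alternatives: either every other component is small (order at most about \(W/2\)), or the other components are almost complete to \(X_1\cup Y_1\) and to each other, so that adding them creates a clique exceeding \(W\) unless they are small.

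In either case the remaining components \(X_i,Y_j\) with \((i,j)\ne(1,1)\) should be colourable using a palette of at most \(\lfloor W/2\rfloor+2\) colours, together with the colours of the \(X_1\)-\(Y_1\) colouring that are free on the relevant side. For the reuse, a colour class \(\{x\}\) with \(x\in X_1\) may be extended by a vertex of some \(Y_j\) that misses \(x\), and symmetrically for \(Y_1\). The extra \(+2\) absorbs parity and the at most one exceptional vertex per component arising from the matching structure. Concretely, I would colour \(X\setminus X_1\) and \(Y\setminus Y_1\) greedily component by component from the combined palette. I would argue that each vertex \(v\) in such a component sees at most \(\lfloor 3W/2\rfloor+1\) colours. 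The count has three parts: at most \(|{\rm comp}(v)|-1\) colours from its own component, all but at most one colour used on the opposite large component, and a bounded number of further colours. This is a degree-type bound on the colours \(v\) sees, which the clique bound \(W\) controls via Lemma~\ref{lem:column-interface}.

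The main obstacle is the structural step. I must establish cleanly that when both \(X_1\) and \(Y_1\) have order above \(W/2\), the other components cannot simultaneously be large and have many nonedges towards the large pair. The first approach I would try is a contradiction argument. Suppose \(X_2\) has order above roughly \(W/2\). Take an edge \(uu'\) in \(X_2\), and find \(x\in X_1\) together with \(y,y'\in Y\) such that \(y-x-y'\) is induced and anticomplete to \(uu'\). Since each of \(y,y'\) misses at most one vertex of \(X_2\), I would use the sizes to choose the edge \(uu'\) avoiding their neighbourhoods. If that fails, then \(X_2\) is nearly complete to \(Y_1\), and \(X_2\cup Y_1\) contains a clique of order about \(|X_2|+t-1>W\), giving the contradiction.
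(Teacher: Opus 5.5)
Your proposal has a genuine gap at the step you flag as the main obstacle. The structural dichotomy is false, and the mechanism you propose for it can never produce a contradiction. An edge \(uu'\) inside \(X_2\) that is anticomplete to a path \(y-x-y'\) with \(y\in Y\) would need \(y\) to miss both \(u\) and \(u'\). The matching condition lets \(y\) miss at most one vertex of \(X_2\), so \(X_2\setminus N(y)\) has at most one vertex and no edge of \(X_2\) can avoid \(N(y)\). Your fallback estimate is also wrong. That each vertex of \(X_2\) misses at most one vertex of \(Y_1\) does not give a clique of order about \(|X_2|+t-1\). The largest clique in \(X_2\cup Y_1\) has order \(|X_2|+t-m\), where \(m\) is the size of the nonedge matching, and \(m\) can be as large as \(\min\{|X_2|,t\}\).

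Here is a concrete counterexample. Take cliques \(Q_1=\{a_i\}\), \(Q_2=\{b_i\}\), \(R_1=\{c_i\}\), \(R_2=\{d_i\}\), \(i\in[n]\), with \(Q_1,Q_2\) anticomplete and \(R_1,R_2\) anticomplete. Make every cross pair adjacent except \(a_ic_i,a_id_i,b_ic_i,b_id_i\). The cross-nonedges form matchings and \(\omega=n\). One checks that the common nonneighbourhood of every induced \(P_3\) is independent, so the graph is \((P_2\cup P_3)\)-free. Yet with \(W=n\ge5\), all four components have order \(W\), inside the regime \(s+t>3W/2+2\) you are treating.

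This configuration, with both second-largest components exceeding \(W/2+2\), is the real content of the theorem. When a second-largest component is small, the bound follows at once from \(\chi(X\cup Y)\le W+\min\{x,y\}\) (Lemma~\ref{lem:anchor-bound}); your first alternative is essentially this easy case. In the remaining case your greedy count also fails. A vertex of \(X_2\) is adjacent to all but at most one vertex of each \(Y\)-component. It therefore sees \(|X_2|-1\) colours on its own component and at least \(t-1\) on \(Y_1\), which can be close to \(2W\).

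The paper handles this case with ideas absent from your proposal. The usable obstruction (Lemma~\ref{lem:overlap-obstruction}) pits a cross edge \(uz\), with \(u\) in a tail, against the induced path \(q_z-r-q'_z\). Here \(q_z,q'_z\) are the nonneighbours of \(z\) in the two large components on the other side. Through the overlap of matching domains, this forces the tails to be complete to the second components (Corollary~\ref{cor:second-complete}).

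The four-component core is then coloured globally (Lemmas~\ref{lem:padding}--\ref{lem:separated-tails}). The paper pads the core to equal orders and equal matching sizes, and splits its nonedge graph into paths and cycles with label pattern \(A,C,B,D\). It colours these at roughly \(3/8\) colours per vertex using independent triples. Finally it rounds a polytope solution so that the loose vertices are balanced between two pure palettes. The cases in which a tail misses a largest component need the exceptional-vertex Lemmas~\ref{lem:one-sided-exceptions} and~\ref{lem:two-sided-exceptions}.
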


There is no restriction on the number of components in either cell.
We first construct a colouring with separate palettes reserved for
the two sides. We then consider the possible adjacencies between
the remaining and distinguished components.

\subsection{Four components and reserved palettes}

Distinguish at most two components \(Q_1,Q_2\) of \(X\) and
\(R_1,R_2\) of \(Y\), using the empty set for any unused label. Put
\[
 a=\max\{|Q_1|,|Q_2|\},\quad
 b=\max\{|R_1|,|R_2|\},\quad
 X_0=X\setminus(Q_1\cup Q_2),\quad
 Y_0=Y\setminus(R_1\cup R_2).
\]
The union of the four distinguished components is the \emph{core}
of this configuration; \(X_0,Y_0\) are its respective \emph{tails}.
A distinguished clique used as a reference for colours or vertex
labels is called an \emph{anchor}.

\begin{lemma}\label{lem:padding}
Suppose \(a,b\le W<a+b\), and
\(\omega(Q_i\cup R_j)\le W\) for every core pair. Put
\[
 m=a+b-W,\qquad \rho_X=W-b,\qquad \rho_Y=W-a.
\]
The four components can be enlarged, using dummy vertices and added
edges only, so that the two first-side components have order \(a\),
the two second-side components have order \(b\), and each of the
four cross-nonedge matchings has exactly \(m\) edges.
\end{lemma}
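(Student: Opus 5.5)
The plan is to process the four core pairs \(Q_i\cup R_j\) one at a time, keeping throughout the invariant
\[
 m_{ij}=\bigl(|Q_i|+|R_j|-W\bigr)_+ \qquad\text{for all } i,j\in\{1,2\},
\]
where \(m_{ij}\) is the current number of cross-nonedges between \(Q_i\) and \(R_j\). We also keep these cross-nonedges a matching and keep every component a clique. The invariant is first established by adding edges, and then preserved while dummy vertices are added one at a time until the orders are \(a\) and \(b\). At the end, \(m_{ij}=(a+b-W)_+=m\), since \(a+b>W\) by hypothesis.

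\emph{Step 1 (adding edges).} By the last part of Lemma~\ref{lem:column-interface}, the cross-nonedges of a core pair form a matching, and the largest clique in \(Q_i\cup R_j\) has order \(|Q_i|+|R_j|-m_{ij}\). The hypothesis \(\omega(Q_i\cup R_j)\le W\) therefore gives \(m_{ij}\ge |Q_i|+|R_j|-W\), so \(m_{ij}\ge(|Q_i|+|R_j|-W)_+\). We turn surplus matching nonedges into edges until equality holds. A submatching is still a matching, so the invariant now holds.

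\emph{Step 2 (adding a dummy vertex to the first side).} Suppose \(|Q_i|<a\). Add a dummy vertex \(x\) to \(Q_i\), complete to \(Q_i\) so that the component stays a clique. Its adjacency to vertices outside the core is irrelevant. For each \(j\), the target \((|Q_i|+|R_j|-W)_+\) rises by one if \(|Q_i|+1+|R_j|>W\), and is otherwise unchanged (still \(0\)).

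In the unchanged case, make \(x\) complete to \(R_j\). In the other case, \(x\) must miss exactly one vertex of \(R_j\) that is not yet matched, and be adjacent to the rest of \(R_j\). This keeps the cross-nonedges a matching and raises \(m_{ij}\) by one.

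\emph{The main check.} The one point needing care is that such an unmatched vertex exists, that is, \(m_{ij}+1\le |R_j|\). This inequality reads \(|Q_i|+1+|R_j|-W\le|R_j|\), i.e.\ \(|Q_i|+1\le W\). It holds because \(|Q_i|+1\le a\le W\). Here the hypothesis \(a\le W\) is used.

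\emph{Step 3 (adding a dummy vertex to the second side).} Symmetrically, pad each \(R_j\) with dummy vertices \(y\) up to order \(b\). Each \(y\) misses one unmatched vertex of \(Q_i\) when the target for the pair rises, and is otherwise complete to \(Q_i\). The existence of the unmatched vertex is equivalent to \(|R_j|+1\le W\), which holds because \(b\le W\).

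Empty distinguished components cause no difficulty: they are simply padded from order zero, and their targets start at \(0\). Only dummy vertices and new edges are introduced, and original vertices never lose edges, as required.

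After padding, every \(|Q_i|=a\) and every \(|R_j|=b\), so each matching has exactly \(a+b-W=m\) edges. Note also \(m\le\min(a,b)\), because \(\rho_X=W-b\ge0\) and \(\rho_Y=W-a\ge0\), which is consistent with this. Each pair then has clique number \(a+b-m=W\), so the clique bounds are retained as well.
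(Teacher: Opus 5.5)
Your proof is correct and is essentially the paper's construction. In both, each pair keeps exactly $(s+t-W)_+$ of its original nonedges, turns the rest into edges, and gives every further nonedge a dummy endpoint, with $a,b\le W$ (equivalently $m\le\min\{a,b\}$) guaranteeing room. You simply organise this as a one-dummy-at-a-time invariant, which fixes one particular split of the $m-\ell$ new nonedges between first- and second-side dummies and replaces the paper's counting inequalities by the single check $|Q_i|+1\le W$ (resp.\ $|R_j|+1\le W$). One small point: in this abstract section the matching property of core pairs is a standing hypothesis rather than a consequence of Lemma~\ref{lem:column-interface}, though the clique count $|Q_i|+|R_j|-m_{ij}$ from its proof applies verbatim.
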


\begin{proof}
Consider an original pair of orders \(s\le a,t\le b\).
Keep \(\ell=\max\{0,s+t-W\}\) of its original matching nonedges.
Then
\[
 0\le\ell\le m,\qquad
 m-\ell\le(a-s)+(b-t),\qquad
 m-\ell\le\min\{a-\ell,b-\ell\}.
\]
Choose nonnegative integers \(x\le a-s,y\le b-t\) with
\(x+y=m-\ell\), and reserve \(x\) first-side and \(y\) second-side
dummies. Match the first group to distinct unreserved second-side
vertices, and the second group to distinct remaining first-side vertices.
The displayed inequalities guarantee enough vertices on both sides.
Every new nonedge has a dummy endpoint. Add all other missing cross-edges.
Perform this construction independently for all four pairs.
\end{proof}

Call the padded components \(A,B,C,D\), with \(A,B\) on the first
side. The graph \(\Gamma\) consisting of their cross-nonedges has
\[
  N=2a+2b,\qquad E=4m
\]
vertices and edges, respectively, and maximum degree at most two.
Its paths and cycles follow the label order
\begin{equation}\label{eq:four-labels}
  A,C,B,D,A,C,B,D,\ldots
\end{equation}
up to rotation and reversal. Cycles have length divisible by four.
Every consecutive pair or triple is independent in the graph being
coloured, as is the whole of a four-cycle.

In a partial colouring, call the uncoloured vertices \emph{loose}.
If it uses \(c\) colours and leaves \(l\) loose vertices, define its
\emph{effective cost} to be \(c+l/2\). The \emph{loose count vector}
records the numbers of loose vertices in \(A,B,C,D\), in that order.
For a path, the \emph{endpoint count vector} records the number of
endpoints in each of these four sets, counting an isolated vertex twice.

\begin{lemma}\label{lem:path-table}
Every path of order \(n\ge1\) following~\eqref{eq:four-labels}
has one or two partial colourings with the following properties.
They leave at most two loose vertices, have equal effective cost at
most \(3n/8+3/8\), and their mean loose count vector is half the
endpoint count vector. Their loose count vectors differ in
\(\ell_1\)-norm by at most two.
\end{lemma}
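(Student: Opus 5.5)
The plan is a direct construction using only the independence facts recorded after~\eqref{eq:four-labels}. Any two or three consecutive vertices of a path following~\eqref{eq:four-labels} form an independent set. So we may colour the path by cutting it into consecutive blocks of order at most three and giving each block its own colour. The loose vertices will always be endpoints of the path. The remaining work is to choose the blocks, and the one or two colourings, according to \(n \bmod 3\), and then check the cost bound.

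The cases would be as follows. Write \(n=3q+r\) with \(r\in\{0,1,2\}\).
\begin{itemize}
\item \(n=1\). Take the single colouring leaving the vertex loose. Its effective cost is \(1/2\le 3/4\). Its loose count vector is half the endpoint count vector, since an isolated vertex is counted twice there.
\item \(r=1\), \(n\ge4\). Take two colourings. One leaves the left endpoint loose and cuts the remaining \(3q\) vertices into \(q\) triples; the other is the mirror image. Each has cost \(q+1/2=n/3+1/6\).
\item \(r=0\). Again take two mirror-image colourings. Each leaves one endpoint loose and covers the other \(3q-1\) vertices by \(q-1\) triples and one pair, so each has cost \(q+1/2=n/3+1/2\).
\item \(r=2\). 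Take one colouring that leaves both endpoints loose and covers the interior by \(q\) triples, with cost \(q+1\). Take a second colouring with no loose vertex, using \(q\) triples and one pair, with cost \(q+1\) as well.
\end{itemize}

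In every case the two colourings have equal effective cost and leave at most two loose vertices. Their loose vertices are endpoints, and they are arranged so that across the two colourings each endpoint is loose exactly once in total. In the single-colouring case \(n=1\) the double counting takes care of this. Hence the mean loose count vector is half the endpoint count vector. The two loose count vectors differ by moving one loose vertex between the ends, or by the vector of both endpoints, so their \(\ell_1\)-distance is at most two.

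The only inequality to verify is the cost bound \(n/3+c\le 3n/8+3/8\) with \(c\in\{1/6,1/2,1/3\}\); the case \(r=2\) gives \(c=1/3\). This reduces to \(n/24\ge c-3/8\). For \(c\le 3/8\) it holds trivially. The tight case is \(r=0\), which needs \(n\ge3\) and holds with equality at \(n=3\).

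I expect the main obstacle to be the case \(r=2\). The naive symmetric choice of leaving one endpoint loose costs \(n/3+5/6\) and breaks the bound for small \(n\). The fix is the unbalanced pair of colourings described above, one with both endpoints loose and one with none; it still has the correct mean.
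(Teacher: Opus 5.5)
Your proposal is correct. Every case checks out: equal costs, the mean-vector identity (including the double count for \(n=1\)), the \(\ell_1\) bound, and the cost inequality, which is tight exactly at \(n=3\).

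It differs from the paper's proof only in how long paths are handled. For \(n\le 8\), your mod-\(3\) colourings are precisely the rows of Table~\ref{tab:path-choices}, up to where the pair is placed. For \(n\ge 9\), the paper instead peels off initial eight-vertex segments coloured \(123,456,78\), at rate \(3/8\) per vertex, and uses the table on the last \(1\le s\le 8\) vertices. You keep cutting into triples throughout. This gives the sharper cost \(n/3+c\) with \(c\le 1/2\), and a uniform one-line verification with no finite table or periodic reduction.

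The sharper rate does not by itself lower the \(3W/2+2\) bound of Lemma~\ref{lem:separated-tails}. Paths of order three attain \(3n/8+3/8\) under either scheme.

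What the paper's eight-vertex block buys is label balance. It contains exactly two vertices of each label, and together with the edge to the rest of the path it removes two edges of each matching. The appendix exploits exactly this in Lemma~\ref{lem:small-tail-reward}, to reduce to paths of order at most eight while preserving \(\rho_X,\rho_Y\).
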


\begin{proof}
Number consecutive vertices \(1,\ldots,n\), and write \(123\), for
example, for one colour on that triple. Distinct listed blocks receive
distinct colours. Table~\ref{tab:path-choices} gives the choices for
orders one to eight.

\begin{table}[htbp]
\centering
\small
\begin{tabular}{@{}c l c l c c@{}}
\toprule
\(n\)&First colouring&Loose&Second colouring&Loose&Cost\\
\midrule
1&none&\(1\)&same&\(1\)&\(1/2\)\\
2&\(12\)&none&none&\(1,2\)&\(1\)\\
3&\(12\)&\(3\)&\(23\)&\(1\)&\(3/2\)\\
4&\(123\)&\(4\)&\(234\)&\(1\)&\(3/2\)\\
5&\(234\)&\(1,5\)&\(12,345\)&none&\(2\)\\
6&\(234,56\)&\(1\)&\(12,345\)&\(6\)&\(5/2\)\\
7&\(234,567\)&\(1\)&\(123,456\)&\(7\)&\(5/2\)\\
8&\(123,456,78\)&none&\(234,567\)&\(1,8\)&\(3\)\\
\bottomrule
\end{tabular}
\caption{Partial path colourings and their common effective costs.}
\label{tab:path-choices}
\end{table}

Each listed colour class is independent by~\eqref{eq:four-labels}. In each two-choice
row either one endpoint is loose in each choice, or both are loose
in one choice and neither in the other. This proves the mean-vector
identity and the norm bound. Direct substitution gives the cost bound.
For \(n=8q+s\), \(1\le s\le8\), colour each initial segment of
eight vertices as \(123,456,78\), using three colours, and use the
table on the last \(s\) vertices. Removing eight vertices preserves
the first endpoint label. The effective cost increases by \(3q\),
exactly the increase in \(3n/8\).
\end{proof}

Every cycle of order \(n\) can be coloured without loose vertices
using at most \(3n/8\) colours. Use one colour for a four-cycle.
For longer cycles, use eight-vertex blocks, with one twelve-vertex
block coloured by four triples when \(n\equiv4\pmod8\).

\begin{lemma}\label{lem:round-four}
Suppose that the two vectors in each binary choice in \(\mathbb R^4\)
differ by at most two in \(\ell_1\)-norm. If choosing a convex
combination from each pair gives a total \(r\), then one can choose
one vector from each pair so that their total \(l\) satisfies
\(\normone{l-r}\le4\).
\end{lemma}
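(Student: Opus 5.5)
We use a Beck--Fiala type rounding argument with four coordinates. Write the pairs as \((u_p,v_p)\), \(p\in[P]\), with \(\normone{u_p-v_p}\le2\), and the given convex combination as \(r=\sum_p(\lambda_p u_p+(1-\lambda_p)v_p)\), where \(\lambda_p\in[0,1]\). Put \(d_p=u_p-v_p\in\mathbb R^4\). For \(\mu\in[0,1]^P\), any choice of \(\lambda\) gives total
\[
  \sum_p v_p+\sum_p\mu_p d_p .
\]
So we must find \(\mu\in\{0,1\}^P\) with \(\normone{\sum_p(\mu_p-\lambda_p)d_p}\le4\). Pairs with \(d_p=0\) are irrelevant and can be fixed arbitrarily. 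Then \(\lambda\) is a fractional point of the polytope
\[
  \Big\{\mu\in[0,1]^P:\ \sum_p\mu_p d_p=\sum_p\lambda_p d_p\Big\},
\]
which is cut out by four linear equations.

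\textbf{Step 1 (iterative rounding).} While more than four coordinates of \(\mu\) are fractional, the equality constraints restricted to the fractional coordinates form a \(4\times F\) system with \(F>4\). It therefore has a nonzero kernel vector. Moving \(\mu\) along this vector until some fractional coordinate hits \(0\) or \(1\) preserves \(\sum\mu_pd_p\) exactly and fixes one more coordinate. Iterating, we reach \(\mu\) with all but at most four coordinates integral and \(\sum_p\mu_pd_p\) unchanged, so the total still equals \(r\).

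\textbf{Step 2 (final rounding).} At most four indices \(p\in F\), \(|F|\le4\), remain fractional. Rounding each arbitrarily changes the total by \(\sum_{p\in F}\epsilon_p d_p\) with \(|\epsilon_p|<1\). Its \(\ell_1\)-norm is at most \(\sum_{p\in F}\normone{d_p}<2|F|\le8\). This is too weak, and getting from \(8\) down to \(4\) is the main obstacle.

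First we sharpen Step 1 by one. Run the kernel argument with only three of the four coordinate constraints, keeping the fourth as an invariant through the sum over all coordinates, or more simply treat the four coordinates by working in a suitable three-dimensional subspace. Concretely, when \(F\ge4\) fractional coordinates remain, there is a nonzero kernel vector of any three of the equations, and we keep the remaining coordinate only approximately. Doing this cleanly seems awkward.

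A cleaner route is to handle the residual directly. With \(|F|\le4\), choose \(\epsilon_p\in\{-\mu_p,1-\mu_p\}\) to minimise \(\normone{\sum_{p\in F}\epsilon_pd_p}\). Average over independent randomized rounding, taking \(\epsilon_p=1-\mu_p\) with probability \(\mu_p\). The expectation of each coordinate of the error is zero. Its variance is
\[
  \sum_{p\in F}\mu_p(1-\mu_p)\,d_{p,c}^2,\qquad \mu_p(1-\mu_p)\le\tfrac14 .
\]
Hence, by Cauchy--Schwarz,
\[
  \mathbb E\normone{\mathrm{error}}\le\sum_c\Big(\sum_p\tfrac14 d_{p,c}^2\Big)^{1/2}.
\]
With \(\normone{d_p}\le2\) and \(|F|\le4\), the quantities \(\sum_c\sum_p d_{p,c}^2\) and \(\sum_c|d_{p,c}|\) are bounded, and this should give a bound of order \(4\). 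If the crude estimate lands slightly above \(4\), we fall back on the sharper Step 1. There we iterate until at most four fractional coordinates remain and then, in the case \(|F|=4\), note that the four vectors \(d_p\) are dependent with the kernel direction available, allowing one more coordinate to be fixed. This leaves \(|F|\le3\) plus one coordinate rounded at error \(\le2\), and a pairing argument bounds the remaining error by \(2\). The bookkeeping for this last reduction is where we expect the real work to lie.
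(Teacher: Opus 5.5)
Your Step~1 matches the paper's argument: an extreme point of \(\{\mu\in[0,1]^P:\sum_p\mu_pd_p=\sum_p\lambda_pd_p\}\) has at most four fractional coordinates, and your iterative kernel moves reach such a point. The gap is in Step~2. You round the fractional coordinates \emph{arbitrarily}, which only gives \(|\epsilon_p|<1\) and the useless bound \(8\). The missing idea is to round each fractional \(\mu_p\) to its \emph{nearest} endpoint. Then \(|\epsilon_p|=\min\{\mu_p,1-\mu_p\}\le\frac12\), and
\[
 \normone{\textstyle\sum_{p\in F}\epsilon_pd_p}\le\sum_{p\in F}\tfrac12\normone{d_p}\le\tfrac12\cdot2\cdot|F|\le4 .
\]
This is exactly the paper's one-line finish, \(4(1/2)2=4\). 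No sharpening of Step~1 is needed.

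As written, your proposal never reaches the bound. The randomized estimate would close it with one more inequality. Since \(\bigl(\sum_p d_{p,c}^2\bigr)^{1/2}\le\sum_p|d_{p,c}|\), your display gives \(\mathbb E\normone{\mathrm{error}}\le\frac12\sum_{p\in F}\normone{d_p}\le4\), so some outcome of the rounding works. However, you stop at ``should give a bound of order \(4\)'' and defer to a fallback, and that fallback is invalid. Four vectors \(d_p\in\mathbb R^4\) need not be linearly dependent; take \(d_p\) to be the four unit vectors with every \(\mu_p=\frac12\). Then no kernel direction lets you fix a fifth coordinate while keeping the total equal to \(r\). The subsequent ``pairing argument'' is unspecified, and the variant imposing only three of the four equations leaves the drift in the fourth coordinate unquantified. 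Replace all of Step~2 by nearest-endpoint rounding, or finish the expectation bound explicitly as above.
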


\begin{proof}
If the two vectors in choice \(i\) are \(z_i^0,z_i^1\), the
constraints
\[
 0\le t_i\le1,\qquad
 z_*+\sum_i\bigl[z_i^0+t_i(z_i^1-z_i^0)\bigr]=r
\]
define a nonempty compact polytope, where \(z_*\) is the fixed
contribution. At an extreme point at most four coordinates lie
strictly between zero and one: five such coefficient vectors would
be linearly dependent and allow a small perturbation in both
directions. Round these coordinates to nearest endpoints.
The norm of the total error is at most \(4(1/2)2=4\).
\end{proof}

\begin{lemma}\label{lem:separated-tails}
Suppose the four distinguished components have matching cross-nonedges,
\(\omega(Q_i\cup R_j)\le W\), and \(a,b\le W\).
Assume that \(X_0\) is complete to \(R_1\cup R_2\), that
\(Y_0\) is complete to \(Q_1\cup Q_2\), and that
\[
 \chi(X_0)\le\min\{a,W-b\},\qquad
 \chi(Y_0)\le\min\{b,W-a\}.
\]
Then
\[
 \chi(X\cup Y)\le\min\{a+b,3W/2+2\}.
\]
\end{lemma}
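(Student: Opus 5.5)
Two bounds are needed, and I treat them separately.

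\emph{The bound $a+b$.} Use a palette $P$ of $a$ colours for the first side and a disjoint palette $P'$ of $b$ colours for the second side. $Q_1,Q_2$ are anticomplete cliques of order at most $a$, so both fit in $P$. The tail $X_0$ is anticomplete to $Q_1\cup Q_2$ and $\chi(X_0)\le a$, so $X_0$ also fits in $P$. The second side is handled the same way with $P'$. Both sides are cluster graphs, so this is proper and gives $\chi(X\cup Y)\le a+b$.

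\emph{The bound $3W/2+2$.} Here we may assume $a+b>W$; otherwise $a+b\le W\le 3W/2+2$.

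Set $m=a+b-W$. Apply Lemma~\ref{lem:padding} to pad the core to components $A,B,C,D$ of orders $a,a,b,b$, with each cross matching of size exactly $m$. Dummies and added edges only make colouring harder, so a colouring of the padded core restricts to one of the original core.

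Decompose the nonedge graph $\Gamma$ into paths and cycles following~\eqref{eq:four-labels}.
\begin{itemize}
\item Colour each cycle with at most $3n/8$ colours, leaving no loose vertices.
\item For each path, take the one or two partial colourings of Lemma~\ref{lem:path-table}.
\end{itemize}
Averaging the two choices gives total effective cost at most $3N/8$ plus $3/8$ per path. The mean loose count vector $r$ is half the endpoint count vector. Path endpoints in $A$ are exactly the vertices of $A$ of degree at most one in $\Gamma$. Counting degrees, the deficit $2a-\deg$ summed over $A$ is $2a-2m$, so the average loose count in $A$ is about $a-m=W-b=\rho_X$. Likewise $B$ has about $\rho_X$ and $C,D$ have about $\rho_Y$; each isolated vertex is counted twice as an endpoint, so it is fully loose.

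Apply Lemma~\ref{lem:round-four} to choose one colouring per path with loose vector $l$ and $\normone{l-r}\le4$. Then:
\begin{itemize}
\item Colour the loose vertices of $A\cup B$ from a pure first-side palette of size $\max(l_A,l_B)\le\rho_X+O(1)$; vertices of $A$ and $B$ can share colours because $A,B$ are anticomplete.
\item Colour the loose vertices of $C\cup D$ from a pure second-side palette of size about $\rho_Y$.
\end{itemize}

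The tails use these pure palettes. $X_0$ is complete to the second-side core and anticomplete to $Q_1\cup Q_2$, so it can reuse the first-side pure palette provided it also avoids the mixed colours on $C,D$. The mixed colours appear on both sides, so $X_0$ cannot reuse them; this forces a third palette. The cleaner scheme is to colour $X_0$ from the full first-side pure palette of size $\rho_X$, enlarged if needed, using $\chi(X_0)\le W-b$, and to treat $Y_0$ symmetrically. $X_0$ is complete to $Y_0$, but the palettes are disjoint, so there is no conflict. Colours in the $A\cup B$ pure palette used by $X_0$ clash only with $Q$ vertices, to which $X_0$ is anticomplete.

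Total count, grouping the pure-palette cost with the loose vertices:
\[
\frac{3N}{8}+\frac{3p}{8}-\frac{|l|}{2}+\max(l_A,l_B)+\max(l_C,l_D),
\]
where $p$ is the number of paths. Here $\max(l_A,l_B)$ is roughly $(l_A+l_B)/2$ plus the imbalance, and the rounding error of $4$ contributes about $+2$.

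The main obstacle is bounding $3N/8=3(a+b)/4$ together with the per-path $3/8$ terms by $3W/2+2$. Since $a,b\le W$, we have $3(a+b)/4\le 3W/2$. The per-path overhead is what I expect to be hardest: paths arise only from degree-deficient vertices, roughly $\rho_X+\rho_Y$ of them. I would try to absorb the $3/8$ per path into the half-cost of the loose endpoints already charged to the pure palettes, so that the total becomes $\max$ of the side demands plus $3m/2$ mixed colours. That rearranges to at most $3W/2+2$, the $+2$ coming from Lemma~\ref{lem:round-four}.
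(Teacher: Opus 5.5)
\textbf{There is a gap: the final count is left open.} Your construction is the paper's: padding by Lemma~\ref{lem:padding}, the partial path colourings of Lemma~\ref{lem:path-table}, rounding by Lemma~\ref{lem:round-four}, and pure palettes of sizes $\max\{l_A,l_B,\rho_X\}$ and $\max\{l_C,l_D,\rho_Y\}$ that also cover the tails. Your proof of the bound $a+b$ is correct, and so is the palette scheme you settle on. The detour through a ``third palette'' is unnecessary, and $X_0$ need not be complete to $Y_0$.

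What is missing is the decisive step, which you describe only as ``the main obstacle''. Your inequality $3(a+b)/4\le 3W/2$ leaves no room for the path overhead. The proposed fix also fails: the half-cost of the loose vertices is already inside each path's effective-cost bound $3n/8+3/8$, so charging the per-path $3/8$ to those vertices again would double count. The closing claim of ``side demands plus $3m/2$ mixed colours'' is asserted, not derived. Your path count is also too small: there are exactly $2(\rho_X+\rho_Y)$ paths, not roughly $\rho_X+\rho_Y$.

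The missing step is a one-line identity. Each path component of $\Gamma$, including an isolated vertex, has one more vertex than edges, and each cycle has equally many. Hence
\[
 p=N-E=2a+2b-4m=2(2W-a-b).
\]
Equivalently, the endpoint counts $2\rho_X,2\rho_X,2\rho_Y,2\rho_Y$ sum to $2p$. The two choices in each row of Table~\ref{tab:path-choices} have equal effective cost, so rounding leaves the total effective cost unchanged. That total is at most
\[
 \frac{3N}{8}+\frac{3p}{8}=\frac34(a+b)+\frac34(2W-a-b)=\frac32W.
\]
So a large value of $a+b$ is exactly offset by having few paths. Finally, apply $\max\{u,v,\rho\}-(u+v)/2\le(|u-\rho|+|v-\rho|)/2$ on each side. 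This shows the two pure palettes exceed $|l|/2$ by at most $\tfrac12\normone{l-r}\le2$. The total number of colours is therefore at most $3W/2+2$.
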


\begin{proof}
Separate palettes give \(a+b\). If \(a+b\le W\), this also
proves the other bound. Otherwise use Lemma~\ref{lem:padding}.
Each padded component has total \(\Gamma\)-degree \(2m\).
The endpoint counts in \(A,B\) are therefore \(2\rho_X\) each,
and those in \(C,D\) are \(2\rho_Y\) each.
Lemma~\ref{lem:path-table} gives mean total loose vector
\[
 r=(\rho_X,\rho_X,\rho_Y,\rho_Y).
\]
By Lemma~\ref{lem:round-four}, choose one partial colouring for each
path so that
\(\normone{l-r}\le4\).

Use a pure first-side palette of \(\max\{l_A,l_B,\rho_X\}\)
colours for the loose core vertices and all of \(X_0\).
Use a disjoint pure second-side palette of
\(\max\{l_C,l_D,\rho_Y\}\) for the other side.
Within each side the components are anticomplete.
The pure palettes are disjoint from the internal colours, so all
cross-edges are respected. Restrict the colouring to the original vertices.

For \(u,v,\rho\ge0\),
\begin{equation}\label{eq:pure-palette-error}
 \max\{u,v,\rho\}-\frac{u+v}{2}
 \le\frac{|u-\rho|+|v-\rho|}{2}.
\end{equation}
The number of colours in the two pure palettes therefore exceeds
half the total number of loose vertices by at most two. If \(p\) is the number of path
components, including isolates, then \(p=N-E\). The total cost is
at most
\[
 \frac{3N}{8}+\frac{3p}{8}+2
 =\frac34N-\frac38E+2
 =\frac32(a+b-m)+2=\frac32W+2.
\]
\end{proof}

\subsection{Tail incidences and exceptional vertices}

Now suppose \(G[X\cup Y]\in\Cclass\), with largest and second-largest
component orders
\[
 |Q_1|=a\ge |Q_2|=x,\qquad
 |R_1|=b\ge |R_2|=y.
\]
Missing components have order zero. As before, let \(X_0\) and
\(Y_0\) be the unions of the remaining components on the two sides.

\begin{lemma}\label{lem:anchor-bound}
Under the hypotheses of Theorem~\ref{thm:universal-pair},
\[
 \chi(X\cup Y)\le a+b,\qquad
 \chi(X\cup Y)\le W+\min\{x,y\}.
\]
\end{lemma}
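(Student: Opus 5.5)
The plan is to construct both colourings directly, using only the standing assumptions of this section: each side is a cluster graph, distinct components on one side are anticomplete, and cross-nonedges between opposite components form a matching.

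\emph{First bound.} Colour \(X\) and \(Y\) from disjoint palettes. A cluster graph needs exactly as many colours as its largest component has vertices. So \(X\) needs \(a\) colours and \(Y\) needs \(b\) colours, and together they use \(a+b\) colours.

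\emph{Second bound.} By symmetry between the two sides it suffices to prove \(\chi(X\cup Y)\le W+x\); exchanging the roles of \(X\) and \(Y\) then gives \(W+y\). I would show that \(G[Q_1\cup Y]\) is \(W\)-colourable and colour the rest of \(X\) separately.

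First colour \(Q_1\) with \(a\) distinct colours from a palette \(P\) of size \(W\); this is possible since \(a\le\omega\le W\). Now take a component \(R\) of \(Y\), say of order \(t\), and let \(M\) be the cross-nonedge matching between \(Q_1\) and \(R\), of size \(m\).
\begin{itemize}
\item Each vertex of \(R\) covered by \(M\) receives the colour of its unique nonneighbour in \(Q_1\). These colours are distinct because \(M\) is a matching. Such a vertex is adjacent to every other vertex of \(Q_1\), so it conflicts with no colour already used there.
\item The remaining \(t-m\) vertices of \(R\) are complete to \(Q_1\). They receive distinct colours from the \(W-a\) unused colours of \(P\).
\end{itemize}
This works by Lemma~\ref{lem:column-interface}: \(\omega(Q_1\cup R)\le W\) forces \(m\ge a+t-W\), that is, \(t-m\le W-a\). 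When \(a+t\le W\) the count is trivial, so we may take \(m\) to be the actual matching size in every case.

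Components of \(Y\) are pairwise anticomplete, so each can be coloured independently in this way from the same palette \(P\). This gives a proper \(W\)-colouring of \(Q_1\cup Y\).

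Finally, \(X\setminus Q_1=Q_2\cup X_0\) is a cluster graph whose largest component has order \(x\), because \(Q_2\) is the second-largest component of \(X\). Colour it with \(x\) fresh colours disjoint from \(P\). Then no edge between \(X\setminus Q_1\) and \(Q_1\cup Y\) is monochromatic, and the total is \(W+x\).

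\emph{Main obstacle.} The only real step is checking that the \(W-a\) spare colours suffice for every component \(R\) at once. This is exactly where the matching structure and the inequality \(m\ge s+t-W\) from Lemma~\ref{lem:column-interface} are used. The degenerate cases, where \(Q_1\) or \(Q_2\) is empty, are immediate.
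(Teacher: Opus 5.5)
Your proof is correct and is essentially the paper's argument. For the first bound you use disjoint palettes, and for the second an injective colouring of \(Q_1\) from a palette of size \(W\), whose colours each \(Y\)-component reuses along its nonedge matching, with the at most \(W-a\) unmatched vertices per component (by \(m\ge a+t-W\) from Lemma~\ref{lem:column-interface}) drawing on a shared extra palette, then \(x\) fresh colours for \(X\setminus Q_1\) and symmetry; you also spell out the properness checks the paper leaves implicit.
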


\begin{proof}
The first estimate follows by colouring the two cells with disjoint
palettes. For the second, colour
\(Q_1\) injectively. Every \(Y\)-component of order \(s\) has
at least \(a+s-W\) matching nonedges to \(Q_1\), so at most
\(W-a\) of its vertices cannot reuse the matched colours.
These vertices in different \(Y\)-components reuse a common extra
palette. Thus \(Q_1\cup Y\) uses at most \(W\) colours.
The remaining \(X\)-components use \(x\) fresh colours.
Reverse the roles of the sides for the other estimate.
\end{proof}

\begin{lemma}\label{lem:overlap-obstruction}
Let \(Q,Q',T\) be three distinct first-side components and
\(R,D\) two distinct second-side components. If \(u\in T\)
and \(r\in R\) are nonadjacent, at most three vertices of \(D\)
have a nonneighbour in both \(Q,Q'\). If \(u\) is complete to
\(D\), at most two do.
\end{lemma}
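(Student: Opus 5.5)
The plan is to exhibit an induced \(P_2\cup P_3\) whose edge is \(du\) for a suitable \(d\in D\) and whose path is \(q-r-q'\), where \(q\in Q\), \(q'\in Q'\) are the nonneighbours of \(d\). The hypothesis that \(u\) and \(r\) are nonadjacent is exactly what makes this configuration induced. We then count how many vertices \(d\) can escape the contradiction, using only the matching property of cross-nonedges between opposite components.

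Call \(d\in D\) \emph{bad} if it has a nonneighbour in both \(Q\) and \(Q'\). Cross-nonedges between \(D\) and \(Q\) form a matching, and likewise for \(D\) and \(Q'\). So each bad \(d\) has exactly one nonneighbour \(q_d\in Q\) and exactly one \(q'_d\in Q'\). Moreover, distinct bad vertices have distinct \(q_d\) and distinct \(q'_d\). By the same matching property, \(r\) has at most one nonneighbour \(q_r\in Q\) and at most one \(q'_r\in Q'\).

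Now take a bad \(d\) adjacent to \(u\), and suppose \(r\) is adjacent to both \(q_d\) and \(q'_d\). We check the pairs that must be nonadjacent.
\begin{itemize}
\item \(q_d q'_d\): the vertices lie in distinct components of the cluster graph on \(X\), so they are nonadjacent, and \(q_d-r-q'_d\) is an induced \(P_3\).
\item \(u\) versus \(q_d,q'_d\): \(u\in T\) lies in a third first-side component, so it is anticomplete to \(Q\cup Q'\).
\item \(ur\): this pair is nonadjacent by hypothesis.
\item \(dr\): \(D\) and \(R\) are distinct second-side components, so they are anticomplete.
\item \(d\) versus \(q_d,q'_d\): these are nonadjacent by the choice of \(q_d,q'_d\).
\end{itemize}
Hence the edge \(du\) is anticomplete to the path \(q_d-r-q'_d\), an induced \(P_2\cup P_3\), which is a contradiction. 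Therefore every bad \(d\) adjacent to \(u\) satisfies \(q_d=q_r\) or \(q'_d=q'_r\). By the injectivity of \(d\mapsto q_d\) and \(d\mapsto q'_d\), at most one bad \(d\) has \(q_d=q_r\) and at most one has \(q'_d=q'_r\). So at most two bad vertices are adjacent to \(u\).

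Finally, the cross-nonedges between \(T\) and \(D\) also form a matching, so \(u\) has at most one nonneighbour in \(D\). This gives at most three bad vertices in general. If \(u\) is complete to \(D\), every bad vertex is adjacent to \(u\), so there are at most two.

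The only delicate step is choosing the right obstruction, with the edge \(du\) and the path centred at \(r\) rather than at a vertex of \(D\). After that choice, the count is routine matching bookkeeping.
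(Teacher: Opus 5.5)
Your proposal is correct and follows essentially the same route as the paper. Both use the same obstruction, the edge \(du\) anticomplete to the induced path \(q_d-r-q'_d\), and the same count of excluded vertices: one for the nonneighbour of \(u\) in \(D\), and one each from the nonneighbours of \(r\) in \(Q\) and \(Q'\), using injectivity of \(d\mapsto q_d,q'_d\). The paper phrases the count as choosing a good vertex among four (or three) candidates, while you phrase it contrapositively.
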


\begin{proof}
For each such vertex \(z\), denote its nonneighbours by
\(q_z\in Q,q'_z\in Q'\). Both maps are injective.
Among four choices, at most one has \(uz\) a nonedge, at most
one has \(rq_z\) a nonedge, and at most one has \(rq'_z\)
a nonedge. Choose a vertex avoiding all three.
Then \(q_z-r-q'_z\) is an induced path anticomplete to the
edge \(uz\), a contradiction. If \(u\) is complete to \(D\),
the first exclusion is absent, so three choices already suffice.
\end{proof}

\begin{corollary}\label{cor:second-complete}
If \(a+b+\min\{x,y\}>2W+3\), then \(X_0\) is complete to
\(R_2\), and \(Y_0\) is complete to \(Q_2\). In particular,
\[
 \chi(X_0)\le W-y,\qquad \chi(Y_0)\le W-x.
\]
\end{corollary}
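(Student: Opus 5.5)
The plan is to apply Lemma~\ref{lem:overlap-obstruction} with \(D\) a largest component on the opposite side, and to lower-bound the number of vertices of \(D\) that have nonneighbours in both \(Q_1\) and \(Q_2\) (respectively \(R_1\) and \(R_2\)) by counting matching nonedges. First note that the hypothesis forces all four distinguished components to be nonempty. Indeed \(a,b\le W\), so \(a+b+\min\{x,y\}>2W+3\) gives \(\min\{x,y\}\ge4\). If \(X_0\) or \(Y_0\) is empty, the corresponding completeness claim is vacuous.

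Suppose, for contradiction, that some \(u\in X_0\) is nonadjacent to some \(r\in R_2\). Let \(T\) be the component of \(u\), and apply Lemma~\ref{lem:overlap-obstruction} with \(Q=Q_1\), \(Q'=Q_2\), \(R=R_2\) and \(D=R_1\). These are distinct components on the appropriate sides. The lemma then says that at most three vertices of \(R_1\) have a nonneighbour in both \(Q_1\) and \(Q_2\).

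On the other hand, Lemma~\ref{lem:column-interface}, in its general form (cross-nonedges form a matching, of size at least \(s+t-W\) when the union has clique number at most \(W\)), gives the reverse bound. At least \(a+b-W\) vertices of \(R_1\) are matched to nonneighbours in \(Q_1\), and at least \(x+b-W\) to nonneighbours in \(Q_2\). Since \(|R_1|=b\), inclusion--exclusion shows that at least
\[
(a+b-W)+(x+b-W)-b=a+b+x-2W
\]
vertices of \(R_1\) have nonneighbours in both. Hence \(a+b+x\le 2W+3\). Because \(\min\{x,y\}\le x\), this contradicts the hypothesis. The symmetric argument applies to \(u\in Y_0\) nonadjacent to \(q\in Q_2\): use \(R_1,R_2\) as the pair, \(D=Q_1\) and \(R=Q_2\). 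It yields \(a+b+y\le 2W+3\), which again contradicts the hypothesis since \(\min\{x,y\}\le y\).

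For the ``in particular'' bounds, take any component \(T\) of \(X_0\). Since \(T\) is complete to the clique \(R_2\), the set \(T\cup R_2\) is a clique, so \(|T|+y\le\omega(X\cup Y)\le W\). As \(X_0\) is a cluster graph, \(\chi(X_0)\) equals its largest component order, which is at most \(W-y\). The bound \(\chi(Y_0)\le W-x\) follows in the same way.

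The only step needing care is the counting step. There one must check that the matching bounds from Lemma~\ref{lem:column-interface} apply to the pairs \(Q_1,R_1\) and \(Q_2,R_1\) with the ambient bound \(W\), which holds since \(\omega(X\cup Y)\le W\). One must also check that the lemma's component-distinctness hypotheses hold; this uses \(x,y\ge 1\) and \(T\ne Q_1,Q_2\).
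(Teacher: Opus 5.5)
Your proposal is correct and follows the paper's proof essentially step for step. You use the same inclusion--exclusion count, $(a+b-W)+(x+b-W)-b=a+b+x-2W>3$, of vertices of $R_1$ with nonneighbours in both $Q_1,Q_2$, which contradicts Lemma~\ref{lem:overlap-obstruction} with $R=R_2$, $D=R_1$; the symmetric count in $Q_1$; and the clique cap for the tail bounds. Your check that $\min\{x,y\}\ge4$, so the components fed to the lemma are nonempty and distinct, is a detail the paper leaves implicit.
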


\begin{proof}
The domains in \(R_1\) of the matchings to \(Q_1,Q_2\)
overlap in at least
\[
 (a+b-W)+(x+b-W)-b=a+x+b-2W>3
\]
vertices. A nonedge from \(X_0\) to \(R_2\) contradicts
Lemma~\ref{lem:overlap-obstruction}, with \(R=R_2,D=R_1\).
The reverse assertion uses the corresponding overlap in \(Q_1\).
The tail bounds follow from the clique cap and the cluster property.
\end{proof}

\begin{lemma}\label{lem:one-sided-exceptions}
Let one side \(Y\) consist of a distinguished component \(R_1\),
a second component \(D\), and a cluster remainder \(Y_0\).
Let \(a\) be the largest component order of \(X\).
Suppose an exceptional set \(E\subseteq D\), of order \(r\le2\),
has the following properties:
\[
 |D\setminus E|>W/2,\qquad \chi(Y_0)\le W-a,
\]
and each vertex of \(D\setminus E\) has at most one nonneighbour
in the whole of \(X\). Under the matching and clique-cap assumptions,
\[
 \chi(X\cup Y)\le W+\left\lceil\frac{W+r}{2}\right\rceil
 \le\left\lfloor\frac{3W}{2}\right\rfloor+2.
\]
\end{lemma}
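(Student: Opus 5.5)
The plan is to split the colouring into two stages. Stage one colours one part of \(X\cup Y\) with \(W\) colours, using the anchor argument of Lemma~\ref{lem:anchor-bound}. Stage two handles what remains, namely \(D\) and the parts of \(X\) that must interact with it, with a fresh palette of \(\lceil (W+r)/2\rceil\) colours. The second displayed inequality is just arithmetic: since \(r\le2\), we have \(\lceil (W+r)/2\rceil\le \lfloor W/2\rfloor+2\).

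\emph{Stage one.} Take \(R_1\) as the anchor and colour it injectively. Each \(X\)-component of order \(s\) has at least \(s+|R_1|-W\) matching nonedges to \(R_1\) by Lemma~\ref{lem:column-interface}, so its matched vertices reuse the colours of their partners. The remaining at most \(W-|R_1|\) vertices of each \(X\)-component go into a common extra palette; these components are pairwise anticomplete, so sharing it is safe. This colours \(X\cup R_1\) with at most \(W\) colours. I would then try to place \(Y_0\) inside the same \(W\) colours as well. The reason this seems possible is that \(Y_0\) is anticomplete to \(R_1\) and \(\chi(Y_0)\le W-a\). If fitting \(Y_0\) in fails, the fallback is the symmetric form: anchor on the largest \(X\)-component \(Q_1\) of order \(a\), colour \(Q_1\cup R_1\cup Y_0\) with \(W\) colours, and let \(Y_0\) live in the extra palette of size \(W-a\). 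The rest of \(X\) is then deferred to stage two.

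\emph{Stage two.} Give \(D\) (together with any deferred \(X\)-components) a fresh palette, but let vertices of \(D\setminus E\) recycle colours from \(X\). The key fact is this: a vertex \(d\in D\setminus E\) with its unique nonneighbour \(u\in X\) may take \(u\)'s colour whenever that colour class meets \(X\) only in \(u\). That class then contains no neighbour of \(d\) on the \(Y\) side, because components of \(Y\) are anticomplete, and \(d\) is adjacent to every vertex of \(X\) other than \(u\).

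Distinct vertices of \(D\) that are nonadjacent to the same \(X\)-component have distinct partners, because the nonedges form a matching. So I would aim to make about half of \(D\setminus E\) share colours, for instance by pairing vertices of \(D\setminus E\) with partners that lie in injectively coloured (anchor-type) colour classes. The remaining vertices of \(D\), including the \(r\) exceptional ones, then take fresh colours. The count should come out as \(|D|-\lfloor (|D\setminus E|-?)/2\rfloor\). Using \(|D|\le W\) and \(|D\setminus E|>W/2\), this should be at most \(\lceil (W+r)/2\rceil\).

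The main obstacle is arranging stage one so that enough colour classes meet \(X\) in a single vertex and are available to \(D\setminus E\). The extra palette used by several \(X\)-components is useless for \(D\), so I expect to have to re-choose the anchor, or to colour \(D\) first and let \(X\) reuse \(D\)'s colours. I would first try the latter order, since it is exactly the reversed-roles version of Lemma~\ref{lem:anchor-bound}. In that version, colour \(D\) injectively; each \(X\)-component reuses the colours of its matched partners in \(D\), and at most \(W-|D|\) vertices per \(X\)-component need an extra palette. I would then show that the vertices of \(D\setminus E\) having no nonneighbour in \(X\) are adjacent to everything in \(X\). This would allow roughly half of them to be merged with \(R_1\cup Y_0\) colours instead, and the hypothesis \(|D\setminus E|>W/2\) is what makes this merging save the required \(\lfloor (W-r)/2\rfloor\) colours.
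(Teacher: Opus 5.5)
There is a genuine gap: the proposal never actually colours \(D\). The obstacle you flag at the end is exactly where the argument fails, and your final count still contains an undetermined term.

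If \(X\cup R_1\) is coloured with \(W\) colours as in Lemma~\ref{lem:anchor-bound}, each anchor colour, and each colour of the shared extra palette, may appear once in every \(X\)-component. A colour class then need not meet \(X\) in a single vertex, so vertices of \(D\setminus E\) may be unable to recycle anything. A vertex of \(D\) complete to \(X\) can use no colour occurring on \(X\) at all. Since \(|D|\) may equal \(W\), the \(h=\lceil(W+r)/2\rceil\) fresh colours cannot be expected to suffice.

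Your other routes do not close this. Making \(D\) the anchor only transfers the difficulty to \(R_1\), which may also have order \(W\) and must avoid the colours on its \(X\)-neighbours. Merging vertices of \(D\) into the colours of \(R_1\cup Y_0\) gains nothing, because \(D\) is anticomplete to \(R_1\cup Y_0\) anyway; every constraint on \(D\) comes from \(X\). Separately, putting \(Y_0\) inside the \(W\) colours already used on \(X\) is unjustified, since \(Y_0\) may see \(X\)-vertices of every colour.

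The missing idea is to put \(X\), not \(D\), mostly on the extra palette.
\begin{itemize}
\item If \(a\le h\), separate palettes suffice.
\item Otherwise pad \(R_1\) to order \(W\), colour it injectively, and extend its matching to cover every \(X\)-component. Colour \(X\) mostly from a pure palette of \(h\) colours: a component of order \(s_i>h\) takes matched anchor colours on only \(l_i=s_i-h\) vertices.
\item Choose these \(l_i\) vertices among those matched into \(D\setminus E\). This is possible because that matching has at least \(s_i+d-W\ge s_i-h\) edges, where \(d=|D\setminus E|\).
\item Each vertex of \(D\setminus E\) has at most one nonneighbour in \(X\), so the matching domains of different components are disjoint. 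This gives \(P\le d-q(h+d-W)\), where \(P\) is the number of anchor colours occupied by the \(q\) components involved.
\item If \(q\ge2\), then \(P\le 2W-2h-d\le W-|D|\), and \(D\) fits into unoccupied anchor colours.
\item If \(q=1\), each occupied colour meets \(X\) in exactly one vertex. Your recycling observation then lets the matched vertices of \(D\setminus E\) reuse those colours, and the rest of \(D\) takes unoccupied ones.
\item In every case \(X\) occupies at most \(a\) anchor colours, so \(Y_0\) also fits among the unoccupied ones. The total is \(W+h\).
\end{itemize}
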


\begin{proof}
Put \(d=|D\setminus E|\) and \(h=\lceil(W+r)/2\rceil\).
If \(a\le h\), separate palettes suffice. Otherwise pad \(R_1\)
to order \(W\) and extend its matching to every \(X\)-component
so that it covers that component. At most \(W-|R_1|\) vertices
of each component were unmatched. They can be matched to the
new dummies; the same dummies can be used independently for different
components. No original edge is deleted. Give the padded anchor
\(W\) distinct colours.

From each \(X\)-component of order \(s_i>h\), retain
\(l_i=s_i-h\) vertices matched to \(D\setminus E\).
There are enough: the cross-nonedge matching between this component
and \(D\setminus E\) has at least
\[
  s_i+d-W\ge s_i-h
\]
edges.
Give the retained vertices their matched anchor colours and give all
other \(X\)-vertices a pure \(h\)-colour palette. Every component
has at most \(h\) vertices using this palette.

Let \(q\) components retain vertices, and put \(P=\sum_i l_i\).
Their matching domains in \(D\setminus E\) are disjoint, so
\[
  P\le d-q(h+d-W).
\]
If \(q\ge2\), then
\[
  P\le2W-2h-d\le W-r-d=W-|D|.
\]
At most \(P\) anchor colours are occupied by \(X\), leaving at
least \(|D|\) pure anchor colours for the whole of \(D\).
If \(q=1\), the \(P\) occupied colours are distinct within
that component. The \(P\) distinct vertices of \(D\setminus E\)
matched to them reuse those colours, and the other vertices of
\(D\) use unoccupied anchor colours. There are enough since
\(|D|\le W\). If \(q=0\), use pure anchor colours.

At least \(W-a\) anchor colours remain unoccupied by \(X\).
For \(q=1\), at most \(a-h\) were occupied. For \(q\ge2\),
at most \(P\le W-|D|<W/2<a\) were occupied. The zero case
is immediate. These pure anchor colours also cover \(Y_0\), and
different \(Y\)-components are anticomplete. The total is \(W+h\).
The final inequality follows by the two parities of \(W\), using
\(r\le2\).
\end{proof}

\begin{lemma}\label{lem:two-sided-exceptions}
Let the original second-largest component orders be \(x_0,y_0\). Suppose
there are sets \(E_X\subseteq Q_2,E_Y\subseteq R_2\), of sizes
\(r_X,r_Y\le2\), such that the remaining components have orders
satisfying
\[
 x=x_0-r_X\ge W/2,\qquad y=y_0-r_Y\ge W/2.
\]
Suppose no vertex of \(R_2\setminus E_Y\) has nonneighbours in
both \(Q_1,Q_2\setminus E_X\), and no vertex of
\(Q_2\setminus E_X\) has nonneighbours in both
\(R_1,R_2\setminus E_Y\). If
\[
 a\ge x_0,\quad b\ge y_0,\quad
 \chi(X_0)\le W-y_0,\quad \chi(Y_0)\le W-x_0,
\]
then
\[
 \chi(X\cup Y)\le3W/2+\max\{r_X,r_Y\}.
\]
\end{lemma}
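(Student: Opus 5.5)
The plan is to set the exceptional vertices aside, colour the reduced configuration with at most \(3W/2\) colours, and then put \(E_X\cup E_Y\) back at a cost of at most \(\max\{r_X,r_Y\}\) extra colours. Write \(Q_2'=Q_2\setminus E_X\) and \(R_2'=R_2\setminus E_Y\), of orders \(x,y\ge W/2\).

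\emph{Structural step.} First I translate the hypotheses into degree information. No vertex of \(Q_2'\) has nonneighbours in both \(R_1\) and \(R_2'\), and each cross-nonedge set between two components is a matching by Lemma~\ref{lem:column-interface}. Hence every vertex of \(Q_2'\) has at most one nonneighbour in \(R_1\cup R_2'\), and symmetrically for \(R_2'\). So the matchings \(Q_2'\)--\(R_1\) and \(Q_2'\)--\(R_2'\) have disjoint domains in \(Q_2'\). By the clique cap they have at least \(x+b-W\) and \(x+y-W\) edges. Therefore
\[
 (x+b-W)+(x+y-W)\le x,\qquad\text{so}\qquad x+y+b\le 2W,
\]
and symmetrically \(x+y+a\le2W\). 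Since \(x,y\ge W/2\), this gives \(a,b\le 3W/2-\ldots\); more usefully,
\[
 a\le 2W-x-y\le W,\qquad b\le 2W-x-y\le W.
\]
Thus \(a\) and \(b\) are controlled by the slack \(2W-x-y\).

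\emph{Colouring the reduced configuration.} I colour \(Q_1\cup Y'\), where \(Y'=Y\setminus E_Y\), exactly as in the proof of Lemma~\ref{lem:anchor-bound}. \(Q_1\) is coloured injectively. Each \(Y\)-component reuses the colours of its matched \(Q_1\)-partners, and a common extra palette of size \(W-a\) covers the unmatched vertices. This uses \(W\) colours in total.

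Next, \(Q_2'\) is coloured in two parts:
\begin{itemize}
\item vertices matched to \(R_1\) reuse the colour of their \(R_1\)-partner, where possible;
\item the rest of \(Q_2'\) receives a fresh palette.
\end{itemize}
The fresh palette has size at most \(x-(x+b-W)=W-b\le W/2\), using \(b\ge y\ge W/2\). The tail \(X_0\) also uses this fresh palette together with unused colours. This is allowed because \(\chi(X_0)\le W-y_0\le W-y\), and because distinct \(X\)-components are anticomplete.

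The reuse step needs a check. A colour of \(R_1\) may also sit on a vertex of \(R_2'\) or \(Y_0\) that is adjacent to the \(Q_2'\)-vertex. Here I use the degree-one property: that \(Q_2'\)-vertex is complete to \(R_2'\) apart from the single matched partner, and it is complete to \(R_1\) apart from its partner. So I only need to arrange that an \(R_1\)-colour reused on \(Q_2'\) does not appear on \(R_2'\) or on the relevant \(Y_0\)-vertices. I would do this by letting \(R_2'\) take its colours through its own matching to \(Q_1\), choosing the injective colouring of \(Q_1\) so that the two colour sets are separated. The count of \(R_1\)-colours available for reuse is then bounded below by \((x+b-W)\) minus the overlap forced by \(y\). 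The slack \(2W-x-y\ge \max\{a,b\}\) is what should make the total at most \(W+W/2\).

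\emph{Main obstacle and exceptional vertices.} I expect the hardest step to be this bookkeeping: making the reuse of \(R_1\)-colours on \(Q_2'\) compatible with the colours already placed on \(R_2'\) and \(Y_0\). If the direct count fails, my fallback is to pad as in Lemma~\ref{lem:padding}. The degree-one property then makes every path of \(\Gamma\) through \(Q_2'\) or \(R_2'\) short, and Lemma~\ref{lem:separated-tails}-style path colouring can be run with the tails absorbed by the pure palettes of sizes \(W-y_0\) and \(W-x_0\).

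Finally, \(E_X\) and \(E_Y\) are cliques of sizes \(r_X,r_Y\le2\), and I give them a fresh palette of \(\max\{r_X,r_Y\}\) colours. Vertices of \(E_X\) are paired with distinct vertices of \(E_Y\), and paired vertices share a colour when they are nonadjacent. If an adjacent pair is forced, I expect to recover the colour from the strict inequalities, which leave at least half a colour of slack when \(x\) or \(y\) exceeds \(W/2\). This step is also delicate, and I would check the case \(x=y=W/2\) separately.
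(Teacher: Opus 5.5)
Your structural step is correct and is the one the paper uses. With \(m=x+y-W\), \(\Delta_a=a-x\), \(\Delta_b=b-y\), the disjoint domains give \(2m+\Delta_b\le x\) and \(2m+\Delta_a\le y\). The gap is in the colouring.

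\textbf{The reuse step.} Once \(Q_1\cup Y'\) is coloured as in Lemma~\ref{lem:anchor-bound}, each \(Q_1\)-colour sits on its partner in every \(Y\)-component. A vertex \(u\in Q_2'\) is adjacent to all of \(R_2'\). So \(u\) can take the colour of its partner \(r\in R_1\) only if no vertex of \(R_2'\) carries that colour, and no vertex of \(Y_0\) adjacent to \(u\) carries it either. Which \(Y\)-vertices share a colour is fixed by the matchings, so permuting the colours of \(Q_1\) cannot separate anything.

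For a concrete case, take \(W=10\) and cliques \(Q_1=\{q_1,\dots,q_8\}\), \(R_1=\{r_1,\dots,r_8\}\), \(Q_2=\{u_1,\dots,u_6\}\), \(R_2=\{v_1,\dots,v_6\}\). Let the cross-nonedges be exactly \(q_ir_i\) \((i\le6)\), \(q_iv_i\) and \(u_ir_i\) \((i\le4)\), and \(u_5v_5,u_6v_6\), with no tails or exceptions. Every opposite pair has clique number \(10\), and all hypotheses hold. The graph is \((P_2\cup P_3)\)-free: any edge has at most two anticomplete vertices, or a clique of them.

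In this graph \(r_i\) and \(v_i\) both inherit the colour of \(q_i\), and the two extra colours are taken by \(v_5,v_6\). So no vertex of \(Q_2\) can reuse its partner's colour, and your rule costs \(W+x=16>15\). The colours of \(q_7,q_8\) happen to be free here, but your plan neither uses such colours nor guarantees them.

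Your bound \(W-b\) on the fresh palette presupposes full reuse. That palette is also too small for \(X_0\), which may need \(W-y_0>W-b\) colours. The fallback does not rescue this. Padding plus Lemma~\ref{lem:separated-tails} loses the additive \(2\) from Lemma~\ref{lem:round-four} before the exceptions are even restored. Its pure palettes have size about \(W-b\), again too small for such tails.

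\textbf{The exceptional vertices.} Your treatment of \(E_X\cup E_Y\) also fails. Cross-nonedges between \(Q_2\) and \(R_2\) form a matching, so with \(r_X=r_Y=2\) the set \(E_X\cup E_Y\) may be a \(K_4\). A fresh palette of \(\max\{r_X,r_Y\}=2\) colours cannot colour it. There need be no slack to recover from, for example when \(x=y=W/2\) and \(a=b=W\).

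\textbf{The missing idea.} Use \emph{mixed} colours, each placed on a single nonadjacent cross pair, instead of spreading \(Q_1\)'s colours over whole components. The paper does the following.
\begin{enumerate}
\item Choose \(m\) nonedges in each of the \(Q_1\)--\(R_2'\) and \(Q_2'\)--\(R_1\) matchings.
\item Add \(t=\min\{(d-m)_+,\Delta_a,\Delta_b\}\) further nonedges of the \(Q_1\)--\(R_1\) matching avoiding them, where \(d=\Delta_a+\Delta_b\).
\item Give each chosen pair its own colour, and cover everything else by pure palettes of sizes \(a-m-t\) and \(b-m-t\).
\end{enumerate}
These pure palettes hold the tails, since \(x-m=W-y\ge W-y_0\), and no conflict analysis is needed. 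The cost \(a+b-t\) is at most \(3W/2\) by a short case analysis using the two inequalities above and their sum \(3m+d\le W\).

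The exceptions are absorbed by lowering \(t\) to \(t'=\min\{t,\Delta_a-r_X,\Delta_b-r_Y\}\). Dissolving one mixed pair costs one colour but enlarges both pure palettes at once. This is why the loss is \(\max\{r_X,r_Y\}\) rather than \(r_X+r_Y\).
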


\begin{proof}
Put
\[
 m=x+y-W,\qquad \Delta_a=a-x,\qquad
 \Delta_b=b-y,\qquad d=\Delta_a+\Delta_b.
\]
The four nonedge matchings, in the order
\(Q_1R_1,Q_1(R_2\setminus E_Y),(Q_2\setminus E_X)R_1,
(Q_2\setminus E_X)(R_2\setminus E_Y)\), have sizes at least
\[
  m+d,\quad m+\Delta_a,\quad m+\Delta_b,\quad m.
\]
Disjointness of the respective domains gives
\begin{equation}\label{eq:disjoint-domain-constraints}
 2m+\Delta_a\le y,\qquad
 2m+\Delta_b\le x,\qquad 3m+d\le W.
\end{equation}
Choose \(m\) nonedges in each of the middle two pairs.
These choices make at most \(2m\) edges of the \(Q_1R_1\) matching
unavailable, leaving at least \((d-m)_+\) available nonedges there. Put
\[
 t=\min\{(d-m)_+,\Delta_a,\Delta_b\}.
\]
For the graph with the exceptions removed, give the \(2m+t\)
chosen nonedge pairs different mixed colours. Pure palettes of
sizes \(a-m-t\) and \(b-m-t\) cover the remaining components
and tails. The cost is \(a+b-t=W+m+d-t\).

We verify \(a+b-t\le3W/2\). If
\(\max\{\Delta_a,\Delta_b\}\ge m\), then
\(t=\min\{\Delta_a,\Delta_b\}\). If the maximum is
\(\Delta_a\), the cost is at most
\[
 W+m+\Delta_a\le W+y-m=2W-x\le3W/2;
\]
the other case is symmetric. If both differences are less than
\(m\) and \(d\ge m\), then \(t=d-m\), and
\(4m\le3m+d\le W\) gives the bound. If \(d<m\), then \(t=0\).
For \(m\le W/4\), use \(W+m+d\le W+2m\).
For \(m>W/4\), use
\(W+m+d\le2W-2m<3W/2\).

To put the exceptions back, replace \(t\) by
\[
 t'=\min\{t,\Delta_a-r_X,\Delta_b-r_Y\}.
\]
It is nonnegative and at least \(t-\max\{r_X,r_Y\}\).
Keep the same \(2m\) mixed pairs and only \(t'\) additional ones.
The pure first-side palette has size
\[
 a-m-t'\ge x+r_X-m=x_0-m;
\]
the second-side inequality is symmetric. These palettes cover the
original second components, including their exceptions. They also
cover the tails since
\[
 x_0-m=W-y+r_X\ge W-y_0,\qquad
 y_0-m=W-x+r_Y\ge W-x_0.
\]
The total is \(a+b-t'\le3W/2+\max\{r_X,r_Y\}\).
\end{proof}

\begin{proof}[Proof of Theorem~\ref{thm:universal-pair}]
Lemma~\ref{lem:anchor-bound} settles the assertion unless
\[
 a+b>3W/2+2,\qquad x,y>W/2+2.
\]
Corollary~\ref{cor:second-complete} applies, so \(X_0\) is
complete to \(R_2\), \(Y_0\) is complete to \(Q_2\), and
their chromatic numbers are at most \(W-y,W-x\).

If \(X_0\) is also complete to \(R_1\), and \(Y_0\) to
\(Q_1\), then the clique cap gives the stronger tail bounds
\(W-b,W-a\). Lemma~\ref{lem:separated-tails} applies.

Suppose exactly one of these additional completeness properties
fails, say there is a nonedge \(ur\) with \(u\in X_0,r\in R_1\).
Since \(u\) is complete to \(R_2\),
Lemma~\ref{lem:overlap-obstruction} gives at most two vertices
of \(R_2\) having nonneighbours in both \(Q_1,Q_2\).
Delete them temporarily as an exceptional set \(E\).
Every remaining \(R_2\)-vertex has at most one nonneighbour in
the whole of \(X\), and more than \(W/2\) such vertices remain.
The other tail is complete to \(Q_1\), so its cost is at most
\(W-a\). Lemma~\ref{lem:one-sided-exceptions} proves the bound.

Finally suppose both additional completeness properties fail.
The same obstruction gives sets \(E_X\subseteq Q_2,E_Y\subseteq R_2\)
of at most two vertices each, after whose deletion the two
disjoint-domain hypotheses hold. Both surviving second components
have order greater than \(W/2\).
Lemma~\ref{lem:two-sided-exceptions} gives \(3W/2+2\).
Taking the integer part completes the proof.
\end{proof}

\section{The explicit one-eighth bound}
\label{sec:explicit}

\begin{proof}[Proof of Theorem~\ref{thm:one-eighth}]
For a same-column pair of width \(w\),
Lemma~\ref{lem:column-interface} and
Theorem~\ref{thm:universal-pair} give a colouring with
\(\lfloor3(w+1)/2\rfloor+2\) colours. Separate palettes use
at most \(2w\). Since
\[
 \left\lfloor\frac{3(w+1)}2\right\rfloor
 +\left\lfloor\frac w2\right\rfloor=2w+1,
\]
the guaranteed saving for a pair is
\[
 r_w=\max\left\{0,\left\lfloor\frac w2\right\rfloor-3\right\}.
\]
Column \(j\) has \(n=j-1\) cells of width \(w=k-n+1\).
Pair the cells arbitrarily, leaving one unpaired cell when \(n\) is odd.
Assign different palettes to all pairs, unpaired cells and independent
sets \(\{a_i\}\cup I_i\). The saving from Corollary~\ref{cor:baseline}
is at least
\begin{equation}\label{eq:saving-convolution}
 \sum_{n=2}^{k-7}
 \left\lfloor\frac n2\right\rfloor
 \left\lfloor\frac{k-n-5}{2}\right\rfloor.
\end{equation}
At \(k=8\) this is an empty sum.

Put \(L=k-5\). Adding zero end terms extends the sum to
\(\sum_{n=0}^{L}\lfloor n/2\rfloor\lfloor(L-n)/2\rfloor\).
If \(L=2t\), its even and odd parts give
\[
 \sum_{s=0}^{t}s(t-s)+\sum_{s=0}^{t-1}s(t-1-s)
 =\frac{t(t-1)(2t-1)}6.
\]
If \(L=2t+1\), both parity sums equal the first sum above, so their
total is \(t(t-1)(t+1)/3\). These are exactly the two formulas
for \(S(k)\) in~\eqref{eq:parity-saving}.
Subtracting from the baseline proves \(\chi(G)\le\Ueight(k)\).
Expanding the even formula gives
\[
 E_8(k):=\frac{k^3}{8}+\frac{5k^2}{4}-4k+8.
\]
The odd formula is \(E_8(k)-(k-6)/8\), proving the rest.
\end{proof}

\begin{proof}[Proof of Corollary~\ref{cor:three-twentieths}]
The difference between \(3k^3/20\) and \(E_8(k)\) is
\[
 \frac{k^3-50k^2+160k-320}{40}.
\]
Writing \(k=47+t\), its numerator becomes
\(t^3+91t^2+2087t+573>0\) for \(t\ge0\).
\end{proof}

\section{Independent-set entropy and column remainders}
\label{sec:entropy}

Put \(H_0=\overline{P_2\cup P_3}\). For a graph \(T\), let
\(Z(T)\) be its number of independent sets, including the empty
set, and let \(\mu(T)\) be the mean size of a uniformly chosen
independent set. Corollary~\ref{cor:baseline}, applied to
\(\overline T\), gives
\begin{equation}\label{eq:entropy-baseline}
 \theta(T)\le(\alpha(T)+2)^3
 \qquad(T\text{ is }H_0\text{-free}).
\end{equation}

\subsection{Entropy and mean independent-set size}

\begin{lemma}\label{lem:entropy}
Let \(T\) be \(H_0\)-free with \(\omega(T)\le r\), where
\(r\ge1\), and put \(z=\ln Z(T)\). Then
\[
 \mu(T)\ge f_r(z):=
 \frac{z}{\ln(54er)+3\ln(1+z)}.
\]
The function \(f_r\) is increasing on \([0,\infty)\).
\end{lemma}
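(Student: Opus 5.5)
The plan is to compare the entropy of a uniformly random independent set with its mean size, and to bound the order of \(T\) using \eqref{eq:entropy-baseline}. Monotonicity of \(f_r\) is elementary and I would dispose of it first. Write \(c=\ln(54er)\ge\ln(54e)>3\). Then
\[
 \frac{d}{dz}\,\frac{z}{c+3\ln(1+z)}
 =\frac{c+3\ln(1+z)-3z/(1+z)}{(c+3\ln(1+z))^2}>0,
\]
because \(3z/(1+z)<3<c\).

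\textbf{Order bound.} Let \(n=|V(T)|\) and \(\alpha=\alpha(T)\). Partitioning \(V(T)\) into \(\theta(T)\) cliques of order at most \(r\), and using \eqref{eq:entropy-baseline}, gives
\[
 n\le r\,\theta(T)\le r(\alpha+2)^3 .
\]
Every subset of a maximum independent set is independent, so \(Z(T)\ge2^\alpha\), that is, \(\alpha\le z/\ln2\). Hence \(n\) is at most \(r\) times a cube of a linear function of \(z\). This is where the \(\ln r\) term and the factor \(3\ln(1+z)\) in the denominator of \(f_r\) come from. The constant \(54=2\cdot27\) should absorb the conversion from \(\alpha+2\) to a multiple of \(1+z\), together with the stray factors below.

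\textbf{Entropy decomposition.} Let \(I\) be uniform among the independent sets, so \(H(I)=z\), and let \(s=|I|\) with \(\mathbb E s=\mu\). Write \(N_s\) for the number of independent sets of size \(s\). By the chain rule,
\[
 z=H(I)\le H(s)+\mathbb E\ln N_s .
\]
For the second term, use \(N_s\le\binom ns\le(en/s)^s\); concavity of \(x\mapsto x\ln(en/x)\) then gives \(\mathbb E\ln N_s\le\mu\ln(en/\mu)\). For the first term, the geometric distribution maximises entropy among nonnegative integer variables of mean \(\mu\), so
\[
 H(s)\le\mu\ln(1+1/\mu)+\ln(1+\mu)\le\mu\ln\bigl(e(1+\mu)/\mu\bigr),
\]
using \(\ln(1+\mu)\le\mu\). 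Combining the two estimates,
\[
 z\le\mu\,\ln\!\Bigl(\frac{e^2 n(1+\mu)}{\mu^2}\Bigr).
\]

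\textbf{Main obstacle.} The hard step is eliminating the \(\mu\)-dependence inside the logarithm so that it is controlled by \(\ln(54er)+3\ln(1+z)\). The \(1/\mu^2\) factor is harmful when \(\mu\) is small, and I would try to cancel it against the \(en/s\) estimate. One option is to use the cruder bound \(\binom ns\le n^s\) when \(\mu\) is small. Another is to argue by contradiction: assume \(\mu<f_r(z)\), use \(\mu\le\alpha\le z/\ln 2\) to bound \(1+\mu\), and use the assumed upper bound on \(\mu\) to make the right-hand side increasing in \(\mu\). If the constants do not close with this bookkeeping, I would replace the pointwise bound \(\binom ns\le(en/s)^s\) by the direct estimate
\[
 Z\le\sum_{s\le\alpha}\binom ns\le(\alpha+1)n^\alpha/\alpha!
\]
on the typical range of sizes. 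Either way, the final step is to expand \(n\le r(\alpha+2)^3\) with \(\alpha\le z/\ln2\) and verify that the constant is at most \(54e\). I expect this constant chase to be the delicate part.
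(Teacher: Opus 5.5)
Your monotonicity computation and the order bound \(n\le r(\alpha+2)^3\) with \(\alpha\le z/\ln2\) are exactly the paper's. The inequality you derive, \(z\le\mu\ln\bigl(e^2n(1+\mu)/\mu^2\bigr)\), is also valid. The gap is that you stop there: you leave the elimination of \(\mu\) from the logarithm unresolved, and the bookkeeping you sketch cannot resolve it. With \(n\le r(\alpha+2)^3\le27r(1+z)^3\), comparing the arguments of the logarithms with \(54er(1+z)^3\) requires \(e(1+\mu)/\mu^2\le2\). This is false for every \(\mu\le2\), so \(54=2\cdot27\) does not absorb your stray factors. The loss comes from the extra term \(H(s)\) produced by conditioning on \(|I|\). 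Your fallbacks do not repair this. The bound \(Z\le(\alpha+1)n^\alpha/\alpha!\) contains no information about \(\mu\). An upper bound on \(1+\mu\) does nothing about the factor \(1/\mu^2\), which needs a lower bound on \(\mu\).

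The paper closes the argument with two ingredients. First, it uses the sharper inequality \(\ln Z(T)\le\mu\ln(en/\mu)\), taken from the proof of Lemma~19 of Davies, Kang, Pirot and Sereni. You can prove it yourself by subadditivity over vertex indicators instead of the size decomposition. With \(p_v=\Pr(v\in I)\) and \(h\) the binary entropy,
\(z=H(I)\le\sum_v h(p_v)\le n\,h(\mu/n)\le\mu\ln(n/\mu)+\mu\).
Second, if \(T\) is nonempty then \(Z(T)\ge2\) and only the empty set has size zero, so \(\mu\ge1-1/Z(T)\ge1/2\). Hence \(\ln(en/\mu)\le\ln(2en)\le\ln(54er)+3\ln(1+z)\), using \(\alpha+2\le3(1+z)\). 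The factor \(2\) in \(54\) is the bound \(1/\mu\le2\), not a stray factor from the entropy estimate.

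Your weaker inequality can also be rescued, but only with a case split you did not make. If \(Z(T)\le3\), then \(n\le2\) and \(\mu\ge1/2\) give \(en(1+\mu)/\mu^2\le12e<54(1+\ln2)^3\). If \(Z(T)\ge4\), then \(\mu\ge3/4\) gives \((1+\mu)/\mu^2\le28/9\). Also \(\alpha+2\le\log_2Z(T)+2<1.68(1+z)\), so \(en(1+\mu)/\mu^2<41r(1+z)^3\). In both cases \(e^2n(1+\mu)/\mu^2\le54er(1+z)^3\), which is what you need.
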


\begin{proof}
For the empty graph both sides are zero. Otherwise \(Z(T)\ge2\)
and \(\mu(T)\ge1/2\), because every independent set other than
the empty set has size at least one. Every subset of a maximum
independent set is independent, so \(\alpha(T)\le z/\ln2\).
The clique cap and~\eqref{eq:entropy-baseline} imply
\begin{equation}\label{eq:entropy-order}
 n:=|V(T)|\le r\theta(T)
 \le r(\alpha(T)+2)^3
 \le27r(1+z)^3.
\end{equation}

Write \(\mu=\mu(T)\). The entropy inequality in the proof of
\cite[Lemma~19]{DKPS2020}, specialised to fugacity one, states that
for every nonempty graph on \(n\) vertices,
\begin{equation}\label{eq:cited-entropy}
 \ln Z(T)\le\mu(T)\ln\bigl(en/\mu(T)\bigr).
\end{equation}
Here fugacity one means that all independent sets, including the
empty set, are equally likely. Using \(\mu\ge1/2\)
and~\eqref{eq:entropy-order},
\[
 \ln(en/\mu)\le\ln(54er)+3\ln(1+z),
\]
which proves the claimed bound. For monotonicity, put
\(a_r=\ln(54er)>3\). The derivative of \(f_r\) has the sign of
\[
 a_r+3\ln(1+z)-\frac{3z}{1+z}>a_r-3>0.
\]
\end{proof}

We also use the local occupancy theorem of Davies, Kang, Pirot and
Sereni~\cite[Theorem~10]{DKPS2020} at fugacity one. In the present
notation, it states the following. Let \(L\) be a nonempty graph
of maximum degree at most \(D\). If \(\beta,\gamma>0\) satisfy
\begin{equation}\label{eq:local-occupancy-hypothesis}
 \frac{\beta}{2Z(F)}+\gamma\mu(F)\ge1
\end{equation}
for every vertex \(v\) and every induced subgraph \(F\) of
\(L[N(v)]\), including the empty graph, then
\begin{equation}\label{eq:local-occupancy-conclusion}
 \alpha(L)\ge\mu(L)\ge\frac{|V(L)|}{\beta+\gamma D}.
\end{equation}
We verify its hypothesis using the preceding lemma.

\begin{theorem}\label{thm:occupancy}
Let \(L\) be a nonempty \(H_0\)-free graph with
\(\omega(L)\le r\), where \(r\ge1\). If \(D\ge e^e\)
is any real upper bound on its maximum degree, then
\[
 \alpha(L)\ge
 \frac{|V(L)|\ln D}
 {48D\bigl(\ln(r+1)+\ln\ln D\bigr)}.
\]
\end{theorem}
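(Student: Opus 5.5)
The plan is to apply the local occupancy theorem of Davies, Kang, Pirot and Sereni to \(L\), as stated in \eqref{eq:local-occupancy-hypothesis}--\eqref{eq:local-occupancy-conclusion}. The hypothesis will be verified using Lemma~\ref{lem:entropy} on every neighbourhood subgraph.

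First, every induced subgraph \(F\) of some \(L[N(v)]\) is itself \(H_0\)-free. It also has \(\omega(F)\le r\), since it is an induced subgraph of \(L\). Writing \(z=\ln Z(F)\ge0\), Lemma~\ref{lem:entropy} gives \(\mu(F)\ge f_r(z)\). This bound also holds for the empty \(F\), where \(z=0\). It therefore suffices to choose \(\beta,\gamma>0\) with
\[
 \frac{\beta}{2}e^{-z}+\gamma f_r(z)\ge1\qquad\text{for all }z\ge0.
\]

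I would split at a threshold \(z_0>0\). Set \(\beta=2e^{z_0}\), so the first term alone is at least \(1\) whenever \(z\le z_0\). Set \(\gamma=1/f_r(z_0)\). For \(z\ge z_0\), the monotonicity of \(f_r\) in Lemma~\ref{lem:entropy} then makes the second term at least \(1\). The natural choice is
\[
 z_0=\ln D-\ln\ln D,
\]
which is positive because \(D\ge e^e\). This gives \(\beta=2D/\ln D\) and
\[
 \gamma=\frac{\ln(54er)+3\ln(1+z_0)}{z_0}.
\]
Then \eqref{eq:local-occupancy-conclusion} yields \(\alpha(L)\ge|V(L)|/(\beta+\gamma D)\). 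It remains to show
\[
 \beta+\gamma D\le\frac{48D\bigl(\ln(r+1)+\ln\ln D\bigr)}{\ln D}.
\]

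The last step is routine but is where the constant \(48\) must be checked; it is the only real obstacle. Put \(u=\ln D\ge e\). The ratio \(\ln u/u\) is at most \(1/e\), so \(z_0\ge(1-1/e)u\). Since \(\ln\ln D\ge1\), we have \(\ln(54e)\le 5\ln\ln D\). Also \(3\ln(1+z_0)\le3\ln(1+u)\le3(\ln2+\ln u)\le 5\ln\ln D\). Hence
\[
 \gamma D\le\frac{D\bigl(\ln(r+1)+10\ln\ln D\bigr)}{(1-1/e)\ln D}\le\frac{16D\bigl(\ln(r+1)+\ln\ln D\bigr)}{\ln D}.
\]
Moreover \(\beta=2D/\ln D\le 2D\ln\ln D/\ln D\). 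Adding the two estimates gives a coefficient well under \(48\), which completes the argument.
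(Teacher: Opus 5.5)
Your proof is correct and is essentially the paper's argument. The paper also applies the DKPS local occupancy theorem, checking its hypothesis by splitting on \(Z(F)\) and invoking Lemma~\ref{lem:entropy} together with the monotonicity of \(f_r\); the only difference is that it splits at \(Z(F)=\sqrt D\) (taking \(\beta=2\sqrt D\), \(\gamma=1/f_r(\tfrac12\ln D)\)), whereas your threshold \(D/\ln D\) balances the two terms better and gives the constant \(18\) in place of \(48\). One minor slip: your step \(3(\ln2+\ln u)\le5\ln u\) fails for \(e\le u<2\sqrt2\), but \(3\ln(1+u)\le5\ln u\) holds directly for all \(u\ge e\), so your estimate stands.
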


\begin{proof}
Put
\[
 h=f_r\left(\tfrac12\ln D\right),\qquad
 \beta=2\sqrt D,\qquad \gamma=1/h.
\]
These numbers are positive. Every induced subgraph \(F\) of a
neighbourhood in \(L\) is \(H_0\)-free with clique number at
most \(r\). If \(Z(F)\le\sqrt D\), then
\(\beta/(2Z(F))\ge1\). Otherwise Lemma~\ref{lem:entropy}
and the monotonicity of \(f_r\) give \(\mu(F)\ge h\), so
\(\gamma\mu(F)\ge1\). Thus
\eqref{eq:local-occupancy-hypothesis} holds, also when \(F\) is empty.

The denominator in \(f_r\) exceeds one, and
\(\ln D/\sqrt D\le2/e\) for \(D\ge1\). Hence
\(h\le\tfrac12\ln D\le\sqrt D\), and
\((\beta+\gamma D)h=2\sqrt D\,h+D\le3D\).
By~\eqref{eq:local-occupancy-conclusion},
\[
 \frac{\alpha(L)}{|V(L)|}\ge\frac h{3D}
 =\frac{\ln D}
 {6D\bigl(\ln(54er)+3\ln(1+\tfrac12\ln D)\bigr)}.
\]
For \(D\ge e^e\), we have
\(1+\tfrac12\ln D\le\ln D\), \(\ln\ln D\ge1\)
and \(\ln(54e)<5\). Therefore
\[
 \ln(54er)+3\ln(1+\tfrac12\ln D)
 \le \ln r+8\ln\ln D
 \le 8\bigl(\ln(r+1)+\ln\ln D\bigr).
\]
Substitution gives the stated constant \(48\).
\end{proof}

\subsection{Bounding the order of a \texorpdfstring{\(K_r\)}{Kr}-free remainder}

The next lemma bounds the number of cross-edges in the complement
between two cells, without imposing a bound on either cell's order.

\begin{lemma}\label{lem:cross-edge-count}
Suppose \(J\in\Cclass\) is partitioned into cluster cells with
matching cross-nonedges between opposite components.
For two cells \(X,Y\) of orders \(x,y\), having respectively
\(p,q\) nonempty components, let \(L=\overline J\). Then
\[
 |E_L(X,Y)|\le x+y+4pq.
\]
The estimate holds for the restricted cells after arbitrary vertex
deletions.
\end{lemma}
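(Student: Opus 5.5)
The plan is a charging argument. We bound \(|E_L(X,Y)|\), the number of cross-nonedges of \(J\) between \(X\) and \(Y\), by splitting it into three groups. Two groups are charged to single vertices. The third is bounded by a constant for each pair of components, which is where \(P_2\cup P_3\)-freeness enters.

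Write the components as \(Q_1,\ldots,Q_p\) in \(X\) and \(R_1,\ldots,R_q\) in \(Y\). By hypothesis, the cross-nonedges between \(Q_i\) and \(R_j\) form a matching \(M_{ij}\), and \(E_L(X,Y)=\bigcup_{i,j}M_{ij}\). For a vertex \(v\), let \(d(v)\) be its number of nonneighbours on the opposite side. Since each \(M_{ij}\) is a matching, \(d(v)\) equals the number of opposite components in which \(v\) has a nonneighbour. Classify a nonedge \(uz\), with \(u\in X\) and \(z\in Y\), as follows.
\begin{itemize}
\item If \(d(u)=1\), charge \(uz\) to \(u\). This charges at most one nonedge to each vertex of \(X\), so at most \(x\) in total.
\item If \(d(u)\ge2\) but \(d(z)=1\), charge \(uz\) to \(z\). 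This gives at most \(y\) in total.
\item The remaining nonedges are \emph{doubly branched}: \(d(u)\ge2\) and \(d(z)\ge2\).
\end{itemize}
It then suffices to show that each matching \(M_{ij}\) contains at most four doubly branched nonedges. Summing over the \(pq\) pairs gives \(x+y+4pq\).

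For the key claim, fix \(Q=Q_i\) and \(R=R_j\), and suppose \(M_{ij}\) contains five doubly branched nonedges \(u_sz_s\), \(s=1,\ldots,5\). For each \(s\), choose \(z'_s\) in a component \(R'_s\ne R\) with \(u_sz'_s\) a nonedge, and \(u'_s\) in a component \(Q'_s\ne Q\) with \(z_su'_s\) a nonedge. Because \(M_{ij}\) is a matching, \(u_s\) is adjacent to \(z_t\) for \(t\ne s\). Also the \(u_s\) are pairwise adjacent, as are the \(z_s\).

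I would look for an induced \(P_3\) of the form \(u'_s-w-u\), with its ends in two distinct components of one side and centre on the other side, anticomplete to an edge formed by two of the remaining chosen vertices. This is modelled on Lemma~\ref{lem:overlap-obstruction}. The matching property says each chosen vertex can spoil at most one index through each relevant component. Among five indices, a counting argument like the one in that lemma should leave one index \(t\) for which all required adjacencies hold. For example, the path \(u_t-z_s-u'_t\)-type configurations, or \(u_s-z_t-?\), should be anticomplete to an edge such as \(z'_su'_t\)-type pairs. Which triple and edge work depends on whether the auxiliary components \(R'_s\) and \(Q'_s\) coincide for different \(s\). So I expect a short case split: all \(R'_s\) equal versus at least two distinct, and likewise for the \(Q'_s\).

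This case analysis is the main obstacle. It is where the constant \(4\) has to come out. If the cleanest argument only yields a larger constant \(c\), the same charging gives \(x+y+c\,pq\). I would then revisit the choice of which vertex to charge, for instance by charging nonedges with \(d(u)=2\) more cleverly, to recover \(4\).

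Finally, the bound is stable under deletion. After deleting vertices, the restricted cells are still cluster graphs, their cross-nonedges still form matchings, and the graph is still \(P_2\cup P_3\)-free, since all of these properties are hereditary. Every quantity in the argument refers to the current graph, with \(x\), \(y\), \(p\) and \(q\) taken after deletion. So the same proof applies verbatim to the restricted cells.
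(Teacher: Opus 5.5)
There is a genuine gap. Your key claim, that each matching \(M_{ij}\) contains at most four doubly branched nonedges, is false, and no constant \(c\) can replace \(4\). Consequently the case analysis you flag as the main obstacle cannot be completed, and your fallback to a larger constant also fails.

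\textbf{Counterexample.} Take cliques \(U=\{u_s\}\), \(U'=\{u'_s\}\), \(Z=\{z_s\}\) and \(Z'=\{z'_s\}\) with \(s\in[n]\). Put \(X=U\cup U'\) and \(Y=Z\cup Z'\). Make \(U\) anticomplete to \(U'\) and \(Z\) anticomplete to \(Z'\). Make every \(X\)--\(Y\) pair adjacent except the four index-wise matchings \(u_sz_s\), \(u_sz'_s\), \(u'_sz_s\), \(u'_sz'_s\).

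The complement \(L\) is then the complete bipartite graphs between \(U,U'\) and between \(Z,Z'\), together with a \(K_4\) on each index class.

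\emph{This graph lies in \(\Cclass\).} An induced \(P_2\cup P_3\) in \(J\) would give two \(L\)-nonadjacent vertices having three common \(L\)-neighbours, two of which are \(L\)-adjacent. Two \(L\)-nonadjacent vertices have distinct indices. Their common \(L\)-neighbourhood is one of two kinds:
\begin{itemize}
\item an \(L\)-independent class, for example \(U'\) for the pair \(u_s,u_t\);
\item a set of two vertices, for example \(\{u'_t,z'_s\}\) for the pair \(u_s,z_t\).
\end{itemize}
All other pairs are symmetric to these, so \(J\in\Cclass\).

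\emph{Why the charging fails.} Every vertex has \(d=2\), so all \(n\) nonedges of the \(U\)--\(Z\) matching are doubly branched. The lemma itself survives here, since \(4n\le x+y+16\). But these \(4n=x+y\) edges must be paid for by the linear term. Your charging sends only nonedges with an endpoint of \(d=1\) to that term.

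\textbf{Missing idea (the paper's route).}
\begin{itemize}
\item Interchange the cells so that \(y\le x\). If every \(Y\)-vertex has at most two \(L\)-neighbours in \(X\), then \(|E_L(X,Y)|\le2y\le x+y\). This case covers configurations like the one above.
\item Otherwise fix \(v\in Y\) with \(L\)-neighbours \(u_1,u_2,u_3\) in distinct \(X\)-components, and put \(U=N_L(v)\cap X\).
\item In any other \(Y\)-component \(D\), all but at most three vertices \(z\) are \(J\)-adjacent to \(u_1,u_2,u_3\). Such a \(z\) has at most one \(L\)-neighbour in \(X\setminus U\). Otherwise two such neighbours \(a,b\) give the induced path \(a-v-b\), and it is anticomplete to the edge \(u_iz\) for any \(u_i\) lying outside the components of \(a\) and \(b\).
\item Count the edges. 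Each such \(D\) contributes at most \(|D|+3p+p\): \(|D|\) from non-exceptional vertices into \(X\setminus U\), \(3p\) from exceptional vertices, and \(p\) into \(U\). The edges into \(v\)'s own component number at most \(x\). The total is \(x+y+4p(q-1)\).
\end{itemize}

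The essential point is that the \(P_3\) is centred at a single vertex of \(L\)-degree at least three. A per-pair analysis of \(Q_i,R_j\) never has such a vertex available.

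Your closing remark about stability under vertex deletion is correct.
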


\begin{proof}
Interchange the cells if necessary so that \(y\le x\).
If every vertex of \(Y\) has at most two neighbours in \(X\)
in \(L\), there are at most \(2y\le x+y\) cross-edges.
Otherwise choose \(v\in Y\) with three \(L\)-neighbours
\(u_1,u_2,u_3\) in distinct \(X\)-components. Put
\[
 U=N_L(v)\cap X,
\]
so \(|U|\le p\), and let \(D_0\) be the \(Y\)-component
containing \(v\).

In another \(Y\)-component \(D\), at most three vertices
are nonadjacent in \(J\) to one of \(u_1,u_2,u_3\).
Call these vertices exceptional. Every other \(z\in D\)
has at most one \(L\)-neighbour in \(X\setminus U\).
Indeed, two such neighbours \(a,b\) would lie in distinct
\(X\)-components. Choose \(u_i\) in neither component.
In \(J\), the path \(a-v-b\) is induced, and the edge
\(u_i z\) is anticomplete to it, a contradiction.

The nonexceptional vertices of \(D\) send at most \(|D|\)
edges into \(X\setminus U\), and the exceptional vertices
send at most \(3p\). All vertices of \(D\) together send
at most \(|U|\le p\) edges into \(U\), by the matching
condition. There are at most \(x\) edges from \(X\) into
\(D_0\). Summing gives at most \(x+y+4p(q-1)\) edges.
The case of an empty cell is immediate. All hypotheses survive
induced vertex deletion.
\end{proof}

Let \(J\in\Cclass\) have at most \(K\) cluster cells,
matching cross-nonedges, and \(\omega(J)\le K\). Suppose
each current cell has order at most \(MK\), where \(M\ge1\).
If \(L'\) is a \(K_r\)-free induced subgraph of \(\overline J\),
then each restricted cell has at most \(r-1\) nonempty parts:
one vertex from each of \(r\) original components would form
a clique in \(L'\).
Write \(N=|V(L')|\). Internal cell edges contribute at most
\(MKN/2\), while Lemma~\ref{lem:cross-edge-count} bounds all
cross-edges by \(KN+2K^2r^2\). Thus
\begin{equation}\label{eq:average-degree}
 \overline d(L')\le(M+2)K+\frac{4K^2r^2}{N}
 \qquad(N>0).
\end{equation}

\begin{lemma}\label{lem:remainder-order}
Under the preceding hypotheses, for all sufficiently large \(K\),
uniformly in \(M\ge1\) and integers \(r\ge4\),
\[
 N\le768(M+2)K^2\xi_r+
 40rK^{3/2}\sqrt{\xi_r},\qquad
 \xi_r=\frac{\ln(r+1)+\ln\ln K}{\ln K}.
\]
\end{lemma}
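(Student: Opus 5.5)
Apply Theorem~\ref{thm:occupancy} to \(L'\) itself, using \(\alpha(L')\le\omega(J)\le K\), which holds because an independent set of \(L'\) is a clique of \(J\). Call \(N\) \emph{large} if \(4K^2r^2/N\le K\), that is, \(N\ge4Kr^2\). For large \(N\), \eqref{eq:average-degree} gives \(\overline d(L')\le(M+3)K\).

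Theorem~\ref{thm:occupancy} needs a maximum-degree bound, not an average one. So first pass to an induced subgraph of small maximum degree. By Markov's inequality, at least half the vertices of \(L'\) have degree at most \(2\overline d(L')\). Delete the others to get an induced subgraph \(L''\) with \(|V(L'')|\ge N/2\) and maximum degree at most \(D:=2(M+3)K\). The subgraph \(L''\) is still \(H_0\)-free (because \(J\) is \((P_2\cup P_3)\)-free) and \(K_r\)-free, so its clique number is at most \(r-1\le r\). For \(K\) large we have \(D\ge e^e\). Theorem~\ref{thm:occupancy} then gives
\[
 K\ge\alpha(L'')\ge\frac{N}{2}\cdot\frac{\ln D}{48D(\ln(r+1)+\ln\ln D)}.
\]
Since \(D\ge K\), we have \(\ln D\ge\ln K\). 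We also need \(\ln\ln D\) controlled by \(\ln\ln K\) plus a term in \(\ln M\). This is the delicate point for uniformity in \(M\).

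Write \(D=2(M+3)K\). The map \(D\mapsto\ln D/(\ln(r+1)+\ln\ln D)\) is increasing in \(D\), and \(D\ge K\). So we can lower bound this ratio by its value at \(D=K\) (with the \(e^e\) condition), giving \(\ln K/(\ln(r+1)+\ln\ln K)=1/\xi_r\). This yields
\[
 N\le 96DK\xi_r=192(M+3)K^2\xi_r\le 768(M+2)K^2\xi_r .
\]
There is slack for cruder constants. The case analysis may also let us replace the factor \(2\) from Markov by a factor tuned to the constant \(768\).

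If \(N<4Kr^2\), the first term does not cover it and the second term must. Here I would not use the average-degree simplification. Instead, set \(D=2\overline d\) with \(\overline d\le(M+2)K+4K^2r^2/N\) and balance the two contributions. When \(4K^2r^2/N\) dominates, \(D\approx8K^2r^2/N\), and occupancy gives roughly
\[
 K\gtrsim N\cdot\frac{N}{K^2r^2\xi_r},
\]
hence \(N\lesssim rK^{3/2}\sqrt{\xi_r}\). This is exactly the shape of the second term, with constant \(\sqrt{8\cdot96}<40\) after tracking. For this step we again need \(\ln D\ge\ln K\) so that the \(\xi_r\) comparison holds. If instead \(D<K\), the trivial bound \(\alpha\ge N/(D+1)\) gives \(N\le K(D+1)\lesssim K^2\), which the first term absorbs once \(\xi_r\ge\) something like \(1/\ln K\). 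That is true since \(\ln\ln K\ge1\).

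The main obstacle is the bookkeeping needed to keep the estimate uniform in \(M\) and \(r\). Monotonicity in \(D\) handles the \(M\)-dependence inside the logarithm. The condition \(r\ge4\) ensures \(\ln(r+1)\) is bounded below, which absorbs constant losses. Finally, choosing \(K\) large once (independent of \(M,r\)) guarantees \(D\ge e^e\).
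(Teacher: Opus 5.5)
Your route is essentially the paper's. You discard the vertices of degree above \(2\overline d(L')\), apply Theorem~\ref{thm:occupancy} to the surviving half using \(\alpha\le K\), and use the monotonicity of \(x\mapsto(\ln(r+1)+\ln\ln x)/\ln x\) to replace the degree parameter by a fixed scale. You then substitute \eqref{eq:average-degree} and solve \(N^2\le TN+U\). Your large-\(N\) case is correct. So is the part of the small-\(N\) case with \(2\overline d\ge K\), which even gives the constant \(\sqrt{768}<40\).

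\textbf{The gap is the subcase \(2\overline d<K\).} There you use \(\alpha(L'')\ge|V(L'')|/(D+1)\) to get \(N\le 2K(D+1)=O(K^2)\). You then claim the first term absorbs this because \(\xi_r\gtrsim1/\ln K\). It does not. For bounded \(M\) and \(r\), the first term is \(768(M+2)K^2\xi_r=\Theta(K^2\ln\ln K/\ln K)=o(K^2)\), so a bound of order \(K^2\) is too weak. Even \(\xi_r\ge1/\ln K\) would only give a term of order \(K^2/\ln K\).

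\textbf{The fix is the device the paper uses.} Theorem~\ref{thm:occupancy} accepts any real upper bound on the maximum degree, so take \(D=\max\{2\overline d,K\}\); the paper takes \(\lceil\max\{2\overline d,\sqrt K\}\rceil\). Then \(D\ge e^e\) and \(\ln D\ge\ln K\) hold in every case. Monotonicity gives \(N\le96K\xi_rD\le96K\xi_r(2\overline d+K)\). Substituting \eqref{eq:average-degree} yields \(N^2\le768(M+2)K^2\xi_rN+768K^3r^2\xi_r\) in one step, with no case split on \(N\) and no trivial-bound subcase.

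(Your subcase can also be rescued by combining \(N<2K(K+1)\) with \(N<4Kr^2\). If \(r\le10\sqrt{K\xi_r}\), the second term dominates \(4Kr^2\); otherwise \(\xi_r\ge1/2\) and the first term dominates \(2K(K+1)\). That is not the argument you gave, and the \(\max\) trick is simpler.)
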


\begin{proof}
The empty case is immediate. At least \(N/2\) vertices of \(L'\)
have degree at most twice its average degree \(\overline d\).
Let \(Q\) be their induced subgraph and put
\[
 D_*=\left\lceil\max\{2\overline d,\sqrt K\}\right\rceil.
\]
For sufficiently large \(K\), \(D_*\ge e^e\).
Apply Theorem~\ref{thm:occupancy} to \(Q\), using
\(\omega(Q)\le r\), \(\alpha(Q)\le K\), and maximum
degree at most \(D_*\).
The function
\[
 x\longmapsto\frac{\ln(r+1)+\ln\ln x}{\ln x}
\]
is decreasing for \(x\ge\sqrt K\), because its derivative has
the sign of \(1-\ln(r+1)-\ln\ln x\).
Its value at \(D_*\) is consequently at most \(2\xi_r\).
We obtain
\[
 N\le192KD_*\xi_r
 \le192K(2\overline d+\sqrt K+1)\xi_r.
\]
Substitute~\eqref{eq:average-degree} and use
\(\sqrt K+1\le2K\). After increasing the linear coefficient,
\[
 N^2\le768(M+2)K^2\xi_r N+1536K^3r^2\xi_r.
\]
For \(T,U\ge0\), the inequality \(N^2\le TN+U\) implies
\(N\le T+\sqrt U\). Since \(\sqrt{1536}<40\), the result follows.
\end{proof}

\section{Multiscale colouring of a column}
\label{sec:multiscale}

Let \(J\in\Cclass\) be partitioned into \(n\ge1\) cluster
cells, each with component orders at most \(w\ge1\).
Assume that the cross-nonedges between any two components in distinct
cells form a matching. Put \(K=n+w\), and assume that
\(\omega(J)\le K\).
Here the \emph{width} \(w\) is again a common upper bound on
component order, not necessarily attained. List the components of a cell \(X\) in
nonincreasing order of size, breaking ties arbitrarily, and write
their orders as
\[
 s_1(X)\ge s_2(X)\ge\cdots,
\]
padding with zeros. The index \(b\) is the \emph{rank} in this
ordering; the tail after rank \(b\) is the union of components of
larger rank. We first control several ranks at once.

\begin{lemma}\label{lem:rank-profile}
If \(b\ge3\) is an integer with \(b^2\le K\), then fewer than
\(b\) cells satisfy \(s_b(X)>5K/b\).
\end{lemma}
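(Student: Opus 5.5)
The plan is to argue by contradiction. We show that two cells, each with at least \(b\) large components, already force a clique of order greater than \(K\), so that at most one such cell exists. Since \(b\ge3\), this gives fewer than \(b\).

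Suppose \(X,Y\) are two distinct cells with \(s_b(X)>5K/b\) and \(s_b(Y)>5K/b\). Let \(\mathcal Q\) be the \(b\) largest components of \(X\) and \(\mathcal R\) the \(b\) largest components of \(Y\). Put \(x=\sum_{Q\in\mathcal Q}|Q|\) and \(y=\sum_{R\in\mathcal R}|R|\), so that \(x,y>b\cdot 5K/b=5K\).

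For one component \(Q\in\mathcal Q\) and one \(R\in\mathcal R\), the cross-nonedges form a matching, as in the proof of Lemma~\ref{lem:column-interface}. Hence
\[
 \omega(Q\cup R)=|Q|+|R|-|E_L(Q,R)|,\qquad L=\overline J .
\]
We choose \(Q\) and \(R\) independently and uniformly at random from \(\mathcal Q\) and \(\mathcal R\). Then
\[
 \mathbb E\bigl[|Q|+|R|\bigr]=\frac{x+y}{b}.
\]
The expected number of nonedges is \(|E_L(\bigcup\mathcal Q,\bigcup\mathcal R)|/b^2\). Restricting \(J\) to \(\bigcup\mathcal Q\cup\bigcup\mathcal R\) leaves two cells with exactly \(p=q=b\) nonempty components. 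Lemma~\ref{lem:cross-edge-count} applies after vertex deletion and bounds this count by \(x+y+4b^2\).

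Some choice of \(Q,R\) therefore satisfies
\[
 \omega(J)\ge\omega(Q\cup R)\ge\frac{x+y}{b}-\frac{x+y+4b^2}{b^2}
 =(x+y)\frac{b-1}{b^2}-4
 >10K\cdot\frac{b-1}{b}\cdot\frac1b\cdot b-4 .
\]
More carefully, \((x+y)(b-1)/b^2>10K(b-1)/b^2\). This is the one step needing care, since the naive average is weaker by a factor of \(b\). So we should instead use the better lower bound on \(x+y\).

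Each of the \(b\) listed components has order greater than \(5K/b\). Hence \(x>5K\) and \(y>5K\), and the gain term is \((x+y)/b>10K/b\). The loss term is \((x+y)/b^2+4\). Both terms scale with \(x+y\), and their ratio is \(1/b\le1/3\). This yields
\[
 \omega(Q\cup R)>\Bigl(1-\tfrac1b\Bigr)\frac{x+y}{b}-4\ge\tfrac23\cdot\frac{10K}{b}-4 .
\]
This is only of order \(K/b\), so two cells do not suffice in general. The main obstacle is exactly here, and the fix is to use more cells.

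We therefore assume that \(b\) cells \(X_1,\dots,X_b\) satisfy \(s_b(X_i)>5K/b\), aiming for a contradiction. From each of \(t\) chosen cells, with \(t\le b\) to be fixed, we pick one of its \(b\) largest components uniformly at random. The picked components lie in distinct cells, and a vertex has at most one nonneighbour in each other picked component. Deleting one endpoint of every cross-nonedge leaves a clique, of order at least the total size minus the number of cross-nonedges.

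Let \(x_i\) denote the total order of the \(b\) largest components of \(X_i\). The expected total size is \(\sum_i x_i/b\). By Lemma~\ref{lem:cross-edge-count} applied to each pair of cells, the expected number of nonedges is at most
\[
 \sum_{i<j}\frac{x_i+x_j+4b^2}{b^2}=\frac{(t-1)\sum_i x_i}{b^2}+2t(t-1).
\]
Hence some choice gives a clique of order at least
\[
 \sum_{i=1}^t x_i\frac{b-t+1}{b^2}-2t(t-1)
 >5K\,\frac{t(b-t+1)}{b}-2t(t-1).
\]
Taking \(t=2\le b\) gives more than
\[
 10K\cdot\frac{b-1}{b}-4\ge\frac{20K}{3}-4>K
\]
for \(K\ge1\). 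This contradicts \(\omega(J)\le K\). Taking \(t\approx b/2\) would also work, using \(2t^2\le b^2/2\le K/2\), which is where the hypothesis \(b^2\le K\) would enter.

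I expect the only delicate point to be justifying the use of Lemma~\ref{lem:cross-edge-count} for the restricted pair, with \(p=q=b\) and the orders \(x_i,x_j\) of the selected components, and checking that the clique-from-matchings deletion argument is valid across several cells simultaneously.
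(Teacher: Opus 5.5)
Your argument breaks at the last inequality. From $x_i>5K$ you get $\sum_{i\le t}x_i\,(b-t+1)/b^2>5Kt(b-t+1)/b^2$, not $5Kt(b-t+1)/b$, so a spurious factor of $b$ has appeared. With the correct factor, $t=2$ gives only $10K(b-1)/b^2-4$. That is of order $K/b$, which is exactly the obstruction you found in your two-cell attempt.

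The claim that at most one cell has $b$ large components is in fact false. Take two cells, each a disjoint union of $b\ge 11$ cliques of order $\lfloor 5K/b\rfloor+1$, complete to each other. This graph is $(P_2\cup P_3)$-free, satisfies the column hypotheses, and has clique number at most $K$.

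A larger $t$ does not repair the argument on the whole range $b^2\le K$. Write $t=\lambda b$ and $\beta=b^2/K\le 1$. Up to terms of relative order $1/b$, the corrected bound is $K\bigl(5\lambda-(5+2\beta)\lambda^2\bigr)\le 25K/(4(5+2\beta))$. At $\beta=1$ this equals $25K/28<K$. Your suggested $t\approx b/2$ gives about $5K/4-b^2/2$, which exceeds $K$ only when $b^2$ is below roughly $K/2$.

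The loss comes from the estimate ``clique order $\ge$ vertices minus nonedges'', combined with sampling a subset of cells. This is the deletion form of Tur\'an's bound, and it is a constant factor weaker than the Caro--Wei bound, which is what the constant $5$ is calibrated for.

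The paper's proof does the following:
\begin{itemize}
\item It keeps all $r\ge b$ offending cells.
\item It truncates the $b$ retained components of each cell to the common order $T=\lfloor 5K/b\rfloor+1$. The random complement graph $\Gamma$ then has exactly $rT$ vertices, so it suffices to pick an outcome with $e(\Gamma)$ at most its mean.
\item It uses $b^2\le K$ to get $T\ge 2b$. Then the per-pair bound $2bT+4b^2$ from Lemma~\ref{lem:cross-edge-count} is at most $4bT$, giving $2e(\Gamma)\le 4r(r-1)T/b$.
\item Caro--Wei, $\alpha(\Gamma)\ge |V(\Gamma)|^2/(|V(\Gamma)|+2e(\Gamma))$, then gives $\alpha(\Gamma)\ge brT/(b+4(r-1))\ge bT/5>K$.
\end{itemize}

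The two points you flagged as delicate are fine: Lemma~\ref{lem:cross-edge-count} with $p=q=b$ does survive the restriction, and the multi-cell deletion-to-a-clique step is valid. With the factor corrected and $t\approx b/2$, your approach proves the lemma only under a stronger hypothesis such as $2b^2\le K$. That would still suffice for the later application, where $B\le \ln K/\ln\ln K$, but it does not prove the statement as posed.
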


\begin{proof}
Put \(T=\lfloor5K/b\rfloor+1\). Suppose \(r\ge b\) cells
each have \(b\) components of order at least \(T\).
Retain \(b\) such components in each cell, truncated to order
\(T\), and choose one retained component uniformly and
independently in every cell. By Lemma~\ref{lem:cross-edge-count},
two restricted cells of order \(bT\) have at most
\(2bT+4b^2\) cross-nonedges.
Some outcome therefore has a complement graph \(\Gamma\)
on \(rT\) vertices satisfying
\[
 2e(\Gamma)\le r(r-1)\left(\frac{2T}{b}+4\right)
 \le\frac{4r(r-1)T}{b}.
\]
The last inequality uses \(T\ge2b\), which follows from \(K\ge b^2\).

We use the Caro--Wei bound~\cite{Caro1979,Wei1981}, whose short
probabilistic proof we recall. For any graph, take a uniformly random ordering
of its vertices and select
each vertex preceding all its neighbours. The selected set is
independent, and its expected size is \(\sum_v1/(\deg(v)+1)\).
Cauchy--Schwarz consequently gives
\(\alpha\ge |V|^2/(|V|+2e)\).
Applied to \(\Gamma\), this yields
\[
 \alpha(\Gamma)\ge\frac{brT}{b+4(r-1)}
 \ge\frac{bT}{5}>K.
\]
An independent set of this size in \(\Gamma\) is a clique in \(J\),
contradicting \(\omega(J)\le K\).
\end{proof}

In the following construction, the \emph{core} is the induced subgraph
retained after deleting the specified whole cells and component tails.

\begin{proposition}\label{prop:dyadic-cleanup}
Let \(B\ge4\) be a power of two with \(B^2\le K\), and set
\[
 M_B=3+5\log_2 B.
\]
There is an induced core \(J_*\) such that
\[
 \chi(J)\le\chi(J_*)+\frac{5K^2}{B}+2BK.
\]
Each retained cell has fewer than \(B\) components and total
order at most \(M_BK\). In particular,
\(|V(J_*)|\le M_BK^2\).
\end{proposition}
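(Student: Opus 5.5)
The plan is to apply Lemma~\ref{lem:rank-profile} at every dyadic rank \(b=4,8,16,\ldots,B\), delete the few cells that violate the rank bound at some such \(b\), and then cut the tail after rank \(B-1\) from each surviving cell. Every dyadic rank \(b\le B\) satisfies \(b\ge3\) and \(b^2\le B^2\le K\), so the lemma applies at each one.

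\emph{Step 1 (bad cells).} For each dyadic \(b\) with \(4\le b\le B\), call a cell \(b\)-bad if \(s_b(X)>5K/b\). Lemma~\ref{lem:rank-profile} gives fewer than \(b\) such cells. The set of cells that are bad for some dyadic \(b\) therefore has fewer than
\[
 \sum_{i=2}^{\log_2B}2^i<2B
\]
members. Give each deleted cell its own palette of \(w\le K\) colours; this is enough because a cell is a cluster graph with components of order at most \(w\). The deleted cells then cost at most \(2BK\) colours in total.

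\emph{Step 2 (tails).} In every remaining cell \(X\), the components of rank at least \(B\) have order at most \(s_B(X)\le5K/B\). Their union is a cluster graph, since distinct components of one cell are anticomplete, so it can be coloured with \(\lfloor5K/B\rfloor\) colours. Use a fresh palette for each of the at most \(n\le K\) cells. The tails then cost at most \(5K^2/B\) colours. Let \(J_*\) be what remains, namely the first \(B-1\) components of each good cell. Colouring \(J_*\) with yet another disjoint palette and combining gives \(\chi(J)\le\chi(J_*)+5K^2/B+2BK\). This is valid because the palettes are disjoint, so no edge, in particular no cross-edge, is monochromatic. By construction each retained cell has fewer than \(B\) components.

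\emph{Step 3 (order of a retained cell).} Split the ranks \(1,\ldots,B-1\) into \(\{1,2,3\}\) and the dyadic blocks \([b,2b)\) for \(b=4,\ldots,B/2\).
\begin{itemize}
 \item The first three components contribute at most \(3w\le3K\).
 \item In a block \([b,2b)\) there are \(b\) ranks. Each has order at most \(s_b\le5K/b\), because the cell is not \(b\)-bad and the orders are nonincreasing. So each block contributes at most \(5K\).
\end{itemize}
There are \(\log_2B-2\) blocks, so a retained cell has order at most \(3K+5K(\log_2B-2)\le M_BK\). Summing over at most \(n\le K\) cells gives \(|V(J_*)|\le M_BK^2\).

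The only step needing care is the bookkeeping. The bad-cell counts must be summed over the dyadic scales, which is what gives the geometric bound \(2B\). The tail must be cut exactly at rank \(B\), so that the bound \(s_B\le5K/B\) from the last scale controls every deleted component. The remaining steps are routine once the rank-profile lemma is available.
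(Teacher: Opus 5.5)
Your proposal is correct and follows the paper's proof step for step. You remove the fewer than $2B$ cells that are exceptional at the dyadic ranks $4,8,\ldots,B$, at $w\le K$ colours each, and give each surviving cell's tail beyond rank $B-1$ its own palette of at most $5K/B$ colours. You then bound a retained cell's order by $3K$ plus $5K$ per dyadic band; your count $(5\log_2 B-7)K$ is slightly sharper than, and implies, the stated $M_BK$.
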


\begin{proof}
Apply Lemma~\ref{lem:rank-profile} at ranks \(4,8,\ldots,B\).
Remove the union of all exceptional cells. There are fewer than
\[
 4+8+\cdots+B=2B-4<2B
\]
of them, and a separate palette of at most \(w\le K\) colours
for each costs at most \(2BK\).
In every remaining cell retain only the first \(B-1\) components.
Its omitted tail has largest component order at most \(5K/B\).
Separate palettes for these tails cost at most \(5nK/B\le5K^2/B\).

The first three retained components in a cell have total order
at most \(3K\). Each dyadic band
\[
 q,q+1,\ldots,2q-1,\qquad q=4,8,\ldots,B/2,
\]
contains at most \(q\) components of order at most \(5K/q\),
and hence has total order at most \(5K\).
Summing proves the cell-order bound. There are at most \(n\le K\)
cells. Each removed cell or tail has been coloured with its own
palette.
\end{proof}

The order estimate can be sharpened at each stage of the covering
argument.

\begin{corollary}\label{cor:current-cell-order}
Let \(r\ge4\) be a power of two. In every \(K_r\)-free induced
subgraph of \(\overline{J_*}\), each restricted cell has total
order at most
\[
 M_rK,\qquad M_r=3+5\log_2 r.
\]
\end{corollary}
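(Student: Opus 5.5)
The key observation is the one already made before Lemma~\ref{lem:remainder-order}: if \(L'\) is a \(K_r\)-free induced subgraph of \(\overline{J_*}\), then each restricted cell meets at most \(r-1\) of the original clique components of that cell. Indeed, the original components of a cell are anticomplete in \(J\). So one vertex from each of \(r\) distinct components would give a clique of order \(r\) in \(L'\).

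A restricted cell is therefore contained in the union of at most \(r-1\) original components of the corresponding cell of \(J_*\). It suffices to bound the total order of any \(r-1\) components of a retained cell. That total is largest for the \(r-1\) largest components, of orders \(s_1\ge\cdots\ge s_{r-1}\), padding with zeros. The task is thus to show
\[
 \sum_{b=1}^{r-1}s_b(X)\le M_rK
\]
for every retained cell \(X\) of \(J_*\).

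For this I reuse the dyadic banding from the proof of Proposition~\ref{prop:dyadic-cleanup}. First, each component has order at most \(w\le K\), so the three largest contribute at most \(3K\). Next, consider the bands \(q,\ldots,2q-1\) for \(q=4,8,\ldots,r/2\). Each band has at most \(q\) components, and I want each of them to have order at most \(5K/q\), so that the band contributes at most \(5K\).

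This needs \(s_q(X)\le5K/q\) for those ranks. If \(r\le B\), this is guaranteed for every retained cell: the cleanup removed all exceptional cells at ranks \(4,\ldots,B\), and Lemma~\ref{lem:rank-profile} applied there. If \(r>B\), I instead use that a retained cell has fewer than \(B\) components. Then the sum over all of them is at most \(M_BK\le M_rK\), by the cell-order bound of Proposition~\ref{prop:dyadic-cleanup} and monotonicity of \(M\) in its index.

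Counting the bands \(q=4,\ldots,r/2\) gives \(\log_2 r-2\) of them. The total is therefore at most \(3K+5K(\log_2r-2)\le M_rK\). The slack here is harmless.

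The main point needing care is the case analysis \(r\le B\) versus \(r>B\), together with confirming that the rank hypothesis \(b^2\le K\) was met for every rank actually used. For \(r\le B\) the ranks involved are at most \(r/2<B\), and these are exactly ones handled during the cleanup, so no new application of Lemma~\ref{lem:rank-profile} is required. The statement about restricted cells follows at once, since restriction only deletes vertices.
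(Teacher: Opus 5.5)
Your proposal is correct and follows essentially the same route as the paper's proof. Both arguments use the same three steps: a \(K_r\)-free complement subgraph meets at most \(r-1\) original components per cell, so the restricted cell is bounded by the \(r-1\) largest components; for \(r\le B\) the dyadic banding of Proposition~\ref{prop:dyadic-cleanup} is stopped at rank \(r-1\); and for \(r>B\) you fall back on \(M_BK\le M_rK\).
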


\begin{proof}
A restricted cell has vertices in at most \(r-1\) original
components, since representatives of \(r\) distinct components
form a complement clique.
If \(r\le B\), the sum of the orders of any \(r-1\) original
components is at most the sum of the largest \(r-1\) component
orders. The dyadic calculation in Proposition~\ref{prop:dyadic-cleanup},
stopped at rank \(r-1\), bounds this by \(M_rK\).
The surviving components need not form a prefix of the original
ordering. If \(r>B\), use \(M_BK\le M_rK\).
\end{proof}

\begin{proposition}\label{prop:multiscale-cover}
For the core in Proposition~\ref{prop:dyadic-cleanup}, put
\(\eta=\ln\ln K/\ln K\) and \(L=\overline{J_*}\). Then, for
sufficiently large \(K\),
\[
 \theta(L)=O\!\left(
 K^2\eta+K^{3/2}\ln K+M_BK^{7/4}
 \right),
\]
with an absolute implicit constant.
\end{proposition}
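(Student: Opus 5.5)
We will cover \(L=\overline{J_*}\) by cliques greedily at successively smaller clique orders, in dyadic stages. Put \(r_0=2^{\lceil\log_2(K+1)\rceil}\), so \(L\) is \(K_{r_0+1}\)-free since \(\alpha(J_*)\le\omega\)-type bounds give \(\omega(L)=\alpha(J_*)\le K\); actually cliques of \(L\) are independent sets of \(J_*\), which pick at most one vertex per component and hence have order at most the total number of components, fewer than \(Bn\). We work with the clique number of \(L\) instead: at stage \(r\) (a power of two, decreasing from the top value down to a threshold \(r_1\)), as long as the current remainder \(L'\) contains a clique of order at least \(r/2\), remove such a clique; when none remains, \(L'\) is \(K_{r/2}\)-free and we pass to the next stage. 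Cliques removed at stage \(r\) have order \(\ge r/2\), so their number is at most \(2|V(J_*)|/r\) if counted crudely, but we will instead bound it by \(2N_r/r\), where \(N_r\) is the order of the remainder when stage \(r\) begins, i.e. of a \(K_r\)-free induced subgraph.

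The key estimate is Lemma~\ref{lem:remainder-order} applied to each \(K_r\)-free remainder, with \(M=M_r=3+5\log_2 r\) supplied by Corollary~\ref{cor:current-cell-order} (for \(r\ge4\); the hypotheses on cells, matchings and \(\omega(J_*)\le K\) are inherited from \(J\), and the cell count is \(n\le K\)). This gives
\[
 N_r\le768(M_r+2)K^2\xi_r+40rK^{3/2}\sqrt{\xi_r}.
\]
At the top stages we use instead \(N\le|V(J_*)|\le M_BK^2\). Summing \(2N_r/r\) over dyadic \(r\): the first term contributes \(\sum_r K^2(\log r)(\ln(r+1)+\ln\ln K)/(r\ln K)\), which converges geometrically in \(r\) and is dominated by small \(r\), giving \(O(K^2\ln\ln K/\ln K)=O(K^2\eta)\) since \(\log r\) terms are absorbed by \(1/r\); the second term contributes \(O(K^{3/2}\sqrt{\xi_r})\) per stage over \(O(\ln K)\) stages, hence \(O(K^{3/2}\ln K)\). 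The smallest stage \(r_1\) (say \(r_1=4\)) leaves a \(K_4\)-free remainder whose vertices we cover by singletons, costing \(N_4=O(K^2\eta)+O(K^{3/2})\), which fits.

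The main obstacle is the large-\(r\) regime, where Lemma~\ref{lem:remainder-order} is weak or where the clique number of \(L\) is huge (up to about \(BK\)), so the number of stages and the \(40rK^{3/2}\) term become large. I would first handle this by starting the greedy only at \(r\le K^{1/4}\): for stages with \(r>K^{1/4}\), bound the number of removed cliques by \(2M_BK^2/r\le 2M_BK^{7/4}\) in total (geometric sum from the first such stage with all removed cliques of order \(\ge K^{1/4}/2\)), which produces the \(M_BK^{7/4}\) term. Then at stages \(r\le K^{1/4}\) the term \(40rK^{3/2}\sqrt{\xi_r}/r\) is harmless and \(\xi_r=O(\eta)\) since \(\ln(r+1)=O(\ln K)\) but more precisely \(\ln(r+1)/\ln K\) summed against \(\log r/r\) is still \(O(1/\ln K)\). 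I expect checking that the sum of the first term is genuinely \(O(K^2\eta)\), not \(O(K^2\eta\log K)\), to require care with the weights \(M_r/r\), and the requirement ``sufficiently large \(K\)'' uniform in \(r\ge4\) in Lemma~\ref{lem:remainder-order} is exactly what permits this summation.
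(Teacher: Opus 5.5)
Your proposal is correct and follows the paper's proof: a first greedy stage that removes disjoint cliques of order about \(K^{1/4}\) (at most \(2M_BK^{7/4}\) of them), then dyadic stages in which each \(K_r\)-free remainder is bounded by Lemma~\ref{lem:remainder-order} with \(M=M_r\) from Corollary~\ref{cor:current-cell-order}, the \(K^2\) terms summed through the convergent weights \((h+1)/2^h\) and \((h+1)^2/2^h\), the \(K^{3/2}\) terms accumulated over \(O(\ln K)\) stages, and singletons at the end. One small slip: \(\xi_r=O(\eta)\) is false for \(r\) near \(K^{1/4}\), where \(\xi_r\) is of constant order; however, you only need the weighted bound \(\sum_r(M_r+2)\xi_r/r=O(\eta)\), and you state that correctly.
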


\begin{proof}
Let \(R\) be the largest power of two at most \(K^{1/4}\).
For large \(K\), \(4\le R\le K^{1/4}\) and \(R>K^{1/4}/2\).
Remove \(R\)-cliques greedily until none remain, using at most
\[
 |V(L)|/R\le2M_BK^{7/4}
\]
cliques. Then remove cliques of orders \(R/2,R/4,\ldots,4\),
exhausting each order before passing to the next. Finally cover
the remaining vertices by singletons.

At a stage removing cliques of order \(r/2\), the current graph
is \(K_r\)-free. Apply Corollary~\ref{cor:current-cell-order}
and Lemma~\ref{lem:remainder-order} to bound its order by
\[
 N_r=768(M_r+2)K^2\xi_r+
 40rK^{3/2}\sqrt{\xi_r},
 \qquad
 \xi_r=\frac{\ln(r+1)+\ln\ln K}{\ln K}.
\]
That stage uses at most \(2N_r/r\) cliques. The final singleton
stage costs at most \(N_4\), so all stages after the initial
one cost at most
\begin{equation}\label{eq:dyadic-stage-cost}
 4\sum_{r\in\{4,8,\ldots,R\}}\frac{N_r}{r}.
\end{equation}

Write \(r=2^h\), \(h\ge2\), and \(u=\ln\ln K\).
Since \(M_r+2=5(h+1)\) and \(\ln(r+1)\le(h+1)\ln2\),
the contribution of the terms containing \(K^2\)
in~\eqref{eq:dyadic-stage-cost} is bounded using
\begin{align}
 \sum_{h=2}^{\infty}\frac{h+1}{2^h}&=2,
 &
 \sum_{h=2}^{\infty}\frac{(h+1)^2}{2^h}&=9,
 \label{eq:dyadic-series}\\
 \sum_{r\in\{4,8,\ldots,R\}}
 \frac{(M_r+2)\xi_r}{r}
 &\le\frac{45\ln2+10u}{\ln K}
 =O(\eta).
 \label{eq:summable-profile}
\end{align}
The identities in~\eqref{eq:dyadic-series} follow by
differentiating the geometric series, or by summing its first
two moment identities and subtracting the terms with \(h=0,1\).
There are \(O(\ln K)\) stages, and \(\xi_r=O(1)\)
uniformly for \(r\le K^{1/4}\). The terms containing \(K^{3/2}\)
in~\eqref{eq:dyadic-stage-cost} therefore sum to
\(O(K^{3/2}\ln K)\).

Adding the initial cost proves the proposition. The removed cliques,
together with the final singletons, partition \(V(L)\).
\end{proof}

\begin{theorem}\label{thm:column-logarithmic}
Under the column hypotheses at the start of this section, there is
an absolute constant \(C_1\) such that, for all sufficiently large
\(K=n+w\),
\[
 \chi(J)\le C_1K^2\frac{\ln\ln K}{\ln K}.
\]
\end{theorem}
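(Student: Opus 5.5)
The plan is to combine Proposition~\ref{prop:dyadic-cleanup} with Proposition~\ref{prop:multiscale-cover} and then choose the dyadic parameter \(B\) so that every error term is \(O(K^2\eta)\), where \(\eta=\ln\ln K/\ln K\). First I will fix \(B\) to be the largest power of two with \(B\le \ln K\). For large \(K\) this gives \(4\le B\) and \(B^2\le K\), so Proposition~\ref{prop:dyadic-cleanup} applies. It yields an induced core \(J_*\) with
\[
 \chi(J)\le\chi(J_*)+\frac{5K^2}{B}+2BK,
\]
and every retained cell has fewer than \(B\) components and total order at most \(M_BK\), where \(M_B=3+5\log_2B=O(\ln\ln K)\).

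Next I will bound \(\chi(J_*)\). By~\eqref{eq:complement-cover}, \(\chi(J_*)=\theta(\overline{J_*})\), and Proposition~\ref{prop:multiscale-cover} gives
\[
 \theta(\overline{J_*})=O\bigl(K^2\eta+K^{3/2}\ln K+M_BK^{7/4}\bigr).
\]
Since \(M_B=O(\ln\ln K)\), the last two terms are \(O(K^{7/4}\ln K)\), which is \(o(K^2\eta)\).

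For the cleanup cost, \(B>\tfrac12\ln K\) gives \(5K^2/B\le 10K^2/\ln K\), which is \(O(K^2\eta)\) because \(\ln\ln K\ge1\). Also \(2BK\le2K\ln K=o(K^2\eta)\). Adding the three contributions yields a single absolute constant \(C_1\) for all sufficiently large \(K\).

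The only delicate point is making sure the hypotheses of the cited results are met uniformly. Proposition~\ref{prop:multiscale-cover} relies on Corollary~\ref{cor:current-cell-order} and Lemma~\ref{lem:remainder-order}. These need the core cells to satisfy the cross-nonedge matching condition and \(\omega(J_*)\le K\), and both are inherited because \(J_*\) is an induced subgraph of \(J\). They also need at most \(K\) cells, which holds since \(n\le K\). The ``sufficiently large \(K\)'' thresholds in those results are absolute, since Lemma~\ref{lem:remainder-order} is uniform in \(M\) and \(r\), so they can be absorbed into the threshold here. I expect no genuine obstacle: the main choice is taking \(B\asymp\ln K\), which balances the \(K^2/B\) tail cost against the \(M_B\) growth. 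Any choice with \(B\ge \ln K/\ln\ln K\) and \(B=K^{o(1)}\) would also work.
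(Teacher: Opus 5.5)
Your proposal is correct and follows the paper's proof. Both combine Propositions~\ref{prop:dyadic-cleanup} and~\ref{prop:multiscale-cover} with \(B\) a power of two of polylogarithmic size, then check that each remaining error term is \(o(K^2\eta)\). The only difference is the choice of \(B\): the paper takes the largest power of two at most \(\eta^{-1}=\ln K/\ln\ln K\), so the tail cost \(5K^2/B\) is of order \(K^2\eta\). Your \(B\approx\ln K\) makes that cost \(O(K^2/\ln K)\), and either choice works.
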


\begin{proof}
Set \(\eta=\ln\ln K/\ln K\), and take \(B\) to be the
largest power of two at most \(\eta^{-1}\). For sufficiently
large \(K\),
\[
 B\ge4,\qquad B^2\le K,\qquad
 \frac1{2\eta}<B\le\frac1\eta,\qquad
 M_B=O(\ln\ln K).
\]
Propositions~\ref{prop:dyadic-cleanup} and~\ref{prop:multiscale-cover}
give
\[
 \chi(J)\le
 O\!\left(K^2\eta+K^{3/2}\ln K+M_BK^{7/4}\right)
 +\frac{5K^2}{B}+2BK.
\]
The tails and exceptional cells cost at most
\[
 10K^2\eta+\frac{2K}{\eta}.
\]
Each remaining error term is \(o(K^2\eta)\), since
\[
 \frac{\ln K}{\sqrt K\,\eta}\longrightarrow0,\qquad
 \frac{M_B}{K^{1/4}\eta}\longrightarrow0,\qquad
 K\eta^2\longrightarrow\infty.
\]
The constants and the lower threshold for \(K\) can be chosen
independently of \(n\) and \(w\).
\end{proof}

\begin{proof}[Proof of Theorem~\ref{thm:logarithmic}]
Use the decomposition in Lemma~\ref{lem:decomposition}.
Column \(j\) has
\[
 n_j=j-1,\qquad w_j=k-j+2,\qquad n_j+w_j=k+1.
\]
Its clique number is at most \(k<k+1\), and
Lemma~\ref{lem:column-interface} gives matching cross-nonedges.
Theorem~\ref{thm:column-logarithmic} applies to every column
with the same value \(K=k+1\).
Use disjoint palettes on the \(k-1\) columns and \(k\) further
colours on the sets \(\{a_i\}\cup I_i\). Then
\[
 \chi(G)\le
 k+C_1(k-1)(k+1)^2
 \frac{\ln\ln(k+1)}{\ln(k+1)}
 =O\!\left(k^3\frac{\ln\ln k}{\ln k}\right).
\]
This proves the uniform asymptotic assertion. Bounded clique
numbers are covered separately by Corollary~\ref{cor:baseline}.
\end{proof}

\appendix
\section{A finer explicit cubic bound}
\label{app:refinement}

We prove Theorem~\ref{thm:refined}. The additional saving uses a
three-cell colouring for small deficiencies and a strengthened pair
bound for the remaining cells.

\subsection{An improved pair bound for small tails}

Keep the four-component notation of Section~\ref{sec:pair}:
the largest distinguished component orders on the two sides are
\(a,b\), and the remaining cluster graphs are \(X_0,Y_0\).

\begin{lemma}\label{lem:small-tail-reward}
Suppose \(a,b\le W\), the four opposite component pairs have
matching cross-nonedges and clique number at most \(W\), and
\[
 \chi(X_0)\le\min\{a,W-b\},\qquad
 \chi(Y_0)\le\min\{b,W-a\}.
\]
Then
\[
 \chi(X\cup Y)\le
 \min\left\{a+b,\frac32W-\frac{2W-a-b}{12}+4\right\}.
\]
The tails need not be complete to the core components on the
opposite side.
\end{lemma}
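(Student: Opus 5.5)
The plan is to rerun the argument of Lemma~\ref{lem:separated-tails}, but to use the slack in the pure palettes more carefully, and to observe that completeness of the tails to the opposite core was never essential. Separate palettes give \(a+b\), so we may assume \(a+b>W\). We pad via Lemma~\ref{lem:padding} to components \(A,B,C,D\) with four matchings of size \(m=a+b-W\), and set \(\rho_X=W-b\), \(\rho_Y=W-a\). Note that \(2W-a-b=\rho_X+\rho_Y\) is exactly the total endpoint count divided by two. The tails are coloured from the pure palettes: \(X_0\) receives only first-side pure colours, and those are never used on the second side. Hence any cross-adjacency between \(X_0\) and \(R_1\cup R_2\) is harmless, and the completeness hypothesis may be dropped. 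The tail bounds \(\chi(X_0)\le\rho_X\) and \(\chi(Y_0)\le\rho_Y\) still suffice for the palettes of sizes \(\max\{l_A,l_B,\rho_X\}\) and \(\max\{l_C,l_D,\rho_Y\}\).

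The gain must come from the path colourings. In the proof of Lemma~\ref{lem:separated-tails}, the cost was \(3N/8+3p/8+2\), and the term \(3p/8\) came from charging every path the additive \(3/8\) of Lemma~\ref{lem:path-table}. Since \(p=N-E=2(\rho_X+\rho_Y)\), a reward proportional to \(\rho_X+\rho_Y\) means lowering the per-path surcharge from \(3/8\) to \(3/8-1/24=1/3\). This uses \(\tfrac32W-\tfrac{p}{24}\) and \(p/24=(2W-a-b)/12\).

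I would therefore sharpen Table~\ref{tab:path-choices}. The per-path overhead is \(\text{cost}-3n/8\). By orders, this is \(1/8\) at \(n=1\), \(1/4\) at \(n=2,6\), \(3/8\) at \(n=3,7\), zero at \(n=4,8\), and \(1/8\) at \(n=5\). Only orders \(\equiv3,7\pmod8\) exceed \(1/3\). The first option is an averaging argument: paths of these orders could be recoloured using alternative decompositions (for example, splitting into a triple and pairs) so that they pair up with cheaper paths. More simply, one can allow the loose vertices of a path to share colours with loose vertices of other paths. Loose endpoints in the same labelled component cannot share a colour, but loose vertices in \(A\) and \(B\) lie in distinct first-side components and are anticomplete, so they already share the pure palette. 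That sharing is exactly what the \(\max\) term captures. The weaker coefficient \(1/12\) versus the ideal suggests the author accepts a loss here. So the concrete plan is to show that for each path, the two options can be chosen so the average overhead over all paths is at most \(1/3\). The extra rounding errors from Lemma~\ref{lem:round-four} and \eqref{eq:pure-palette-error} are absorbed into the constant \(4\) instead of \(2\).

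The main obstacle is this overhead-averaging step for paths of order \(\equiv3\pmod 4\): the table's cost \(3n/8+3/8\) is tight there. If no cheaper individual colouring exists, I would fall back on a counting argument. Each such path has its two endpoints on the same side, one in each of two components, because the label sequence \(A,C,B,D\) has period four. The surplus \(1/24\) per path must then be paid by mixing loose vertices across sides in a second colour class. This mixing is possible because an \(A\)-endpoint and a \(C\)-endpoint of different paths are matched only to their path neighbours, so they are adjacent in \(G\) unless they form a matched pair. If even this fails, I would check whether the true reward can be obtained from cheaper orders alone by a convexity argument in \(\rho_X+\rho_Y\), accepting the \(+4\) error.
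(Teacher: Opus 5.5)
You have the setup right. After padding, the tails only ever receive their own side's pure palette, so completeness to the opposite core is irrelevant. The goal is indeed to lower the total path surcharge from $3p/8$ to $3p/8-p/24$, where $p=N-E=2(\rho_X+\rho_Y)$. There are two slips. First, $2W-a-b=\rho_X+\rho_Y$ is a quarter of the endpoint count, not half. Second, the overhead at $n=7$ is $5/2-21/8=-1/8$, so only orders $\equiv3\pmod 8$ are tight, i.e.\ three-vertex paths once eight-vertex prefixes are stripped.

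The real problem is that the proposal never produces the saving, and that saving is the whole content of the lemma. Your concrete fallback is impossible. Two loose vertices on opposite sides in different paths of $\Gamma$ are never a matched pair, so they are adjacent and cannot share a colour; the sentence you offer in support of the mixing actually rules it out. Sharing among same-side loose vertices is already what the pure palettes do. Nor is the obstacle a lack of cheap colourings. One colour on the whole triple colours a three-vertex path at cost $1$, but it pushes $l_A,l_B$ below the floor $\rho_X$ that the tail $X_0$ requires, so on its own it saves nothing.

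\textbf{Missing step 1: the second choice and why it must be balanced.} Give each three-vertex path a second binary choice: \emph{whole triple as one colour} versus \emph{all three vertices loose}. Its mean effective cost is $5/4$ instead of $3/2$. The side effect is that its mean loose vector exceeds half the endpoint vector by half the unit vector of the middle label. This unbalances $l_A$ against $l_B$ (or $l_C$ against $l_D$) and breaks~\eqref{eq:pure-palette-error}. So you may use it only on a subcollection with equally many $A$- and $B$-middles and equally many $C$- and $D$-middles.

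\textbf{Missing step 2: the balanced subcollection is large.} First remove cycles and eight-vertex prefixes; each removal costs at most $3/2$ per unit decrease of $W$ and preserves $\rho_X,\rho_Y$. Let $h$ count the three-vertex paths and $g$ the others. A balanced subcollection then has size $h_*\ge(h-2g)_+$, for the following reasons.
\begin{itemize}
\item The first-side labels along any path alternate $A,B$, so each path contributes at most one in absolute value to $\#A-\#B$.
\item A three-vertex path with middle $A$ or $B$ contributes exactly $\pm1$, and one with middle $C$ or $D$ contributes zero.
\item The total of $\#A-\#B$ is zero, since $A$ and $B$ have equal order.
\end{itemize}
Hence $|h_A-h_B|\le g$, and likewise $|h_C-h_D|\le g$.

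\textbf{Missing step 3: the palette estimate survives.} With a balanced subcollection, the mean loose vector is $(R_X,R_X,R_Y,R_Y)$ with $R_X\ge\rho_X$ and $R_Y\ge\rho_Y$. For such a vector the max-versus-average estimate still holds.

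\textbf{Missing step 4: rounding with unequal costs.} The two choices now have unequal costs, so Lemma~\ref{lem:round-four} must be upgraded. Minimise effective cost over the choice polytope subject to the loose-vector constraint, then round an optimal extreme point. This loses at most $1$ in cost and $6$ in $\ell_1$-norm, which is where the constant $4$ comes from.

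\textbf{Missing step 5: the count.} Every other row of Table~\ref{tab:path-choices} beats $3n/8+3/8$ by at least $1/8$. The total saving is therefore
\[
g/8+h_*/4\ge g/8+(p-3g)_+/4\ge p/24=(2W-a-b)/12.
\]
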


\begin{proof}
Separate palettes give \(a+b\). If \(a+b\le W\), this also
implies the second bound. Otherwise use Lemma~\ref{lem:padding},
and put
\[
 m=a+b-W,\qquad \rho_X=W-b,\qquad \rho_Y=W-a.
\]
As before, the padded components \(A,B,C,D\) have orders
\(a,a,b,b\), and their nonedge graph \(\Gamma\) has
\(N=2a+2b\) vertices and \(E=4m\) edges.

First remove the cycle components. A cycle of order \(4s\)
uses at most \(3s/2\) colours and removes \(s\) vertices
of each label and \(s\) edges of each matching.
For every path of order greater than eight, remove its first
eight vertices and colour them as \(123,456,78\).
The seven internal edges and the edge to the remaining path
remove two edges of each matching. Thus this step reduces
\(a,b,m,W\) by two, and costs three colours.
Both operations preserve \(\rho_X,\rho_Y\).
The number of colours used by these operations is at most \(3/2\)
times the decrease in \(W\).

The remaining graph consists of paths of order at most eight.
The two components on each side still have equal orders, and all
four cross-nonedge matchings still have the same size.
The quantity
\[
 \rho_X+\rho_Y=2W-a-b
\]
is unchanged, so it suffices to establish the claimed bound for
this remainder, using the unchanged bounds on the chromatic numbers
of the tails.

Use Table~\ref{tab:path-choices}. For every row except \(n=3\),
the effective cost is at most \(3n/8+3/8-1/8\).
Let \(g\) be the number of paths not of order three and \(h\)
the number of three-vertex paths. Their total number is
\begin{equation}\label{eq:path-number-deficit}
 p=g+h=N-E=2(\rho_X+\rho_Y).
\end{equation}
Let \(h_A,h_B,h_C,h_D\) count the three-vertex paths by their
middle label. For every path whose order is not three, the numbers
of vertices labelled \(A\) and \(B\) differ by at most one; the same
holds for \(C\) and \(D\). The equal total numbers of vertices
in the two components of each side therefore imply
\[
 |h_A-h_B|\le g,\qquad |h_C-h_D|\le g.
\]
Select a collection of three-vertex paths with equal middle
counts in \(A,B\), and equal middle counts in \(C,D\),
as large as possible. Its size satisfies
\[
 h_*=h-|h_A-h_B|-|h_C-h_D|\ge(h-2g)_+.
\]

On a selected path, use either its whole triple as one colour
or leave all three vertices loose. The two choices have effective
costs \(1\) and \(3/2\), so their mean is \(5/4\), a saving
of \(1/4\) from the ordinary row. Their mean loose vector
adds half the middle-label unit vector to the ordinary mean.
The balanced middle counts make the total mean loose vector
\[
 r'=(R_X,R_X,R_Y,R_Y),\qquad
 R_X\ge\rho_X,\quad R_Y\ge\rho_Y.
\]
The mean effective cost is at most
\begin{equation}\label{eq:slack-effective}
 \frac32W-\frac g8-\frac{h_*}{4}.
\end{equation}

Constrain the four loose counts to equal \(r'\), and minimise
effective cost over the binary-choice polytope.
The equiprobable choices are feasible. An optimal extreme point
has at most four fractional coordinates, by the argument of
Lemma~\ref{lem:round-four}. Round them to nearest endpoints.
For ordinary rows the vector difference has norm at most two
and the effective-cost difference is zero. For a new
three-vertex row these bounds are three and \(1/2\).
Thus the resulting loose vector \(l\) satisfies
\(\normone{l-r'}\le6\), and effective cost increases by at most one.

Use pure palettes of sizes
\(\max\{l_A,l_B,\rho_X\}\) and
\(\max\{l_C,l_D,\rho_Y\}\).
For \(R\ge\rho\ge0\) and \(u,v\ge0\),
\[
 \max\{u,v,\rho\}-\frac{u+v}{2}
 \le\frac{|u-R|+|v-R|}{2}.
\]
The number of colours in the two pure palettes consequently exceeds
half the total number of loose vertices by at most three. They cover the corresponding tails and
are disjoint from the internal colours and from each other,
so arbitrary opposite-side edges are respected.
Together with~\eqref{eq:slack-effective}, the total cost is at most
\[
 \frac32W-\frac g8-\frac{h_*}{4}+4.
\]
Finally,
\[
 \frac g8+\frac{h_*}{4}
 \ge\frac g8+\frac{(p-3g)_+}{4}\ge\frac p{24}.
\]
If \(g\ge p/3\), the first term suffices; otherwise the middle
expression is \(p/4-5g/8\ge p/24\).
Use~\eqref{eq:path-number-deficit}, add back the removed pieces,
and restrict the colouring to the original vertices to complete the proof.
\end{proof}

\subsection{Three cells with large components}

Let \(X,Y,Z\) be three distinct cells in one column of width
\(w\). Choose largest components \(Q\subseteq X\),
\(R\subseteq Y\), \(S\subseteq Z\), of orders
\[
 a=w-e_X,\qquad b=w-e_Y,\qquad c=w-e_Z,
 \qquad D=e_X+e_Y+e_Z.
\]
The largest-component deficiencies \(e_X,e_Y,e_Z\) are
nonnegative. The pair clique cap is \(w+1\), and the triple
cap is
\[
 W=w+2.
\]
For this subsection, a pair of cells with deficiencies \(e,f\)
and second-largest component orders \(x,y\) is called \emph{high} if
\begin{equation}\label{eq:high-pair}
 x+y\ge w+\max\{e,f\}+6.
\end{equation}

\begin{lemma}\label{lem:high-pair-tails}
For a high pair of cells, every third or later component of one
cell is complete to the two largest components of the other.
The respective tails have chromatic numbers at most \(f+1,e+1\).
\end{lemma}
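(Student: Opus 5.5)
The plan is to apply Corollary~\ref{cor:second-complete} and Lemma~\ref{lem:overlap-obstruction} with the pair clique cap \(W=w+1\). That cap is available by Lemma~\ref{lem:column-interface}, which also supplies the matching property for cross-nonedges. Write the two cells as \(X,Y\), with largest components \(Q_1,R_1\) of orders \(a=w-e\), \(b=w-f\), second components \(Q_2,R_2\) of orders \(x,y\), and tails \(X_0,Y_0\).

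\emph{Step 1: both second components are large.} Since \(x\le a=w-e\), the high-pair condition~\eqref{eq:high-pair} gives
\[
 y\ge w+\max\{e,f\}+6-(w-e)\ge e+f+6 .
\]
Symmetrically, \(x\ge e+f+6\), so \(\min\{x,y\}\ge e+f+6\). Hence
\[
 a+b+\min\{x,y\}\ge 2w+6>2W+3=2w+5 .
\]
Corollary~\ref{cor:second-complete} now says that \(X_0\) is complete to \(R_2\) and \(Y_0\) is complete to \(Q_2\).

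\emph{Step 2: completeness to the largest components.} I would rerun the overlap argument with the roles of \(R_1\) and \(R_2\) exchanged. By Lemma~\ref{lem:column-interface}, the matchings from \(R_2\) to \(Q_1\) and to \(Q_2\) have at least \(a+y-W\) and \(x+y-W\) edges. Their domains in \(R_2\) therefore overlap in at least
\[
 a+x+y-2W\ge (w-e)+w+\max\{e,f\}+6-2w-2\ge 4
\]
vertices. Suppose some \(u\in X_0\) missed a vertex \(r\in R_1\). Lemma~\ref{lem:overlap-obstruction}, applied with \(Q=Q_1\), \(Q'=Q_2\), \(T\) the component of \(u\), \(R=R_1\) and \(D=R_2\), allows at most three such overlap vertices, which is a contradiction. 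The symmetric computation, with domains in \(Q_2\) and overlap at least \(b+x+y-2W\ge4\), shows that \(Y_0\) is complete to \(Q_1\). This gives the first assertion.

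\emph{Step 3: tail bounds.} Each component of \(X_0\) is complete to \(R_1\). Its union with \(R_1\) is therefore a clique of order at most \(W\). So every tail component has order at most \(W-b=f+1\), and since \(X_0\) is a cluster graph, \(\chi(X_0)\le f+1\). Likewise \(\chi(Y_0)\le W-a=e+1\).

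The only real obstacle is Step 2. Corollary~\ref{cor:second-complete} as stated only delivers completeness to the second components. The point to verify is that the overlap inside the \emph{second} component still exceeds three. The inequality above shows that the slack \(+6\) in~\eqref{eq:high-pair} is exactly what makes this work, after losing \(2\) to \(2W=2w+2\) and absorbing \(e\) against \(\max\{e,f\}\).
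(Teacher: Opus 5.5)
Your proof is correct and follows the paper's argument: in \(R_2\), the domains of the matchings from \(Q_1,Q_2\) overlap in at least \(x+y-w-e-2\ge4\) vertices, so Lemma~\ref{lem:overlap-obstruction} forces each tail to be complete to \(R_1\), and the pair cap \(w+1\) then gives the bounds \(f+1,e+1\). The only difference is in how completeness to the second component \(R_2\) is obtained: the paper notes that interchanging \(R_1,R_2\) only increases the overlap (since \(b\ge y\)), whereas you invoke Corollary~\ref{cor:second-complete} after checking \(\min\{x,y\}\ge e+f+6\), which is the same overlap computation packaged as a citation.
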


\begin{proof}
Let the first-side largest components have orders \(w-e,x\),
and the second-side ones have orders \(w-f,y\).
In the second component of the second side, the domains of
the matchings to the two first-side components overlap in at least
\[
 ((w-e)+y-(w+1))+(x+y-(w+1))-y
 =x+y-w-e-2\ge\max\{e,f\}-e+4\ge4
\]
vertices. By Lemma~\ref{lem:overlap-obstruction}, a later
first-side component is complete to the largest second-side
component. Interchanging the two second-side components only
increases the overlap estimate, so it is complete to both.
Reverse the sides for the other assertion. The pair clique cap
gives the tail bounds \(f+1,e+1\).
\end{proof}

\begin{lemma}\label{lem:third-anchor-conflict}
Let \(P\subseteq X,T\subseteq Y\) be fixed components.
Label a vertex by its unique nonneighbour in the anchor \(S\),
whenever such a nonneighbour exists.
Among labels present in both \(P,T\), at most \(3(W-c)=3(e_Z+2)\)
have their two labelled vertices adjacent.
\end{lemma}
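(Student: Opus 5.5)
The plan is to produce a clique in \(X\cup Y\cup Z\) that is too large, using only the triple clique cap \(W=w+2\) from Lemma~\ref{lem:column-interface} and the matching property of cross-nonedges; I do not expect to need \((P_2\cup P_3)\)-freeness beyond what those facts already encode.

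\emph{Setup.} Let \(\mathcal L\) be the set of labels \(s\in S\) such that:
\begin{itemize}
\item \(s\) is the unique \(S\)-nonneighbour of some \(p_s\in P\);
\item \(s\) is the unique \(S\)-nonneighbour of some \(t_s\in T\);
\item \(p_s t_s\) is an edge.
\end{itemize}
Because cross-nonedges between two components form a matching, the vertex carrying a given label is unique on each side. So \(p_s,t_s\) are well defined, and the maps \(s\mapsto p_s\), \(s\mapsto t_s\) are injective.

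\emph{A candidate clique.} For any \(L'\subseteq\mathcal L\), consider
\[
 K(L')=\{p_s:s\in L'\}\cup\{t_s:s\in L'\}\cup(S\setminus L').
\]
Three of the four adjacency requirements hold automatically.
\begin{itemize}
\item \(P\), \(T\) and \(S\) are each cliques.
\item Each \(p_s\) and each \(t_s\) has \(s\) as its only nonneighbour in \(S\), so both are complete to \(S\setminus L'\).
\item \(p_s t_s\) is an edge by the choice of \(\mathcal L\).
\end{itemize}
The only possible failures are nonedges \(p_s t_{s'}\) with \(s\ne s'\). If there are none, \(K(L')\) is a clique of order \(c+|L'|\), and the triple cap gives \(c+|L'|\le W\).

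\emph{Bounding the conflicts.} Define a conflict graph on \(\mathcal L\): join \(s\) and \(s'\) when \(p_s t_{s'}\) or \(p_{s'}t_s\) is a nonedge. Each \(p_s\) has at most one nonneighbour in \(T\), and each \(t_s\) at most one in \(P\), by the matching property. Hence every label has conflict degree at most two. A graph of maximum degree two has an independent set of order at least \(\lceil|\mathcal L|/3\rceil\); take it greedily, or colour it with three colours. Let \(L'\) be such an independent set. Then \(K(L')\) is a clique, so
\[
 c+\left\lceil\frac{|\mathcal L|}{3}\right\rceil\le W,
\]
which yields \(|\mathcal L|\le3(W-c)\). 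Since \(c=w-e_Z\) and \(W=w+2\), this equals \(3(e_Z+2)\).

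\emph{Main obstacle.} The only delicate point is checking that the triple cap applies to \(K(L')\). This holds because \(K(L')\) lies in three distinct cells of one column, so Lemma~\ref{lem:column-interface} bounds its clique number by \(w+2\). The degree-two bound on the conflict graph must also be checked carefully, using the matching property for the pair \(P,T\) in both directions.
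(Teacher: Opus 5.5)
Your proof is correct and is essentially the paper's argument. Both build a conflict graph of maximum degree two on the adjacent-pair labels and take an independent set containing at least a third of them. The vertices of $P$ and $T$ carrying those labels, together with $S$ minus the labels themselves, form a clique of order $c+|\mathcal{I}|$, which the triple cap $W=w+2$ from Lemma~\ref{lem:column-interface} bounds.
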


\begin{proof}
Let \(\mathcal J\) be the conflicting labels, and write
\(p_i,t_i\) for their vertices. On \(\mathcal J\), join
distinct \(i,j\) if \(p_it_j\) or \(p_jt_i\) is a nonedge.
By the matching condition, this auxiliary graph has maximum degree
at most two. A greedy three-colouring gives an independent
set \(\mathcal I\) of size at least \(|\mathcal J|/3\).
The vertices \(p_i,t_i\), \(i\in\mathcal I\), form a clique:
the same-label pairs are edges by definition, and all remaining
cross pairs are edges by independence in the auxiliary graph.
Adjoining \(S\) without the \(|\mathcal I|\) label vertices
gives a clique of order \(c+|\mathcal I|\). The triple cap
implies \(|\mathcal I|\le W-c\).
\end{proof}

The complement on \(Q\cup R\cup S\) has maximum degree at
most two. Its cycles have labels repeating with period three.
Let \(\ell\) be the number of its triangle components, and
write them as
\[
 \{q_i,r_i,s_i\},\qquad i\in[\ell].
\]
Put \(z=w-\ell\).

\begin{lemma}\label{lem:common-labels}
The parameter \(z=w-\ell\) satisfies
\[
 \max\{e_X,e_Y,e_Z\}\le z\le\frac43D+6.
\]
\end{lemma}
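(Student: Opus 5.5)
The plan is to work entirely with the nonedge graph \(\Gamma\) of \(G\) on \(Q\cup R\cup S\), that is, the complement restricted to these three components. Since \(Q,R,S\) are cliques, every edge of \(\Gamma\) is a cross-nonedge between two of them. By Lemma~\ref{lem:column-interface}, each of the three cross-nonedge graphs is a matching, so \(\Gamma\) has maximum degree at most two. Along any path or cycle of \(\Gamma\) consecutive vertices lie in different components, and a vertex's two neighbours lie in the two other components. Hence the labels \(Q,R,S\) repeat with period three. In particular every cycle has length divisible by three, a cycle of length three is one of the \(\ell\) triangle components, and every other cycle has length at least six. An independent set of \(\Gamma\) is a clique of \(G\), so the triple cap of Lemma~\ref{lem:column-interface} gives
\[
 \alpha(\Gamma)\le W=w+2 .
\]

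The lower bound \(z\ge\max\{e_X,e_Y,e_Z\}\) is immediate. Each triangle component uses exactly one vertex of each of \(Q,R,S\), and distinct triangles are disjoint. Therefore \(\ell\le\min\{a,b,c\}=w-\max\{e_X,e_Y,e_Z\}\).

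For the upper bound I would estimate \(\alpha(\Gamma)\) from below component by component. Let the non-triangle part have \(n'=|Q|+|R|+|S|-3\ell=3w-D-3\ell\) vertices. I would use the following facts:
\begin{itemize}
 \item a triangle contributes \(1\);
 \item a path of order \(t\) contributes \(\lceil t/2\rceil\ge t/2\);
 \item an even cycle of order \(t\) contributes \(t/2\);
 \item an odd cycle of order \(t\) contributes \((t-1)/2\).
\end{itemize}
Odd cycles other than triangles have order divisible by three and odd, hence at least nine. So the number of odd non-triangle cycles is at most \(n'/9\). Summing the contributions,
\[
 w+2\ge\alpha(\Gamma)\ge \ell+\frac{n'}2-\frac{n'}{18}
 =\ell+\frac49\,(3w-D-3\ell).
\]
Rearranging gives \(\ell/3\ge w/3-4D/9-2\), that is, \(\ell\ge w-\tfrac43D-6\), which is exactly \(z\le\tfrac43D+6\).

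The main point to get right is the accounting for the odd cycles. A crude bound \(\lfloor t/2\rfloor\ge 5t/12\), valid for \(t\ge6\), only yields the constant \(5/3\). The sharper constant comes from observing that the loss of \(1/2\) per cycle occurs only for odd cycles, and these have length at least nine. I would also check the degenerate cases: empty components (order zero) are harmless, and \(D\) may be large, in which case the inequality is trivial since \(z\le w\) always holds.
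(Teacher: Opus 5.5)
Your proof is correct and follows the paper's argument: the lower bound comes from \(\ell\le\min\{a,b,c\}\), and the upper bound from \(\alpha\le W=w+2\) on the nonedge graph, since every nontriangle component has independence ratio at least \(4/9\) (odd nontriangle cycles have length at least nine). Subtracting \(1/2\) per odd cycle and bounding their number by \(n'/9\) is only a rephrasing of the paper's per-component ratio bound.
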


\begin{proof}
The lower bound follows from \(\ell\le\min\{a,b,c\}\).
Every nontriangle component of the complement has independence
number at least four ninths of its order. Paths and even cycles
have ratio at least one half; an odd cycle other than a triangle
has length at least nine and ratio at least \(4/9\).
The complement has \(3w-D\) vertices and independence number
at most \(W=w+2\). Hence
\[
 w+2\ge\ell+\frac49(3w-D-3\ell),
\]
which gives the upper bound on \(z\).
\end{proof}

\begin{lemma}\label{lem:uniform-trimming}
In every component \(T\) of \(X\), deletion of at most
\[
 R_X=z+2\min\{e_Y,e_Z\}+6
\]
vertices leaves vertices indexed by distinct \(i\in[\ell]\),
with vertex \(i\) nonadjacent to both \(r_i,s_i\).
The corresponding deletion bounds for components of \(Y,Z\) are
\[
 R_Y=z+2\min\{e_X,e_Z\}+6,\qquad
 R_Z=z+2\min\{e_X,e_Y\}+6.
\]
\end{lemma}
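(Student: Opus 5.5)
We would prove the statement for a component \(T\) of \(X\); the bounds \(R_Y,R_Z\) then follow by permuting the roles of the three cells. For each \(i\in[\ell]\), the pairs \((T,R)\) and \((T,S)\) have matching cross-nonedges by Lemma~\ref{lem:column-interface}. Each \(r_i\) therefore has at most one nonneighbour in \(T\), and the same holds for each \(s_i\). We call an index \(i\) \emph{good} for \(T\) if there is a vertex \(t\in T\) nonadjacent to both \(r_i\) and \(s_i\). Such a \(t\) is unique, because \(r_i\) has at most one nonneighbour in \(T\). Distinct good indices then give distinct vertices, since \(t\) has at most one nonneighbour in \(R\). The vertices of \(T\) not of this form are deleted. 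The task is to show that at most \(R_X\) vertices of \(T\) are not the common nonneighbour of some pair \(r_i,s_i\).

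Take \(t\in T\). We split \(T\) into the following classes.
\begin{enumerate}
\item[(i)] Vertices complete to \(R\), or complete to \(S\).
\item[(ii)] Vertices whose nonneighbour in \(R\) lies outside \(\{r_1,\ldots,r_\ell\}\), or whose nonneighbour in \(S\) lies outside \(\{s_1,\ldots,s_\ell\}\).
\item[(iii)] Vertices nonadjacent to some \(r_i\) and to some \(s_j\) with \(i\ne j\).
\end{enumerate}
Every remaining vertex is good. For classes (i) and (ii) we use the clique caps. Since \(\omega(T\cup R)\le w+1\), the matching between \(T\) and \(R\) has at least \(|T|+b-(w+1)\) edges. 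Hence at most \(w+1-b=e_Y+1\) vertices of \(T\) are complete to \(R\); similarly at most \(e_Z+1\) are complete to \(S\). The vertices of \(R\) outside the triangles number \(b-\ell=z-e_Y\), and those of \(S\) number \(z-e_Z\). This crude count gives roughly \(2z+e_Y+e_Z\), which is too weak. The target \(z+2\min\{e_Y,e_Z\}+6\) suggests charging everything to one side only. Assume \(e_Y\le e_Z\), and bound the vertices whose \(R\)-nonneighbour is either missing or non-triangle by \((e_Y+1)+(z-e_Y)\). The remaining vertices have a nonneighbour \(r_i\). This reduces the problem to bounding the \(t\) with \(t\not\sim r_i\) for which \(s_i\) is adjacent to \(t\).

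The main obstacle is this last bound, which covers class (iii) and the case \(t\sim s_i\); we expect it to be at most \(2e_Y+O(1)\) vertices. We would use Lemma~\ref{lem:third-anchor-conflict} with \(R\) as anchor and the pair \(T,S\). Label \(t\) and \(s_i\) by their common nonneighbour \(r_i\). The label is then present in both \(T\) and \(S\), and these two labelled vertices are adjacent exactly when \(i\) fails to be good. The lemma, applied with anchor \(R\), bounds such conflicting labels by \(3(W-b)=3(e_Y+2)\). This gives a bound of the form \(z+O(e_Y)+O(1)\).

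To get the sharper constant \(2\) in place of \(3\), we would redo the auxiliary-graph argument of Lemma~\ref{lem:third-anchor-conflict}. When the vertices \(q_i\) of the triangles are available, we would exploit that the auxiliary graph consists of paths, whose independence ratio is \(1/2\). If this refinement fails, we would accept the constant from the lemma and adjust \(R_X\) accordingly. Finally, interchanging the roles of \(Y\) and \(Z\) gives the minimum over the two deficiencies.
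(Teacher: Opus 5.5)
Your case analysis is sound, but it yields a weaker bound than \(R_X\). Take \(e_Y\le e_Z\). The deleted vertices of \(T\) fall into three classes:
\begin{itemize}
\item those complete to \(R\): at most \(w+1-b=e_Y+1\), by the pair cap;
\item those whose nonneighbour in \(R\) lies outside \(\{r_1,\ldots,r_\ell\}\): at most \(b-\ell=z-e_Y\);
\item the conflicts \(t\not\sim r_i\), \(t\sim s_i\): at most \(3(W-b)=3e_Y+6\), by Lemma~\ref{lem:third-anchor-conflict} with anchor \(R\).
\end{itemize}
This totals \(z+3e_Y+7\), which exceeds \(R_X=z+2e_Y+6\) by \(e_Y+1\). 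So ``adjusting \(R_X\)'' means not proving the lemma, and it would also change the constants in Proposition~\ref{prop:three-cell}.

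Your proposed refinement does not close the gap. The auxiliary graph on conflicting labels is the graph of the partial injection sending \(i\) to the \(j\) with \(t_i\not\sim s_j\). Nothing forces it to be a union of paths. Take \(t_1,t_2,t_3\in T\), \(q_i\in Q\), \(r_i\in R\), \(s_i\in S\), whose only nonedges are the \(T\)--\(Q\) pairs, \(q_ir_i,q_is_i,r_is_i\), \(t_ir_i\), and \(t_1s_2,t_2s_3,t_3s_1\). These twelve vertices induce no \(P_2\cup P_3\), so the \(q_i\) give no obstruction. Yet the labels \(1,2,3\) span a triangle in your auxiliary graph. This is a nine-cycle in the complement on \(T\cup R\cup S\), which is exactly where the factor \(3\) in the paper's count is attained.

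The missing idea is to use the triple cap once for the vertices complete to \(R\) and the conflicts together, instead of paying the pair cap separately for the former.
\begin{itemize}
\item Add each \(u\in T\) complete to \(R\) as an extra node of the auxiliary graph, joined to \(i\) when \(u\not\sim s_i\).
\item Such a node has degree at most one. A label still has degree at most two, since \(t_i\) has at most one nonneighbour in \(S\) and \(s_i\) at most one in \(T\).
\item Take an independent set consisting of extra nodes \(A'\) and labels \(\mathcal I\). Then \(A'\cup\{t_i,s_i:i\in\mathcal I\}\cup(R\setminus\{r_i:i\in\mathcal I\})\) is a clique of order \(b+|A'|+|\mathcal I|\le W\).
\item Since the graph has maximum degree two, its order is at most three times its independence number. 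So the two classes together contain at most \(3(W-b)=3e_Y+6\) vertices.
\end{itemize}
The deletions then number at most \(z-e_Y+3e_Y+6=z+2e_Y+6\), as required.

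The paper reaches the same count globally. It works in the complement \(L_T\) on \(T\cup R\cup S\), whose components are paths and cycles of length divisible by three. It bounds the \(T\)-vertices outside triangle components by \(3E+h\), where \(E=\alpha(L_T)-(b+c-p)\le W-(b+c-p)\) and \(h=b+c-2p\). It then deletes the \(T\)-vertices of the at most \(p-\ell\) triangles that use noncommon \(R\)--\(S\) nonedges, and relaxes using \(p\le\min\{b,c\}\).
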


\begin{proof}
Consider the complement \(L_T\) on \(T\cup R\cup S\).
Let the \(R\)--\(S\) matching have \(p\) edges and let
\(h=b+c-2p\). Its anchor subgraph has independence number
\(b+c-p\). Set
\[
 E=\alpha(L_T)-(b+c-p)\ge0.
\]
At most \(3E+h\) vertices of \(T\) fail to lie in triangle
components of \(L_T\). To prove this, take a path component
with \(t\) vertices in \(T\), \(p_0\) anchor matching edges,
and \(h_0\) unmatched anchor vertices. Its increase in independence
number over the anchor subgraph is
\[
 E_0=\left\lceil\frac{t+2p_0+h_0}{2}\right\rceil-(p_0+h_0)
 =\left\lceil\frac{t-h_0}{2}\right\rceil.
\]
It is nonnegative because the anchor graph is induced.
Thus \(t\le2E_0+h_0\le3E_0+h_0\).
A triangle contributes no bad vertex. Any other cycle has
length \(3t\), \(t\ge2\), and
\(h_0=0,E_0=\lfloor t/2\rfloor\), so \(t\le3E_0\).
Summing proves the assertion.

Since \(\alpha(L_T)\le W\), the number of bad vertices is at most
\begin{equation}\label{eq:bad-triangle-vertices}
 3(W-b-c+p)+(b+c-2p)=3W-2(b+c)+p.
\end{equation}
Among the good triangles, at most \(p-\ell\) use an
\(R\)--\(S\) matching edge outside the common labels.
Delete their \(T\)-vertices too. Using \(p\le\min\{b,c\}\),
the total deletion count is at most
\[
 3W-2(b+c)+2p-\ell
 \le3W-2\max\{b,c\}-\ell
 =z+2\min\{e_Y,e_Z\}+6.
\]
The remaining triangles give the asserted distinct labels.
The other sides are symmetric.
\end{proof}

\begin{proposition}\label{prop:three-cell}
The union of the three cells satisfies
\[
 \chi(X\cup Y\cup Z)\le
 \begin{cases}
 w+\dfrac{23}{3}D+48,&\text{if some pair is high},\\[4pt]
 2w+e_1+2e_2+11\le2w+D+11,&\text{otherwise},
 \end{cases}
\]
where \(e_1\le e_2\le e_3\) are the three deficiencies in order.
\end{proposition}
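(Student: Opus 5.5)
The two cases need different constructions. In both I would reuse the anchor idea of Lemma~\ref{lem:anchor-bound}: some large component is coloured injectively, and vertices elsewhere reuse the colour of their unique anchor nonneighbour whenever that is safe.

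\emph{High case.} I would use the \(\ell\) triangle components \(\{q_i,r_i,s_i\}\) of the complement on \(Q\cup R\cup S\) as a shared palette. Each triangle is independent in \(J\), so colour \(i\) goes to all of \(q_i,r_i,s_i\); this uses \(\ell=w-z\le w\) colours.
\begin{itemize}
\item For every other component \(T\), say of \(X\), apply Lemma~\ref{lem:uniform-trimming}. After deleting at most \(R_X\) vertices, each surviving vertex of \(T\) is indexed by a distinct \(i\) and is nonadjacent to \(r_i,s_i\). It is also nonadjacent to \(q_i\), because distinct components of a cell are anticomplete. So it may take colour \(i\) as far as the core is concerned.
\item Two components of different cells may now both use colour \(i\) on adjacent vertices. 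Such conflicts are exactly the conflicting labels of Lemma~\ref{lem:third-anchor-conflict}, taking as anchor the core component of the third cell. Their number is at most \(3(e+2)\) for the relevant deficiency \(e\).
\item The high pair enters through Lemma~\ref{lem:high-pair-tails}. Later components of one high-pair cell are complete to both large components of the other, so their tails have chromatic number at most \(e+1\) or \(f+1\). This should make the interaction between tails a clique-like structure, so that conflicts and deletions can be handled by pure palettes whose sizes do not grow with the number of components.
\end{itemize}
The cost is then \(\ell\) plus pure palettes of sizes about \(R_X+R_Y+R_Z\), the conflict reserves, and the tail bounds. Using \(\ell\le w\) and \(z\le\frac43D+6\) from Lemma~\ref{lem:common-labels}, the terms sum to about \(3z+O(D)+O(1)\). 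I would aim at \(w+\frac{23}{3}D+48\) by bookkeeping the \(2\min\{\cdot\}\) terms against \(D\).

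\emph{Non-high case.} I would pick the cell with smallest deficiency \(e_1\), say \(X\), and colour \(Q\) injectively with \(w-e_1\) colours plus a reserve.
\begin{itemize}
\item By the pair cap \(w+1\) and Lemma~\ref{lem:column-interface}, each component of \(Y\) or \(Z\) has at most \(e_1+1\) vertices not matched to \(Q\). These go on one shared reserve palette, as in Lemma~\ref{lem:anchor-bound}.
\item Matched vertices of \(Y\) and \(Z\) that carry the same \(Q\)-label and are adjacent are conflicting labels; I would bound them via Lemma~\ref{lem:third-anchor-conflict} with \(Q\) as anchor.
\item Any excess beyond this bound would be recoloured from a palette of size about \(2e_2\) plus a constant. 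Here the non-high inequality \(x+y<w+\max\{e,f\}+6\) is what forces the second components of the pairs involving \(Y\) to be too small to produce many conflicts.
\item The remaining components of \(X\) get fresh colours, at most \(w\) of them.
\end{itemize}
The total \(w+(e_1+1)+w+2e_2+O(1)\) should give \(2w+e_1+2e_2+11\), and the final inequality uses \(e_1+2e_2\le D\).

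The main obstacle I expect is the conflict control across many components. Lemma~\ref{lem:third-anchor-conflict} bounds conflicts only for one fixed pair of components \(P,T\), while a single colour may appear in many components of \(Y\) and of \(Z\). I would first try to show, using the \((P_2\cup P_3)\)-obstruction of Lemma~\ref{lem:overlap-obstruction} as in Lemma~\ref{lem:high-pair-tails}, that all conflicts are confined to the two largest components of each cell. I would then treat the remaining components with tail palettes of size at most the deficiency plus one.
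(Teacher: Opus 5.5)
The proposal has a genuine gap. You correctly flag conflict control across many components as the main obstacle, but you never resolve it, and the remedy you sketch does not work in either case.

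\textbf{High case.} Your ingredients are the paper's: the \(\ell\) label colours, Lemmas~\ref{lem:uniform-trimming}, \ref{lem:third-anchor-conflict} and~\ref{lem:high-pair-tails}, and pure-cell palettes. What is missing is the argument that bounds deletions per component. Confining conflicts to the two largest components of \emph{every} cell is not available for the cell outside the high pair, and it is not needed.

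With \(Y,Z\) high, Lemma~\ref{lem:high-pair-tails} makes the later components of \(Y\) complete to \(S\), and those of \(Z\) complete to \(R\). So they carry no labels. The paper keeps only the two largest components of \(Y\) and \(Z\) and puts the rest into the pure palettes. This suffices because \(R_Y\ge z+6\ge e_Z+1\) and \(R_Z\ge e_Y+1\).

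All components of \(X\) are kept. They are mutually anticomplete, and each meets only the second components of \(Y\) and \(Z\). Hence Lemma~\ref{lem:third-anchor-conflict}, with anchors \(S\) and \(R\), deletes at most \(3(e_Z+2)+3(e_Y+2)\) vertices per \(X\)-component. One palette of size \(R_X+3(e_Y+e_Z+4)\) then serves every \(X\)-component. The second \(Y\)-component loses a further \(3(e_X+2)\) to the second \(Z\)-component.

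Your bookkeeping must also use \(\ell=w-z\), not \(\ell\le w\). The total is \(w+2z+2m_0+3D+36\), where \(m_0\le D\) is the sum of the pairwise minima of the deficiencies. Only with the saved \(z\) does \(z\le\tfrac43D+6\) yield \(w+\tfrac{23}{3}D+48\). With \(\ell\le w\), the same reserves give only \(w+9D+54\).

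\textbf{Non-high case.} Here the proposal is more seriously incomplete, because no tail is complete to anything, so nothing confines conflicts. You take \(Q\) as the sole anchor and put every component of \(Y\) and \(Z\) on the \(Q\)-palette. Lemma~\ref{lem:third-anchor-conflict} bounds conflicts for one pair of components only. Nothing bounds how many conflicts a component of \(Y\) accumulates against arbitrarily many components of \(Z\). The non-high inequality controls only a sum of two second-largest orders. Even the shared reserve of size \(e_1+1\) is unsafe, since unmatched vertices of \(Y\) and of \(Z\) may be adjacent.

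The paper avoids conflicts altogether by a different anchoring. It takes the largest components \(R,S\) of the two \emph{least} deficient cells, with \(b=|R|\), \(c=|S|\), and \(p\) their nonedge-matching size. It keeps the \emph{most} deficient cell \(X\) whole.
\begin{itemize}
\item Shared colours on the \(R\)--\(S\) nonedge pairs, plus distinct colours on unmatched anchor vertices, use \(b+c-p\) colours.
\item In each \(X\)-component \(T\), vertices in triangle components of the complement on \(T\cup R\cup S\) reuse the pair colours.
\item The other vertices of \(T\) take at most \(3W-2(b+c)+p\) fresh colours by~\eqref{eq:bad-triangle-vertices}, from a palette shared by all \(X\)-components.
\end{itemize}
This part costs \(3W-b-c=w+e_1+e_2+6\). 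The remaining components of the two anchor cells get pure palettes whose sizes equal their second-largest orders. These sum to at most \(w+e_2+5\) because that pair is not high. Choosing the anchor pair from the two least deficient cells is what makes the non-high inequality produce \(e_2\) rather than \(e_3\).
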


\begin{proof}
First suppose \(Y,Z\) is high. Keep all components of \(X\),
but only the two largest components of \(Y,Z\), and trim each
kept component according to Lemma~\ref{lem:uniform-trimming}.
All surviving vertices now have common labels.
From each \(X\)-component further delete the vertices forming
same-label edges with the second \(Y\)-component or the
second \(Z\)-component. By Lemma~\ref{lem:third-anchor-conflict},
this costs at most \(3(e_Z+2)+3(e_Y+2)\) vertices per component.
From the second \(Y\)-component delete also its conflicts with
the second \(Z\)-component, at cost at most \(3(e_X+2)\).

The surviving vertices of each label are independent.
Pairs involving a largest anchor are nonedges by construction;
the other pairs have just been dealt with, and different components
of the same cell are anticomplete. Give them \(\ell\) label colours.
Three disjoint pure-cell palettes of sizes
\[
 p_X=R_X+3(e_Y+e_Z+4),\qquad
 p_Y=R_Y+3(e_X+2),\qquad
 p_Z=R_Z
\]
cover all deleted vertices. Their sizes bound deletions per
component, so they can be reused in the different components
of each cell.
They also cover the omitted \(Y,Z\) tails.
By Lemma~\ref{lem:high-pair-tails}, these need at most
\(e_Z+1,e_Y+1\) colours, and
Lemma~\ref{lem:common-labels} ensures that \(R_Y,R_Z\)
are at least those values.

Put
\(m_0=\min(e_X,e_Y)+\min(e_X,e_Z)+\min(e_Y,e_Z)\le D\).
The total cost is
\[
 \ell+p_X+p_Y+p_Z
 =w+2z+2m_0+3D+36
 \le w+\frac{23}{3}D+48.
\]

Now suppose no pair is high. Choose the largest anchors \(R,S\)
from the two least deficient cells, of deficiencies \(e_1,e_2\),
and let \(X\) be the remaining whole cell.
We can colour \(X\cup R\cup S\) with \(3W-b-c\) colours.
Indeed, give each pair in the \(R\)--\(S\) nonedge matching a distinct
colour, shared by its two endpoints, and colour the unmatched vertices
with distinct additional colours. This uses \(b+c-p\) colours.
For each component \(T\) of \(X\), consider the complement on
\(T\cup R\cup S\). The vertices of \(T\) lying in triangle components
reuse the colours assigned to the corresponding \(R\)--\(S\) pairs. The other vertices use at most
\(3W-2(b+c)+p\) fresh colours by
\eqref{eq:bad-triangle-vertices}.
Different \(X\)-components reuse that palette. The total is
\[
 3W-b-c=w+e_1+e_2+6.
\]
The other components of the two anchor cells use separate palettes
whose sizes equal the respective second-largest component orders. Since their pair is
not high, those orders sum to at most \(w+e_2+5\).
The total is \(2w+e_1+2e_2+11\), and
\(e_1+2e_2\le D\).
\end{proof}

\subsection{A scalar profile and the column sum}

\begin{lemma}\label{lem:strong-anchor}
Let two cluster graphs have largest and second-largest component orders
\(a\ge x,b\ge y\). If the largest components have a nonedge
matching of size \(M\) with \(a+b-M\le W\), then
\[
 \chi(X\cup Y)\le\max\{W,a+y,b+x\}.
\]
\end{lemma}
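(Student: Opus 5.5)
Let \(Q\subseteq X\) and \(R\subseteq Y\) be the largest components, and fix a nonedge matching \(\mathcal M\) between them with \(|\mathcal M|=M\) and \(a+b-M\le W\). The colouring follows the anchor idea of Lemma~\ref{lem:anchor-bound}, but it shares colours between the two anchors through the given matching. Every component other than \(Q,R\) is then coloured from palettes built out of the anchor palette together with a bounded amount of fresh colour.

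\emph{Anchor palette.} Give each matched pair of \(\mathcal M\) its own colour. Each unmatched vertex of \(Q\cup R\) gets a further distinct colour. This uses \(a+b-M\le W\) colours and is proper, because each matched pair is a nonedge and every other pair receives distinct colours. Call this palette \(P_0\), and pad it with unused colours to exactly \(\max\{W,a+y,b+x\}\) colours.

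\emph{The other components.} Every other component \(T\subseteq Y\) is anticomplete to \(R\), so it can reuse any colour appearing only on \(R\). It must avoid the colours of its neighbours in \(Q\). The colours of \(P_0\) that occur on \(R\) and not on \(Q\) are exactly the \(b-M\) colours of unmatched \(R\)-vertices. These are automatically safe for \(T\), and there are \(b-M\) of them. Beyond those, \(T\) needs \(|T|-(b-M)\le y-b+M\) further colours. These may be taken from the colours of \(P_0\) carried by vertices of \(Q\) that are nonadjacent to the corresponding vertex of \(T\); by the matching property, a vertex \(t\in T\) may reuse the colour of its unique nonneighbour in \(Q\), provided that colour is not simultaneously wanted elsewhere in \(T\). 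The remaining vertices of \(T\) use fresh colours, namely the padding colours. The padding supplies \(\max\{W,a+y,b+x\}-(a+b-M)\ge y-b+M\) colours when the maximum is \(a+y\), since \(a+y-(a+b-M)=y-b+M\). So the \(R\)-only colours together with the padding always cover \(T\), distinct components of \(Y\) reuse the same colours because they are mutually anticomplete, and \(X\)-side components are handled symmetrically using the \(Q\)-only colours and the padding \(\ge x-a+M\) supplied via \(b+x\).

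\emph{Main obstacle.} The difficulty is cross-edges between a non-anchor component \(T\subseteq Y\) and a non-anchor component \(T'\subseteq X\). Both draw on the padding colours, and \(T\) uses \(R\)-only colours while \(T'\) uses \(Q\)-only colours, which are disjoint sets. The conflict therefore lies only in the padding. My first attempt is to split the padding: \(Y\)-tails use the top padding colours in one order and \(X\)-tails use them in the reverse order, so overlap occurs only when the two demands sum to more than the padding. I would then bound that sum using \(|T|\le y\), \(|T'|\le x\) and \(a+b-M\le W\). If the count falls short, I would refine the step by letting \(T'\) reuse the \(Q\)-matched colours of \(\mathcal M\) on vertices of \(T'\) nonadjacent to the corresponding \(R\)-vertex (the matching condition again), which frees padding. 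The final count is the claimed \(\max\{W,a+y,b+x\}\).
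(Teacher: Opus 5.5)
Your proposal has a genuine gap at the step you flag as the ``main obstacle''. After giving every one of the $M$ matched pairs its own shared colour, you never show that the tails on the two sides can be coloured compatibly. In general they cannot. Take $a=b=x=y$ and $M=W=a$, with $X=Q\cup T'$ and $Y=R\cup T$, where all four components are cliques of order $a$. Let the $Q$--$R$ nonedges form a perfect matching, and make every other cross pair an edge. This graph even lies in $\Cclass$, with matching cross-nonedges. The target is $\max\{W,a+y,b+x\}=2a$. Each of your $a$ anchor colours sits on a vertex of $Q$ and a vertex of $R$. Since $T$ is complete to $Q$ and $T'$ is complete to $R$, none of these colours can be used on $T\cup T'$, which is a clique of order $2a$. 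So every extension of your anchor colouring needs $3a$ colours. Your fallback, letting $T'$ borrow matched colours where it misses the corresponding $R$-vertex, has nothing to work with: the lemma gives no control over nonedges between a tail and the opposite anchor.

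Nor is this an artefact of the absent clique cap. Take $a=b=10$, $x=y=8$, $M=6$, $W=14$. Let $T$ miss, via a matching, exactly the four unmatched vertices of $Q$; let $T'$ miss exactly the four unmatched vertices of $R$; and put two nonedges between $T$ and $T'$. Then $\omega(X\cup Y)=14$ and the target is $18$. The six shared colours are unusable on $T\cup T'$, which needs $14$ colours, so full sharing forces at least $20$.

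The missing idea is to share \emph{fewer} pairs. The paper gives shared colours to only $t=\min\{M,a-x,b-y\}$ matched pairs. The remaining $a-t\ge x$ vertices of $Q$ and $b-t\ge y$ vertices of $R$ get two disjoint pure palettes. Every other component reuses the pure palette of its own side, since it is anticomplete to that side's anchor and to the other components there. The two pure palettes are disjoint from each other and from the $t$ shared colours. Hence all cross-edges, tail-to-tail included, are respected without ever inspecting nonedges outside the anchor matching. The count is $a+b-t=\max\{a+b-M,\,b+x,\,a+y\}\le\max\{W,a+y,b+x\}$. Giving up the last $M-t$ shared pairs is exactly what gives each side's palette enough room for its tails. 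In the two examples above this yields $2a$ and $18$ colours respectively.
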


\begin{proof}
Give \(t=\min\{M,a-x,b-y\}\) nonedge pairs between the largest
components distinct shared colours. Their remaining vertices use
disjoint pure palettes of sizes \(a-t,b-t\).
These are at least \(x,y\), so all other components reuse their
own side's pure palette. The total is
\[
 a+b-t=\max\{a+b-M,b+x,a+y\}\le\max\{W,b+x,a+y\}.
\]
\end{proof}

Consider a same-column pair of width \(w\), with deficiencies
\(e,f\), second-largest component orders \(x,y\), and largest orders
\(a=w-e,b=w-f\). Define their \emph{scalar profiles} by
\[
 t_X=e+x,\qquad t_Y=f+y.
\]
Both scalar profiles lie in \([0,w]\).
If the pair is high, Lemma~\ref{lem:high-pair-tails} supplies
the tail capacities in Lemma~\ref{lem:small-tail-reward}, with
pair cap \(W=w+1\). Consequently
\begin{equation}\label{eq:high-pair-cost}
 \chi(X\cup Y)\le
 \frac32w-\frac{e+f}{12}+\frac{16}{3}.
\end{equation}
If the pair is not high, put \(D=e+f\), \(q=\max\{e,f\}\)
and \(\Delta=|t_X-t_Y|\). Then \(x+y\le w+q+5\).
Lemma~\ref{lem:strong-anchor} gives
\begin{align}
 \chi(X\cup Y)
 &\le\max\{w+1,w+\max(t_X,t_Y)-D\}\notag\\
 &\le\max\left\{w+1,
 \frac32w-\frac12\min(e,f)+\frac12\Delta+\frac52\right\}.
 \label{eq:low-pair-cost}
\end{align}

\begin{lemma}\label{lem:refined-column}
Every column with \(n\) cells of width \(w\ge2\) can be
coloured with at most
\[
 n\lambda(w)+\frac{71}{93}w,\qquad
 \lambda(w)=\frac{208}{279}w+\frac{784}{279}
\]
colours.
\end{lemma}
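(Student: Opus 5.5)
The plan is a charging argument. Every cell of the column is assigned to exactly one of three kinds of group: a triple, coloured by Proposition~\ref{prop:three-cell}; a same-column pair, coloured by \eqref{eq:high-pair-cost} or \eqref{eq:low-pair-cost}; or a singleton, coloured with its own palette of $w-e$ colours, where $e$ is its deficiency. Distinct groups receive disjoint palettes. The goal is to show that each group of size $g$ costs at most $g\lambda(w)$ plus a small additive error. The errors are chosen so that, summed over the whole column, they total at most $\tfrac{71}{93}w$. This covers a telescoping sum, at most one or two leftover cells that cannot be grouped, and the absolute constants $11,16/3,48$ from the group bounds.

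\textbf{Step 1: classify cells by deficiency.} Fix thresholds $0<\tau_1<\tau_2$ as fractions of $w$, to be optimised at the end.
\begin{itemize}
\item A cell with $e\ge\tau_2 w$ is a singleton. Its cost $w-e\le(1-\tau_2)w$ is at most $\lambda(w)$ once $\tau_2\ge 71/279$.
\item Cells with $e<\tau_1 w$ are grouped into triples, taken three at a time in order of increasing deficiency, so that $D=e_1+e_2+e_3<3\tau_1 w$.
\item In the non-high case, Proposition~\ref{prop:three-cell} gives $2w+D+11$.
\item In the high case it gives $w+\tfrac{23}{3}D+48$.
\item Both must be at most $3\lambda(w)=\tfrac{624}{279}w+\tfrac{784}{93}$, up to a bounded error. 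For the high case this requires roughly $D\le\tfrac{15}{115}\cdot\tfrac{345}{279}\,w$, which fixes $\tau_1$.
\end{itemize}
Up to two leftover low-deficiency cells are moved to the middle class, or charged to the additive term at cost at most $w$ each. Because that charge is not small, I would instead arrange the grouping so that leftovers pair with middle cells.

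\textbf{Step 2: pair the middle cells} ($\tau_1 w\le e<\tau_2 w$).
\begin{itemize}
\item Sort the middle cells by scalar profile $t=e+x$ and pair consecutive cells in this order. The differences $\Delta$ then telescope, $\sum\Delta\le\max t-\min t\le w$, and the term $\tfrac12\Delta$ in \eqref{eq:low-pair-cost} contributes at most $w/2$ in total to the additive term.
\item For each pair, the remaining cost is $\max\{w+1,\tfrac32w-\tfrac12\min(e,f)+\tfrac52\}$ in the low case and $\tfrac32w-\tfrac{e+f}{12}+\tfrac{16}{3}$ in the high case.
\item Since $\min(e,f)\ge\tau_1 w$, both are at most $2\lambda(w)$ provided $\tfrac32-\tfrac{\tau_1}{12}\le\tfrac{416}{279}$, and also $\tfrac32-\tfrac{\tau_1}{2}\le\tfrac{416}{279}$, which is weaker.
\item The first condition is $\tau_1\ge \tfrac{12\cdot 5.5}{279}$ roughly. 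This is where the coefficient $208/279$ comes from: it is the value at which the high-pair requirement on $\tau_1$ and the high-triple requirement on $\tau_1$ coincide.
\end{itemize}
The constants $784/279$ in $\lambda$ are meant to absorb the per-group absolute constants $48/3$, $16/3\cdot\tfrac12$, and so on.

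\textbf{Step 3: balance the constants.} I would solve the small linear system in $(\lambda,\tau_1,\tau_2)$ that makes the three binding constraints (high triple, high pair, singleton) tight, and check that it reproduces $\lambda=\tfrac{208}{279}w$. I would then verify the lower-order constants, $784/279$ and the additive $\tfrac{71}{93}w\approx\tfrac12w+0.26w$. The $0.26w$ is meant to pay for a bounded number of unmatched cells, each costing at most $w-\lambda(w)$ beyond its share.

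\textbf{Main obstacle.} The hard part is the interaction between the two sortings. Triples are chosen by deficiency, while pairs are chosen by scalar profile. A parity leftover in either class must be absorbed without exceeding $\tfrac{71}{93}w$. I would first try to place every leftover cell into the pair class, accepting the cost of low $\min(e,f)$ in that pair. If this fails for very small deficiencies, I would use the non-high triple bound $2w+e_1+2e_2+11$ with one slightly larger deficiency instead. Checking small $w$, where the constants dominate, may require separate treatment using $\lambda(w)\ge w$ for $w\le 11$.
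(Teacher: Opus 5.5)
Your architecture matches the paper's: cells of small deficiency go into triples via Proposition~\ref{prop:three-cell}, and the rest are paired consecutively by scalar profile, so the \(\tfrac12\Delta\) terms telescope to at most \(w/2\). You also correctly locate \(208/279\) as the point where the high-triple and high-pair constraints on the threshold meet at \(5/93\). In Step~2 the condition should read \(\tfrac32-\tfrac{\tau_1}{6}\le\tfrac{416}{279}\), since \(e+f\ge2\tau_1w\); this gives exactly \(\tau_1\ge5/93\). The singleton class is harmless but unnecessary.

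\textbf{First gap: the constant term.} There are \(\Theta(n)\) groups and \(n\) is independent of \(w\), so the constants \(48,16/3,11\) cannot be charged to \(\tfrac{71}{93}w\). They also cannot be absorbed by \(c_0=784/279\), which is far below the per-cell constant \(48/3\) of a high triple. With a threshold that is a fixed fraction of \(w\), every high triple exceeds \(3\lambda(w)\) by about \(48-784/93\approx39.6\). The missing step is an affine threshold, \(\tau(w)=(5w-160)/93\). It is the unique choice with \(w+23\tau(w)+48=3\lambda(w)\) and \(\tfrac32w-\tfrac16\tau(w)+\tfrac{16}3=2\lambda(w)\). So your linear system must be solved for the constant terms as well. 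With this threshold, every full triple and every high pair costs at most its exact share, and a non-high pair costs at most \(2\lambda(w)+\tfrac12\Delta\).

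\textbf{Second gap: the leftovers.} There can be two from the triples and one from the pair parity, and only \(\tfrac{71}{93}w-\tfrac w2=\tfrac{49}{186}w\) remains after telescoping. A separately coloured cell exceeds its share by up to \(w-\lambda(w)=\tfrac{71w-784}{279}\), so your stated accounting absorbs only one of them.

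The paper pools the at most three leftovers and colours two of them jointly by Theorem~\ref{thm:universal-pair} with \(W=w+1\), at cost at most \(\tfrac32w+\tfrac72\). The excess is then at most \(\tfrac52w+\tfrac72-3\lambda(w)\le\tfrac{49}{186}w\).

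Your fallback of moving low leftovers into the profile-sorted pair class can also be completed, but this has to be checked. By \eqref{eq:high-pair-cost} and \eqref{eq:low-pair-cost}, a pair containing a cell of deficiency below \(\tau(w)\) costs at most \(2\lambda(w)+\tfrac12\Delta+\tfrac16\tau(w)\). Leave out the last cell in sorted order. The worst case is then a low leftover together with one low cell inside a pair, with excess \((w-\lambda(w))+\tfrac16\tau(w)=\tfrac{147w-1728}{558}<\tfrac{49}{186}w\). Neither argument appears in your proposal.
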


\begin{proof}
Set
\[
 \tau(w)=\frac{5w-160}{93},\qquad
 c=\frac{208}{279},\qquad c_0=\frac{784}{279},
\]
so \(\lambda(w)=cw+c_0\).
Group cells of deficiency less than \(\tau(w)\) into triples,
leaving at most two. If \(\tau(w)\le0\), there are no such
cells. In each triple the total deficiency is at most \(3\tau(w)\).
If a triple contains a high pair, Proposition~\ref{prop:three-cell}
gives a colouring with at most
\[
 w+23\tau(w)+48=3\lambda(w)
\]
colours. If no pair is high, the bound is \(2w+3\tau(w)+11\),
which is smaller by \((7w+241)/93>0\).

Next consider the cells of deficiency at least \(\tau(w)\), setting
aside the at most two ungrouped cells from the preceding step.
Sort these cells by the scalar profile \(t\) defined above.
Pair consecutive cells, leaving at most one.
For a high pair, \eqref{eq:high-pair-cost} gives cost at most
\[
 \frac32w-\frac{\tau(w)}6+\frac{16}{3}=2\lambda(w).
\]
For a non-high pair, \eqref{eq:low-pair-cost} gives cost
at most \(2\lambda(w)+|t_X-t_Y|/2\).
The quantity \(2\lambda(w)+\Delta/2\) exceeds the second term in
the maximum by at least \(\tau(w)/3+17/6>0\), since
\(\tau(w)\ge-160/93>-2\).
The other term is smaller because \(2\lambda(w)>w+1\).
The intervals between consecutive paired profiles have disjoint
interiors in \([0,w]\), so their total length is at most \(w\).
The total number of colours used by these triples and pairs is at
most \(\lambda(w)\) times the number of cells they contain, plus \(w/2\).

Pool the leftover cells, of which there are at most three.
For zero, one, two or three leftovers, use respectively
\[
 0,\quad w,\quad \frac32w+\frac72,\quad
 \frac52w+\frac72
\]
colours, using Theorem~\ref{thm:universal-pair} for the pairs.
After subtracting the corresponding multiples of \(\lambda(w)\),
the coefficients of \(w\) are
\[
 0,\quad\frac{71}{279},\quad\frac5{558},\quad\frac{49}{186},
\]
and the constant terms are
\[
 0,\quad-\frac{784}{279},\quad-\frac{1183}{558},
 \quad-\frac{917}{186}.
\]
The excess over the corresponding multiples of \(\lambda(w)\) is
therefore at most \(49w/186\). Adding the \(w/2\) contribution from
the differences between paired profiles gives
\((1/2+49/186)w=71w/93\).
All blocks use mutually disjoint palettes.
\end{proof}

\begin{proof}[Proof of the bound in Theorem~\ref{thm:refined}]
For column \(j\), put \(n=j-1,w=k-j+2\).
The required sums are
\[
 \sum_{j=2}^{k}nw=\frac{k(k-1)(k+4)}6,\qquad
 \sum_{j=2}^{k}n=\frac{k(k-1)}2,\qquad
 \sum_{j=2}^{k}w=\frac{k(k+1)}2-1.
\]
Apply Lemma~\ref{lem:refined-column} to each column and
add \(k\) colours for the independent sets \(\{a_i\}\cup I_i\).
The total is at most
\begin{align*}
 k
 &+\frac{208}{279}\frac{k(k-1)(k+4)}6
 +\frac{784}{279}\frac{k(k-1)}2
 +\frac{71}{93}\left(\frac{k(k+1)}2-1\right)\\
 &=\frac{208k^3+3615k^2-871k-1278}{1674}.
\end{align*}
\end{proof}

\subsection{Comparison of the explicit formulas}

\begin{proof}[Proof of the comparison in Theorem~\ref{thm:refined}]
Recall \(E_8(k)=k^3/8+5k^2/4-4k+8\).
Direct expansion gives
\[
 6696\bigl(E_8(k)-\Bexp(k)\bigr)
 =P_e(k):=5k^3-6090k^2-23300k+58680.
\]
For odd \(k\), the corresponding expression is
\[
 6696\bigl(\Ueight(k)-\Bexp(k)\bigr)
 =P_o(k):=5k^3-6090k^2-24137k+63702.
\]
Each derivative is an upward-opening quadratic with negative
constant term, and so has one positive and one negative root.
Each polynomial first decreases and then increases on the positive
axis; neither has an interior maximum there.
Their endpoint values are
\[
 \begin{array}{c|r|c|r}
 k&P_e(k)&k&P_o(k)\\ \hline
 8&-514920&9&-643176\\
 1220&-13483320&1221&-7044960\\
 1222&1451760&1223&7937376
 \end{array}
\]
Thus \(P_e<0\) on \([8,1220]\) and \(P_o<0\)
on \([9,1221]\).
Both derivatives are positive at \(1222\) and increase
thereafter, since their common second derivative is
\(30k-12180>0\).
It follows that \(P_e>0\) for \(k\ge1222\) and
\(P_o>0\) for \(k\ge1223\). These ranges cover all even and
odd integers \(k\ge8\).
\end{proof}

\section*{Declarations}

\noindent\textbf{Funding.}
No specific grant supported this research.

\medskip
\noindent\textbf{Competing interests.}
The author has no competing interests to declare.

\medskip

\noindent\textbf{Use of artificial intelligence.}
Literature searches, language polishing, and manuscript preparation were assisted by OpenAI Codex. The author takes full responsibility for the
contents of the article.

\end{document}